\documentclass[a4paper, 11pt,reqno]{amsart}
\usepackage[english]{babel}
\usepackage[latin1]{inputenc}
\usepackage{eucal}
\usepackage{mathrsfs,amsmath,amssymb,latexsym,amsthm,amsfonts,mathtools}
\usepackage[all,cmtip]{xy}
\usepackage[usenames,dvipsnames,svgnames,table]{xcolor}
\usepackage{enumitem}
\usepackage[plainpages=false,colorlinks,pdfpagelabels]{hyperref}

\usepackage[hmargin=3.0cm,vmargin=3.5cm]{geometry}

\title{
  Relations between different types of Hypoellipticity: A systematic approach
}

\author{Bruno de Lessa Victor}
\address{Universidade Federal de Santa Catarina, Brazil}
\email{\texttt{bruno.lessa@ufsc.br}}

\author{Luis F.~Ragognette}
\address{Universidade Federal de Minas Gerais, Brazil}
\email{\texttt{luisragognette@mat.ufmg.br}}

\thanks{Supported   by the
  Conselho Nacional de Desenvolvimento Cient{\'i}fico e Tecnol{\'o}gico
  (CNPq, grants~404175/2023-6).
}

\keywords{Local hypoellipticity, Regularity of solutions, Local solvability} 
\subjclass[2020]{35H10 (primary), 35B65, 35A09 (secondary)}


\date{}
 
\theoremstyle{definition} 

\newtheorem{Def}{Definition}[section]

\theoremstyle{definition}
\newtheorem{Exe}[Def]{Example}
\newtheorem{Obs}[Def]{Remark}

\theoremstyle{plain}
\newtheorem{Pro}[Def]{Proposition}
\newtheorem{Cor}[Def]{Corollary}
\newtheorem{Teo}[Def]{Theorem}
\newtheorem{Lem}[Def]{Lemma}


\newcommand{\ord}{\mathrm{ord}}

\newcommand{\Span}{\mathrm{span}}


\newcommand{\transp}[1]{\prescript{\mathrm{t} \!}{}{{#1}}}

\newcommand{\XX}{\mathrm{X}}
\newcommand{\LL}{\mathrm{L}}
\newcommand{\MM}{\mathrm{M}}

\newcommand{\loc}{\mathrm{loc}}

\newcommand{\RN}{\R^N}

\newcommand{\Rm}{\R^m}

\newcommand{\Char}{\mathrm{Char}}

\newcommand{\supp}{\mathrm{supp}\,}

\newcommand{\R}{\mathbb{R}}
\newcommand{\N}{\mathbb{N}}

\newcommand{\Rn}{\R^{n}}
\newcommand{\C}{\mathbb{C}}

\newcommand{\D}{\mathscr{D}}
\newcommand{\E}{\mathscr{E}}

\newcommand{\Z}{\mathbb{Z}}

\newcommand{\Cinf}{\mathcal{C}^{\infty}}
\newcommand{\Com}{\mathcal{C}^{\omega}}

\newcommand{\lra}{\longrightarrow}

\newcommand{\del}{\partial}

\renewcommand{\Re}{\mathrm{Re }}
\renewcommand{\Im}{\mathrm{Im }}
\newcommand{\dd}{\textnormal{d}}
\newcommand{\T}{\mathrm{T}'}

\newcommand{\hypo}{\mathfrak{h}}
\newcommand{\dfn}{\doteq}
\newcommand{\LLambda}{\underline{\Lambda}}
\allowdisplaybreaks

\begin{document}

\maketitle
\pagestyle{plain}

\begin{abstract}

We give a systematic treatment to the concept of hypoellipticity, putting it into an abstract form which allows us to deal with several different notions within the same framework. We then investigate when a notion of hypoellipticity implies another one and, in particular, when it can be extended for more general spaces. We also present a relation between certain types of hypoellipticity and local solvability (for the transpose) for a family of operators.

\end{abstract}

\section{Introduction}

The notion of hypoellipticity is central in the study of regularity of differential operators. Highly impactful works related to the concept, such as \cite{batr2},  \cite{hor2}, \cite{hn},  \cite{k3}, \cite{mal} and \cite{rs}, display its importance for several different areas: Partial Differential Equations, Harmonic Analysis, Complex Analysis, Stochastic Analysis, Statistical Mechanics, among others. 
In fact many research papers regarding the hypoellipticity of some models of differential operators have been moving the theory forward.

Usually, one wants to find conditions which guarantee a certain type of hypoellipticity on the operator. Our approach here is slightly different; we systematically treat several different notions of hypoellipticity and look for  answers for the following questions:
\begin{itemize}
	\item When does a type of hypoellipticity imply another type?
	\item When is it possible to prove an equivalence between different types of hypoellipticity? When does this sort of equivalence fail?
	\item How deep are the relations between  hypoellipticity and local solvability?
\end{itemize}

These questions are inspired by famous results in the literature. For instance, for constant coefficient operators it is well-known that hypoellipticity (in the smooth sense) implies the existence of an $s_0\geq 1$ such that the operator is also Gevrey-hypoelliptic of order $s$ for every $s\geq s_0$. Furthermore, if the operator is Gevrey-hypoelliptic of order $s$, for some $s >1$, then it is hypoelliptic in the smooth sense. 

Another classical example derives from  operators of given by ``\emph{real sum of squares}'' satisfying \emph{Hörmander's condition} \cite{hor2}. Thanks to Hörmander's remarkable theorem we know that any operator of such type is hypoelliptic. Beyond that, if its coefficients are real-analytic, a result by Derridj and Zuily~\cite{dz} proves the existence of $s_0 \geq 1$ such that if $s\geq s_0$, it is also Gevrey-hypoelliptic of order $s$. On the other side, Baouendi and Goulaouic presented in \cite{bg} an example of an operator with real-analytic coefficients and satisfying Hörmander's condition which is not analytic-hypoelliptic.  In fact, an important unsolved problem in the theory is a complete characterization of analytic-hypoellipticity for this class of second-order differential operators.

It is worth mentioning that for all the results above, hypoellipticity is studied with the starting point of \emph{distributions}. Our work was strongly motivated by results of Cordaro and Hanges in \cite{ch1};  there the authors study the relation between analytic-hypoellipticity with respect to distributions with the one in the setting of hyperfunctions. One of their results which attracted us the most is the following: let $P$ be a partial differential operator with constant coefficients; then $P$ is \emph{elliptic} if and only if it satisfies the following property:
\begin{align*}
&\text{\emph{the only hyperfunctions that $P$ sends in the space}} \\
&\text{\emph{of smooth functions are already smooth functions},}
\end{align*}
which is quite different in comparison with the case of distributions, for instance, the \emph{heat operator} is a famous example of a hypoelliptic operator that is not elliptic. Furthermore, this result shows that for a generic differential operator with real-analytic coefficients one cannot expect to extend hypoellipticity from distributions up to hyperfunctions.

An unusual and interesting case is considered in \cite{gz}, where the authors study analytic-hypoellipticity by understanding how the Keldysh operator acts on \emph{smooth functions}, instead of distributions or hyperfunctions. In this context, it seems natural to ask when these three notions are equivalent. Generally speaking, this type of question can be made in a much broader sense, taking into consideration  \emph{different classes of differential operators acting on several spaces of functions and generalized functions}. 

This is the main theme of this work, which is organized as it follows: in Section~\ref{section2} we provide basic definitions to deal with hypoellipticity in an abstract manner. In Section~\ref{constant coefficients operators} we prove, in the context of differential operators with constant coefficients, the equivalence of different notions of hypoellipticity in the Gevrey framework. In particular, we provide in Theorem~\ref{teo1} a Gevrey version for the aforementioned result proved in \cite{ch1}. Among other things, our result says the following: if a constant coefficient operator $P$ fails to be Gevrey-hypoelliptic of order $s$,  
\begin{itemize}
	\item for every $t>s$ there exists $g$ a Gevrey function of order $t$ but not of order $s$ such that $Pg$ is a Gevrey function of order $s$.
\end{itemize}

In Section~\ref{section4} we consider operators that commute with an elliptic operator; under this condition we investigate hypoellipticity regarding Sobolev spaces. As a consequence Corollary~\ref{operators not hypoelliptic}, one has the following: if $P$ is such an operator which fails to be hypoelliptic,  one has the following properties:
\begin{itemize}
	\item for every $k\in \N$ there exists a non-smooth function $g$ of class $\mathcal{C}^{k}$ such that $Pg$ is a smooth function;
	\item for every $r\in \R$ there exists a distribution $u$ which does not belong to $H_{\textrm{loc}}^{r}$ such that $Pu$ is a smooth function.
\end{itemize}
It is worth mentioning that an important example is the celebrated Lewy's vector field, presented first in \cite{l}. 

In Section~\ref{section5} we explore the idea of extending hypoellipticity; first we recall some results by Komatsu~\cite{kom2} in one dimension, where we have a class of operators which are analytic-hypoelliptic when the starting point is a certain space of Gevrey functions of order $s$, but it fails to be analytic hypoelliptic if the starting point is replaced by another space of Gevrey functions of a certain order $t$, with $t > s$. Next we  prove  a result in an opposite direction, for a certain model of operators  which are named \emph{of tube type}. If $P$ is a differential operator of such type, we prove the following:
\begin{itemize}
	\item Let $1<s<r$; in order to check whether $P$ is Gevrey-hypoelliptic of order $s$ with respect to ultradistributions of order $s$, it is sufficient to verify that $P$ does not sends any Gevrey function of order $r$ into a function of order $s$, unless it was previously of order $s$.
\end{itemize}  
Even though the result is stated and proved in the Gevrey context, one can adapt the result to analytic case as well, as pointed out in Remark~\ref{RA case}. An important example in our model is the previously mentioned Baouendi-Goulaouic operator in $\R^{3}$, given by
\begin{equation*}
	Q= \frac{\del^{2}}{\del t^{2}} + t^{2} \frac{\del^{2}}{\del x_{1}^{2}} + \frac{\del^{2}}{\del x_{2}^{2}}.
\end{equation*}
It was proved for instance by Christ in \cite{c} and by Bove and Tartakoff in \cite{bt} that $Q$ is Gevrey-hypoelliptic of order $s$ with respect to distributions if and only if $s \geq 2$. Hence  Theorem~\ref{Gevrey hypo implies ultra hypo} implies that
\begin{itemize}
\item for every $s>1$, it is always possible to find an open set $U \subset \R^{3}$ and $f \in G^s(U) \setminus \Com(U)$ such that $Qf \in \Com(U)$;
\item for every $s \geq 2$, let  $u$ be an ultradistribution  of order $s$  for which $Pu$ is a Gevrey function of order $s$. Then $u$ is actually a Gevrey function of order $s$.  
\end{itemize}

In Section~\ref{section6}, we study in a model of operator that was previously studied in \cite{ch1,ch2}. We prove in Theorem~\ref{weak hypo} that for any such operator which is Gevrey-hypoelliptic of order $s$ with respect to distributions, one has different notions of hypoellipticity for its \emph{transpose}. In particular, our result can be applied for operators given by sum of squares of real vector fields that satisfy Hörmander's condition and have real-analytic coefficients.  In the particular case of the Baouendi-Goulaouic operator, a combination of Theorem~\ref{weak hypo} and \cite[Corollary $1.1$]{ch2} implies that
\begin{itemize}
	\item $Q$ is hypoelliptic with the starting point given by ultradistributions of order $s$ \emph{if and only if $s \geq 2$}.
\end{itemize}

  Hypoellipticity for system of vector fields is considered in Section~\ref{LIS Gevrey}. More precisely, we prove an equivalence between different notions of Gevrey hypoellipticity for systems of vector fields associated to a locally integrable structure. Among the operators for which this result holds true we have the Mizohata structures in general. 
  
It is well established that the standard hypoellipticity of an operator implies local solvability of its transpose. In Section~\ref{section8}, we prove in Theorem~\ref{solution delta type} that different notions of hypoellipticity imply local solvability, in the smooth sense, for the transpose operator.
 
Finally, in the Appendix we discuss some results on infinite order differential operators that are important in the Gevrey/analytic setting. In particular they allow us to represent ultradistributions/hyperfunctions as an infinite order operator applied to a smooth function. Furthermore, a great part of them have a notion of strong ellipticity, which guarantees that any element in their kernel are in fact real-analytic functions. Not only these results are useful by themselves, but they are also indispensable as technical tools for proofs in Sections~\ref{constant coefficients operators}, \ref{section5} and \ref{LIS Gevrey}.

\section{Hypoellipticity in an abstract setup}\label{section2}


\subsection{Hypoellipticity of pairs}

Let us start defining  hypoellipticity in an abstract manner. Let $\mathcal{E}$ be a sheaf of topological vector spaces over $\Omega$ and consider $P$ a morphism of $\mathcal{E}$, meaning that
\begin{equation*}
\text{$P: \mathcal{E}(U) \lra \mathcal{E}(U)$ is well defined for every $U$ open subset of $\Omega$,}
\end{equation*}
and if $\iota_{UV}: \mathcal{E}(U) \lra \mathcal{E}(V)$ is the restriction from an open set $U$ to an open subset $V$, then $P\circ \iota_{UV}= \iota_{UV} \circ P$.
Let $\mathcal{F}$ and $\mathcal{G}$ be two subsheaves of $\mathcal{E}$ and assume that $\mathcal{G}$ is a subsheaf of $\mathcal{F}$.

\begin{Def}
 The {\it hypoellipticity of $P$ with respect to the pair $(\mathcal{F},\mathcal{G})$} is described as it follows: for every $U$ open subset of $\Omega$, $P$ satisfies the following property:
\begin{align}\label{hipodepravado}
  \textrm{ whenever  $u\in \mathcal{F}(U)$ satisfies $Pu\in \mathcal{G}(U)$, then } u\in \mathcal{G}(U).
\end{align}
When $P$ is hypoellipticity  with respect to the pair $(\mathcal{F},\mathcal{G})$ we denote that $P$ is $\mathfrak{h}(\mathcal{F},\mathcal{G})$
\end{Def}

Since we are interested in partial differential equations, we will always assume that the sheaves are subsheaves of the \emph{sheaf of hyperfunctions} (our approach to hyperfunctions follow \cite{tre}, see also \cite{ct2}, \cite{sch}) and the morphisms in question are given by the action of partial differential operators, whose coefficients are sufficiently regular so each sheaf in the pair is preserved.
More precisely, we consider the following sheaves on $\R^{n}$:
\begin{itemize}
	\item The \emph{sheaf of real-analytic functions}, which will be denoted by $\Com$, and the \emph{sheaf of hyperfunctions}, here denoted by  $\mathcal{B}$; 
\item The  \emph{sheaves of smooth functions, localized Sobolev spaces of order $k$ and  distributions}, which will be denoted respectively by $\Cinf$, $H_{\textrm{loc}}^{k}$ and  $\D'$;
	\item The \emph{sheaves of Gevrey functions and ultradistributions of Roumieu type order $s$} for any $s > 1$, which will be denoted respectively by $G^{s}$ and $\D_{s}'$;
	\item The \emph{sheaves of Gevrey functions and ultradistributions of Beurling type of order $s$} for any $s > 1$, which will be denoted respectively by $G^{(s)}$ and $\D_{(s)}'$.
\end{itemize} 

In this context, to say that $P$ is hypoelliptic in the classical sense is equivalent to state that  $P$ is  $\hypo(\D', \Cinf)$. Similarly, to say  that $P$ is  $\hypo(\D', \Com)$  is the same as to state that $P$ is analytic-hypoelliptic in the classical sense. 

A simple observation is that if hypoellipticity fails, the elliptic regularity theorem implies it fails on the characteristic set of the differential operator. Moreover, let $P$ be a differential operator with real-analytic coefficients on  an open subset $U$  of $\Rn$; if $\mathcal{S}$ is any subsheaf of $\D'$ which contains $\Com$ as a subsheaf,  it was proved in \cite{hr} that
\begin{align*}
  WF_{\mathcal{S}}(u) \subset WF_{\mathcal{S}}(Pu) \cup \Char P, \quad \forall u \in \D'(U)
\end{align*}
if and only if
\begin{align*}
  WF_{\mathcal{S}}(u) \subset WF_{\mathcal{S}}(Pu) \cup \Char P, \quad \forall u \in \mathcal{B}(U).
\end{align*}
Hence if $P$ is $\hypo(\mathcal{B}, \mathcal{S})$, it is also $\hypo(\D', \mathcal{S})$ and the obstruction for the converse happens on $\Char~P$.
In particular, this result implies that if $P$ is elliptic (i.e., $\Char~P=\emptyset$), the properties $\hypo(\mathcal{B}, \mathcal{S})$ and $\hypo(\D', \mathcal{S})$ are equivalent.

In general one may expect the following: if $P$ is an operator whose coefficients are \emph{sufficiently regular} any type of the obstruction to navigate between different notions of hypoellipticity will happen on the characteristic set.

\begin{Obs}\label{RemarkHypoLocal}
One thing to keep in mind is that the problem is \emph{local} in the following sense: given $u \in \mathcal{F}(U)$ satisfying $Pu \in \mathcal{G}(U)$ is enough to find a cover of $\{V_k\}$ of $U$ and check that $u|_{V_k} \in \mathcal{G}(V_k)$ for every $k$. 
\end{Obs}

Certain arguments are pretty general and for this reason it is convenient to approach them with the abstract setup just stated. For instance,  consider $\mathcal{F}, \mathcal{G}, \mathcal{H}$ sheaves such that $\mathcal{H}\subset \mathcal{G}\subset \mathcal{F}$, where each inclusion means \emph{that one is a subsheaf of the other}. It is not difficult to check that we have the following statement. 

\begin{Pro}
 Let $\mathcal{F}, \mathcal{G}$ and $\mathcal{H}$ be sheaves and $P$ an operator as above. We have the following relations:  
  \begin{enumerate}[leftmargin=*]
  \item If $P$ is $\hypo(\mathcal{F},\mathcal{G})$ and $\hypo(\mathcal{G}, \mathcal{H})$ then $P$ is $\hypo(\mathcal{F},\mathcal{H})$; \label{cond1}
  \item If $P$ is $\hypo(\mathcal{F}, \mathcal{H})$ then $P$ is $\hypo(\mathcal{G}, \mathcal{H})$. \label{cond2}
  \end{enumerate}
\end{Pro}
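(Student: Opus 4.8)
The plan is to verify both statements directly from the definition of $\hypo(\cdot,\cdot)$, exploiting only the fact that each sheaf in the chain $\mathcal{H} \subset \mathcal{G} \subset \mathcal{F}$ is a subsheaf of the next, so that the sections $\mathcal{H}(U) \subset \mathcal{G}(U) \subset \mathcal{F}(U)$ sit inside one another for every open $U$, and that $P$ is a morphism compatible with restrictions. No serious obstacle is expected here; the argument is a pure diagram chase with the inclusions.

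For part~\eqref{cond1}, I would fix an open set $U \subset \Omega$ and take $u \in \mathcal{F}(U)$ with $Pu \in \mathcal{H}(U)$. Since $\mathcal{H}(U) \subset \mathcal{G}(U)$, we in particular have $Pu \in \mathcal{G}(U)$; as $u \in \mathcal{F}(U)$ and $P$ is $\hypo(\mathcal{F},\mathcal{G})$, property~\eqref{hipodepravado} yields $u \in \mathcal{G}(U)$. Now $u \in \mathcal{G}(U)$ and $Pu \in \mathcal{H}(U)$, so applying $\hypo(\mathcal{G},\mathcal{H})$ gives $u \in \mathcal{H}(U)$. Since $U$ was arbitrary, $P$ is $\hypo(\mathcal{F},\mathcal{H})$.

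For part~\eqref{cond2}, again fix an open $U$ and take $u \in \mathcal{G}(U)$ with $Pu \in \mathcal{H}(U)$. Because $\mathcal{G}(U) \subset \mathcal{F}(U)$, we have $u \in \mathcal{F}(U)$, and $\hypo(\mathcal{F},\mathcal{H})$ applied to this $u$ forces $u \in \mathcal{H}(U)$; hence $P$ is $\hypo(\mathcal{G},\mathcal{H})$. The only point requiring a word of care is that these deductions use $Pu$ computed in $\mathcal{E}(U)$ and the coherence of $P$ with the sheaf inclusions, but this is exactly the standing assumption that $P$ is a morphism of $\mathcal{E}$ preserving each subsheaf, so no localization via Remark~\ref{RemarkHypoLocal} is even needed.
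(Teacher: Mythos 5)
Your proof is correct and is exactly the routine unfolding of the definitions that the paper has in mind when it says ``It is not difficult to check''; the paper does not supply an explicit argument, and yours fills it in as one would expect. Nothing further is needed.
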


\subsection {Hypoellipticity of triples}

Sometimes it may be interesting to consider a notion of hypoellipticity which takes into consideration three sheaves. Consider for instance the following result obtained by J.J. Kohn in  \cite{k}: for each integer $k> 1$ the author constructs a differential operator $E_{k}$ which satisfies the following condition:
\begin{equation*}
\forall U \subset \subset \R^{3}, \ \ \forall u \in \D'(U), \ \ E_{k} u \in L_{\textrm{loc}}^{2}(U) \ \text{\emph{implies}} \ u \in H_{\textrm{loc}}^{1 - k}(U).
\end{equation*}

Observe that this property cannot be described as a hypoellipticity of pairs, since  $H_{\textrm{loc}}^{1-k}(U) \subsetneq L_{\textrm{loc}}^{2}(U)$ whenever $k > 1$. This type of example leads us to an abstract concept of \emph{hypoellipticity of triples}.

\begin{Def}
  Let $\mathcal{E}$ be a sheaf of topological vector spaces and $P$ a morphism of $\mathcal{E}$.
Consider three sheaves $\mathcal{H}\subset \mathcal{G}\subset \mathcal{F}$ over an open subset $\Omega$ of $\RN$. We shall say that $P$ is $\hypo(\mathcal{F}, \mathcal{G}, \mathcal{H})$ on $\Omega$ if for every open subset $U\subset \Omega$, the following condition is satisfied:
	\begin{align*}
		u\in \mathcal{F}(U) \ \textrm{and} \ Pu \in \mathcal{H}(U) \ \Longrightarrow \ u \in \mathcal{G}(U).
	\end{align*}
\end{Def}

\begin{Obs}
	Any operator $P$ which is $\hypo(\mathcal{F}, \mathcal{G})$ is trivially $\hypo(\mathcal{F}, \mathcal{G}, \mathcal{H}).$
\end{Obs}

\section{Hypoellipticity for constant coefficient operators in the Gevrey context} \label{constant coefficients operators}

In this section we study  relations regarding  hypoellipticity of pairs in the real-analytic and Gevrey frameworks for operators with constant coefficients. First we recall a result about equivalent notions of analytic-hypoellipticity.
\begin{Teo} [Theorem 1 - \cite{ch1}]  \label{Theorem 1}
  Let $P$ be a constant coefficient differential operator in $\RN$. The following properties are equivalent:
  \begin{enumerate}
  \item $P$ is elliptic;
  \item $P$ is $\hypo(\D', \Com)$; \label{eq2}
  \item $P$ is $\hypo(\mathcal{B}, \Com)$;\label{eq3}
  \item $P$ is $\hypo(\mathcal{B}, \Cinf)$. \label{eq4}
  \end{enumerate}
\end{Teo}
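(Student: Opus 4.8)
The plan is to establish the cycle of implications $(1) \Rightarrow (2) \Rightarrow (3) \Rightarrow (4) \Rightarrow (1)$, exploiting the abstract reductions already available. The implication $(3) \Rightarrow (4)$ is immediate from the second Proposition in Section~\ref{section2}: since $\Cinf$ sits between $\mathcal{B}$ and $\Com$ is false in the wrong direction — rather, $\Com \subset \Cinf \subset \mathcal{B}$, so $\hypo(\mathcal{B}, \Com)$ together with the trivial $\hypo(\Com, \Com)$ does not directly give it; instead one notes that $\hypo(\mathcal{B},\Com)$ says every hyperfunction sent into $\Com$ is already real-analytic, and in particular every hyperfunction sent into $\Cinf \supset \Com$... this needs care. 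The cleaner route is: $(2) \Leftrightarrow (3)$ follows from the cited result of \cite{hr} (quoted in the excerpt), because for an operator failing to be $\hypo(\D',\Com)$ the obstruction lies on $\Char P$, and the wavefront-set equivalence there shows $\hypo(\mathcal{B},\Com) \Leftrightarrow \hypo(\D',\Com)$ for operators with real-analytic (in particular constant) coefficients. So the real content is $(1) \Rightarrow (2)$, $(1) \Rightarrow (3)$, $(1)\Rightarrow (4)$ on one side and the converses on the other.

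For $(1) \Rightarrow (2)$: ellipticity of a constant coefficient operator gives analytic hypoellipticity with respect to distributions by the classical elliptic regularity theorem (a parametrix construction via the Fourier transform, or invoking that $\Char P = \emptyset$ and the microlocal elliptic regularity $WF_a(u) \subset WF_a(Pu) \cup \Char P$). For $(1) \Rightarrow (4)$ and $(1) \Rightarrow (3)$: again ellipticity forces $\Char P = \emptyset$, and then the analytic-wavefront-set inclusion for hyperfunctions (which holds for constant coefficient, indeed real-analytic coefficient, operators) gives $WF_a(u) \subset WF_a(Pu)$ for all $u \in \mathcal{B}$, so $Pu \in \Com$ forces $WF_a(u) = \emptyset$, i.e. $u \in \Com$; and $Pu \in \Cinf$ forces $WF(u) = \emptyset$ by the analogous $\Cinf$-wavefront inclusion, giving $u \in \Cinf$.

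The nontrivial direction is proving one of the hypoellipticity properties implies ellipticity, say $(4) \Rightarrow (1)$ (the strongest hypothesis to work from, since $\mathcal{B}$ is the largest space and $\Cinf$ the most generous target). I would argue by contraposition: if $P$ is not elliptic, there is $\xi_0 \ne 0$ with $p_m(\xi_0) = 0$, where $p_m$ is the principal symbol. The task is to manufacture a hyperfunction $u$ that is genuinely non-smooth but with $Pu$ smooth. The natural candidate is built from plane-wave / exponential solutions: one looks for solutions of $Pu = 0$ of the form $u(x) = \int e^{i x \cdot \zeta(\tau)} a(\tau)\, d\tau$ with $\zeta(\tau)$ a curve in $\mathbb{C}^N$ lying on the characteristic variety $\{P(\zeta) = 0\}$ and escaping to infinity in a way that $\mathrm{Im}\,\zeta$ stays controlled relative to $\mathrm{Re}\,\zeta$ — this is precisely the kind of construction that produces a non-analytic, and with more effort non-smooth, null solution concentrated microlocally at $(0, \xi_0)$. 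Since $P$ has constant coefficients one has great freedom here; the classical fact is that $P$ elliptic $\iff$ every distributional (equivalently hyperfunctional) null solution is real-analytic, and the failure is witnessed by exactly such oscillatory-exponential superpositions. The main obstacle, and where I would spend the most effort, is verifying that the constructed $u$ actually fails to be smooth (not merely fails to be analytic) when $P$ is non-elliptic — this requires separating the genuinely non-elliptic case from the "hypoelliptic but not elliptic" case (like the heat operator), and it is here that the hyperfunction setting is essential: one uses that for the heat-type operators the bad null solutions, while smooth, are not hyperfunctions extendable past the relevant boundary, or more precisely one uses the precise Seidenberg–Tarski / Hörmander characterization of the characteristic variety to produce $\zeta(\tau)$ with $|\mathrm{Im}\,\zeta(\tau)| = O(\log|\mathrm{Re}\,\zeta(\tau)|)$ forcing analyticity-failure, versus the polynomial growth that would only cost a finite loss of derivatives. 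I would then cite or reproduce the relevant lemma from \cite{ch1} controlling these curves, and assemble $u$ as a hyperfunction via the boundary-value realization, concluding $u \notin \Cinf(U)$ for a suitable small $U$ while $Pu = 0 \in \Com(U) \subset \Cinf(U)$, contradicting $(4)$.
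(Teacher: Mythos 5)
The paper does not prove this theorem; it is stated as a recalled result from Cordaro--Hanges \cite{ch1} (note the heading ``Theorem 1 -- \cite{ch1}''), and the text immediately moves on to Theorem~\ref{Teoelliptichyperfunctions}, which adds two further equivalent conditions and is the first statement the authors actually prove. So there is no internal proof to compare your attempt against.

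On its own merits, your plan has the right skeleton but leaves the only nontrivial implication --- $(4)\Rightarrow(1)$, or equivalently $(3)\Rightarrow(1)$ --- at the level of a sketch. You correctly identify that the crux is a non-elliptic $P$ that is nevertheless $\hypo(\D',\Cinf)$ (heat-type), since for such $P$ no distribution can witness the failure of $(4)$; the counterexample must be a genuine hyperfunction. Your proposed construction (a superposition $u=\int e^{ix\cdot\zeta(\tau)}a(\tau)\,d\tau$ with $\zeta(\tau)$ a curve on the complex characteristic variety escaping to infinity with $|\Im\zeta|$ of strictly lower order than $|\Re\zeta|$, whose existence follows from non-ellipticity via Tarski--Seidenberg) is indeed the mechanism in \cite{ch1}, but the step you flag as ``where I would spend the most effort'' --- proving the resulting $u$ is a well-defined hyperfunction on a small set, that $Pu=0$, and that $u\notin\Cinf$ --- is precisely the content of the theorem and is not completed here. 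The growth-rate heuristic you give (logarithmic $\Im\zeta$ versus polynomial) is also stated backwards: hypoellipticity forces $|\Im\zeta|\to\infty$ on the variety, and non-ellipticity gives $|\Im\zeta|=O(|\Re\zeta|^{\alpha})$ for some $\alpha<1$; it is this power-law gap, not a logarithmic one, that lets one choose amplitudes $a(\tau)$ producing a hyperfunction that cannot be smooth. Finally, the opening paragraph's back-and-forth over a cycle $(3)\Rightarrow(4)$ is a distraction you correctly abandon: since the target sheaf changes from $\Com$ to the larger $\Cinf$, neither of the abstract monotonicity lemmas in Section~\ref{section2} applies, and the clean route is the one you settle on, namely proving each of $(2),(3),(4)$ equivalent to $(1)$.
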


Next  result shows that we can  add two new conditions to their theorem.

\begin{Teo}\label{Teoelliptichyperfunctions}
	Let $P$ be a constant coefficient differential operator in $\RN$. The following  are equivalent:
	\begin{enumerate}
		\item $P$ is elliptic; \label{eq1} \setcounter{enumi}{4}
		\item $P$ is $\hypo(\mathcal{B}, \D')$; \label{eq5}
		\item $P$ is $\hypo(\Cinf, \Com ).$ \label{eq6}
	\end{enumerate}
\end{Teo}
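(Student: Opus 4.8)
The plan is to show that each of the three conditions is equivalent to ellipticity by playing them off against the already-established Theorem~\ref{Theorem 1}. The implication $(1) \imply (5)$ and $(1) \imply (6)$ are the easy directions: if $P$ is elliptic, then by elliptic regularity $P$ is $\hypo(\mathcal{B}, \Com)$ (this is condition~\eqref{eq3} of Theorem~\ref{Theorem 1}), and since $\Com \subset \Cinf \subset \D'$, applying the monotonicity Proposition (part~\eqref{cond1} composed with the trivial fact that $Pu$ real-analytic is in particular smooth and distributional) gives that $P$ is $\hypo(\mathcal{B}, \D')$ and $\hypo(\mathcal{B}, \Cinf)$, hence \emph{a fortiori} (using part~\eqref{cond2} of the Proposition, since $\Cinf \subset \mathcal{B}$) also $\hypo(\Cinf, \Com)$ once we know $\hypo(\mathcal{B},\Com)$. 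So both \eqref{eq5} and \eqref{eq6} follow from \eqref{eq1}.

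The substance is in the converses. For $(6) \imply (1)$: assume $P$ is $\hypo(\Cinf, \Com)$. I would like to upgrade this to $\hypo(\mathcal{B}, \Cinf)$ — which is condition~\eqref{eq4} of Theorem~\ref{Theorem 1}, equivalent to ellipticity — so the goal is to show that if $u \in \mathcal{B}(U)$ with $Pu \in \Cinf(U)$, then $u \in \Cinf(U)$. The idea is to regularize: since $P$ has constant coefficients, it admits a fundamental solution $E$, and convolving with a suitable Gevrey/analytic approximate identity should replace $u$ by a one-parameter family of \emph{smooth} functions $u_\varepsilon$ with $Pu_\varepsilon$ still smooth and converging to $Pu$; applying the hypothesis to each $u_\varepsilon$ gives $u_\varepsilon \in \Com$, and one then needs a limiting/ellipticity argument. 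This is delicate because smoothing a hyperfunction is exactly the kind of operation that requires the infinite-order operator machinery from the Appendix. The cleaner route, which I expect to be the intended one, is to argue directly at the level of the characteristic variety: $P$ non-elliptic means there is $\xi_0 \in \R^N \setminus \{0\}$ with $P_m(\xi_0) = 0$ where $P_m$ is the principal symbol, and from this one constructs an explicit smooth (indeed, a suitable oscillatory/exponential-type) function $g$ that is not real-analytic but with $Pg$ real-analytic — e.g. adapting the classical Hörmander-type constructions of non-analytic null solutions, as in the proof of $(2)\imply(1)$ in \cite{ch1}. The point is that such constructions already live in $\Cinf$, so the same example that defeats $\hypo(\D',\Com)$ in the non-elliptic case also defeats $\hypo(\Cinf,\Com)$. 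Thus $\neg(1) \imply \neg(6)$.

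For $(5) \imply (1)$: assume $P$ is $\hypo(\mathcal{B}, \D')$, and suppose for contradiction that $P$ is not elliptic. By Theorem~\ref{Theorem 1}, non-ellipticity means $P$ is \emph{not} $\hypo(\mathcal{B}, \Com)$, but that alone does not immediately contradict $\hypo(\mathcal{B},\D')$; I need a hyperfunction $u$ with $Pu$ a distribution but $u$ not a distribution. The natural candidate is built from a fundamental solution: when $P$ is not elliptic, it has a fundamental solution that is a genuine hyperfunction but not a distribution on suitable open sets — or, more constructively, one exhibits a hyperfunction solution of $Pu = 0$ (so $Pu \in \D'$ trivially) that is not a distribution, again using that $P_m(\xi_0)=0$ for some real $\xi_0 \ne 0$ to write down a hyperfunction null solution of exponential growth type in the $\xi_0$-direction. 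Here the Appendix results on representing hyperfunctions via infinite-order operators applied to smooth functions will be the right tool to certify that the constructed object is a hyperfunction but escapes $\D'$.

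The main obstacle I anticipate is precisely this last construction for \eqref{eq5}: producing a hyperfunction that is annihilated by $P$ (or mapped into $\D'$) yet is provably \emph{not} a distribution. Unlike the $\Cinf$-versus-$\Com$ gap, which is classical and handled by explicit non-analytic null solutions, the $\mathcal{B}$-versus-$\D'$ gap requires genuinely exploiting the structure of hyperfunctions and is where the infinite-order-operator technology of the Appendix — together with a notion of strong ellipticity to rule out spurious analytic solutions — will carry the argument.
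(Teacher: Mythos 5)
You have the easy implications $(1)\Rightarrow(5)$ and $(1)\Rightarrow(6)$ right, and they match the paper. The content is entirely in the two converses, and both are left as plans rather than proofs; moreover, for $(6)\Rightarrow(1)$ the route you favor is not the paper's. The paper proves $(6)\Rightarrow(3)$ of Theorem~\ref{Theorem 1} via exactly the path you set aside as ``delicate'': it represents $u$ locally as $u=Qv$ with $Q$ a strongly elliptic hyperdifferential operator of type $\{p!\}$ and $v$ smooth, writes $Pu=Qg$ with $g\in\Com$, uses that $P$ commutes with $Q$ (constant coefficients) to obtain $Q(Pv-g)=0$, invokes the strong-ellipticity kernel result to conclude $Pv-g\in\Com(V)$, applies the $\hypo(\Cinf,\Com)$ hypothesis to $v$, and finishes by continuity of $Q$ on $\Com$. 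Your contrapositive alternative (use a real characteristic direction to build a smooth non-analytic null solution of $P$) is a legitimate classical strategy, and it would prove the slightly stronger statement $\neg(1)\Rightarrow\neg(6)$ directly; but you neither carry it out nor check that the null solution so produced is in $\Cinf$ rather than merely in $\D'$, and that is precisely the point at issue, since Theorem~\ref{Theorem 1} only furnishes a \emph{distributional} witness against $\hypo(\D',\Com)$.

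For $(5)\Rightarrow(1)$ the gap is sharper. The paper simply cites a known result (Schapira, \emph{Corollaire 2, p.\ 103} of \cite{sch}) asserting that a non-elliptic constant-coefficient operator admits a hyperfunction $u$ with $Pu\in\D'$ but $u\notin\D'$. You propose to construct such a $u$ from scratch using the real characteristic variety and exponential-type solutions certified via the Appendix machinery, and you yourself flag this as the main obstacle; but no construction is given, no verification that the candidate escapes $\D'$ is attempted, and no reduction to an existing result is made. As written, this direction of the equivalence is missing.
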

\begin{proof}
  The fact that \eqref{eq5} implies \eqref{eq1} is an immediate consequence of \cite[Corollary 2, pg. 103]{sch}. On the other hand,   \eqref{eq5} follows  directly from \eqref{eq1} thanks to the \emph{elliptic regularity theorem} (see \cite[Theorem 6.1]{hr}),  which shows the equivalence of statements \eqref{eq1}-\eqref{eq5}. Furthermore, it is immediate that   \eqref{eq6} follows from \eqref{eq3}. 
  
  Now assume \eqref{eq6}; let $U \subset \RN$ be an open set and $u\in \mathcal{B}(U)$ such that $Pu \in \Com(U)$. Fix $x_{0}$ in $U$; by Remark \ref{RepresentacaodeHiperfuncoes}, there exist $V\subset\subset U$ an open neighborhood of $x_{0}$,  a \emph{strongly elliptic} hyperdifferential operator $Q$ of type $\{p!\}$ (see Definitions \ref{symbols ultradiff} and \ref{Strongly elliptic}), $v\in \Cinf(V)$ and $g\in \Com(V)$ such that
  \begin{equation*}
	u = Qv \ \text{and} \ Pu = Qg \ \text{on $V$. }
  \end{equation*}
 Since $P$ has constant coefficients, it commutes with $Q$, and therefore
\begin{align*}
	Q(Pv-g)= PQv- Qg= Pu-Pu=0 \ \ \text{on $V$.}
\end{align*}
As a consequence of Remark~\ref{kernel analytic case} one has that $Pv-g\in \Com(V)$, which implies  that $Pv\in \Com(V)$. Since $P$ is $\hypo(\Cinf, \Com)$ it follows that $v\in \Com(V)$; from the continuity of $Q$ on $\Com(V)$ we infer that $u$ is real-analytic on an open neighborhood of $x_{0}$. Due to the fact that $x_{0}$ is arbitrary, we conclude that $P$ is $\hypo(\mathcal{B}, \Com)$.
\end{proof}

Next we consider the same problem in the Gevrey setting.

\begin{Teo}\label{teo1}
  Let $P$ be a constant coefficient differential operator in $\RN$. For any $s> 1$, we have the following equivalences:
  \begin{enumerate} 
  \item $P$ is $\hypo(\D'_s, G^{s})$; \label{eqs1}
  \item $P$ is $\hypo(\D', G^s)$; \label{eqs2}
  \item $P$ is $\hypo( \Cinf, G^s)$; \label{eqs3}
      \item $P$ is $\hypo( G^t, G^s)$ for $t>s$; \label{eqs3'}
  \item $P$ is $\hypo( \D'_s, \D')$; \label{eqs4}
  \item $P$ is $\hypo( \D'_s, \Cinf)$. \label{eqs5}
  \end{enumerate}
\end{Teo}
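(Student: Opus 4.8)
The plan is to prove the cycle of implications
\[
\eqref{eqs1} \Rightarrow \eqref{eqs2} \Rightarrow \eqref{eqs3} \Rightarrow \eqref{eqs3'} \Rightarrow \eqref{eqs1}
\]
and then separately handle the two ``lowering the target'' statements \eqref{eqs4} and \eqref{eqs5}. The implications $\eqref{eqs2} \Rightarrow \eqref{eqs3}$ and $\eqref{eqs3} \Rightarrow \eqref{eqs3'}$ are immediate from part~\eqref{cond2} of the Proposition, since $G^t \subset \Cinf \subset \D'$ and $\D'_s \subset \D'$; likewise $\eqref{eqs1} \Rightarrow \eqref{eqs2}$ follows because $\D' \subset \D'_s$. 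So the only substantial link in the first cycle is $\eqref{eqs3'} \Rightarrow \eqref{eqs1}$, and the core idea is exactly the representation trick used in the proof of Theorem~\ref{Teoelliptichyperfunctions}, now in the Gevrey/Roumieu setting: given $u \in \D'_s(U)$ with $Pu \in G^s(U)$, fix $x_0 \in U$ and use the appendix results (the Gevrey analogue of Remark~\ref{RepresentacaodeHiperfuncoes}) to write, on a small neighborhood $V \ni x_0$, $u = Qv$ with $v$ a Gevrey function of some order $t > s$ and $Q$ a strongly elliptic ultradifferential operator of class $\{p!^s\}$ (or whatever class represents $\D'_s$), together with $Pu = Qg$ for some $g \in G^s(V)$. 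Since $P$ has constant coefficients it commutes with $Q$, so $Q(Pv - g) = 0$ on $V$; strong ellipticity of $Q$ then forces $Pv - g \in G^s(V)$ — I would invoke here the Gevrey analogue of Remark~\ref{kernel analytic case}, namely that the kernel of a strongly elliptic operator of class $\{p!^s\}$ consists of $G^s$ functions — hence $Pv \in G^s(V)$. Applying hypothesis \eqref{eqs3'}, which says $P$ is $\hypo(G^t, G^s)$, to $v \in G^t(V)$ yields $v \in G^s(V)$, and then continuity of $Q$ on $G^s(V)$ gives $u = Qv \in G^s$ near $x_0$. Since $x_0$ is arbitrary and the problem is local (Remark~\ref{RemarkHypoLocal}), $P$ is $\hypo(\D'_s, G^s)$.

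For the remaining two statements I would note first that $\eqref{eqs4} \Rightarrow \eqref{eqs5}$ is not literally an instance of the Proposition, so instead I would prove $\eqref{eqs1} \Leftrightarrow \eqref{eqs4}$ and $\eqref{eqs1} \Leftrightarrow \eqref{eqs5}$ directly, thereby closing the full equivalence. The implications $\eqref{eqs1} \Rightarrow \eqref{eqs5}$ and $\eqref{eqs1} \Rightarrow \eqref{eqs4}$ are essentially trivial: if $Pu \in \Cinf(U)$ (resp. $Pu \in \D'(U)$ — always true) with $u \in \D'_s(U)$, and $P$ is $\hypo(\D'_s, G^s)$, we conclude $u \in G^s(U) \subset \Cinf(U) \subset \D'(U)$, giving \eqref{eqs5} (resp. \eqref{eqs4}). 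The reverse directions $\eqref{eqs5} \Rightarrow \eqref{eqs1}$ and $\eqref{eqs4} \Rightarrow \eqref{eqs1}$ are where the real content lies and where I expect the main obstacle. Here the natural route is to show $\eqref{eqs4} \Rightarrow \eqref{eqs3'}$ (and similarly $\eqref{eqs5} \Rightarrow \eqref{eqs3'}$, which is even easier since $\Cinf$ is a smaller target), combining this with the cycle already established. Suppose $P$ is $\hypo(\D'_s, \D')$ and take $v \in G^t(V)$ with $Pv \in G^s(V)$ for some $t > s$; we must deduce $v \in G^s$. The idea is to use a strongly elliptic ultradifferential operator $Q$ of class $\{p!^s\}$ to ``compress'' the gap: set $u = Qw$ for a suitable smooth (or Gevrey) $w$ chosen so that $u$ encodes $v$ but lies a priori only in $\D'_s$, arrange $Pu \in \D'$, apply \eqref{eqs4} to get $u \in \D'$, and then run an elliptic-type argument to pull the regularity of $u$ back to $v$. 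Alternatively — and this is probably cleaner — one can argue contrapositively: if $P$ fails \eqref{eqs3'}, there is $V$ and $v \in G^t(V) \setminus G^s(V)$ with $Pv \in G^s(V)$; applying a strongly elliptic $Q$ of class $\{p!^s\}$ with $Q: G^s \to G^s$ and $Q : \D'_s \to \D'_s$ surjective onto a large class, one produces $u$ with $u \in \D'_s(V) \setminus \D'(V)$ (using that $v \notin G^s$ prevents $u$ from being a distribution, via the strong ellipticity / injectivity modulo $G^s$) while $Pu = QPv \in \D'$ — indeed $QPv \in \Cinf$ if $Q$ gains enough regularity — contradicting \eqref{eqs4}.

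The principal technical burden, and the step I expect to be hardest, is making precise the representation and ``compression'' machinery for the Roumieu class $\D'_s$: one needs that every ultradistribution of order $s$ is locally $Q v$ for a strongly elliptic ultradifferential operator $Q$ of the appropriate class acting on a Gevrey function $v$ of some higher order $t > s$, that $\ker Q \subset G^s$ locally (strong ellipticity), that $Q$ is continuous and — crucially for the contrapositive direction — that $u = Qv$ with $v \notin G^s$ forces $u \notin \D'$ (or at least $u \notin \Cinf$), so that the obstruction is genuinely transported. All of these should be available from the Appendix (the cited Remarks~\ref{RepresentacaodeHiperfuncoes}, \ref{kernel analytic case} and their Gevrey analogues, together with the Komatsu-type structure theorems), but assembling them with the correct bookkeeping of orders $s < t$ and tracking which class of ultradifferential operator ($\{p!^s\}$ versus $\{p!\}$) to use at each stage is the delicate part. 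Once the lemmas are in place, each individual implication is short, and the whole theorem reduces to the single substantive argument — the commutation $Q(Pv - g) = 0$ plus strong ellipticity plus the hypothesis — repeated in slightly different guises.
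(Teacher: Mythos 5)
The first half of your argument --- the cycle $\eqref{eqs1} \Rightarrow \eqref{eqs2} \Rightarrow \eqref{eqs3} \Rightarrow \eqref{eqs3'} \Rightarrow \eqref{eqs1}$ --- is correct and matches the paper's strategy exactly: the only nontrivial link is $\eqref{eqs3'} \Rightarrow \eqref{eqs1}$, proved via the Appendix representation $u = Qv$ with $v \in G^t$, the commutation $Q(Pv-g)=0$, and the fact that elements of $\ker Q$ are (real-analytic, hence) $G^s$; this is Proposition~\ref{hypoellipticity smooth - distribution} in the paper.

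There is, however, a genuine gap in how you treat \eqref{eqs4} and \eqref{eqs5}. You assert that $\eqref{eqs1} \Rightarrow \eqref{eqs4}$ and $\eqref{eqs1} \Rightarrow \eqref{eqs5}$ are essentially trivial, because for $u \in \D'_s(U)$ with $Pu \in \Cinf(U)$ (resp.\ $Pu \in \D'(U)$) you would apply $\hypo(\D'_s, G^s)$ to conclude $u \in G^s(U)$. That step does not follow: the property $\hypo(\D'_s, G^s)$ gives $u \in G^s(U)$ only under the hypothesis $Pu \in G^s(U)$, and here $Pu$ is merely in $\Cinf(U)$ (resp.\ $\D'(U)$), which is strictly larger than $G^s(U)$ --- so the hypothesis of \eqref{hipodepravado} is not satisfied and the implication is vacuous. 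In addition, the parenthetical ``$Pu \in \D'(U)$ --- always true'' is incorrect: for $u \in \D'_s(U)$ one has $Pu \in \D'_s(U)$, and $\D' \subsetneq \D'_s$ when $s>1$, so $Pu \in \D'(U)$ is a genuine regularity assumption, not an automatic fact; indeed that is precisely what makes \eqref{eqs4} a nontrivial statement.

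The missing idea is the fundamental solution. In Propositions~\ref{equivalence 3-4} and \ref{equivalence 2-5} the paper passes from $\hypo(\D'_s, G^s)$ to $\hypo(\D'_s, \D')$ and $\hypo(\D'_s, \Cinf)$ as follows: given $u \in \D'_s(U)$ with $Pu \in \D'(U)$ (resp.\ $Pu \in \Cinf(U)$), choose $\chi \in \Cinf_c$ equal to $1$ near $x_0$ and set $v = E * (\chi P u)$, where $E$ is a fundamental solution of $P$; then $v$ is a distribution (resp.\ smooth), and $P(u-v)=0$ near $x_0$, so $Pu - Pv \in G^s$ there. Only \emph{now} can one invoke $\hypo(\D'_s, G^s)$ to deduce $u - v \in G^s$, hence $u \in \D'$ (resp.\ $u \in \Cinf$) locally. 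This reduction to the homogeneous case is essential, not cosmetic; without it the implication does not close. Your sketch for the converse implications $\eqref{eqs4} \Rightarrow \eqref{eqs3'}$ and $\eqref{eqs5} \Rightarrow \eqref{eqs3'}$ via the analogue of Theorem~\ref{PropA1} (sending $v \in G^t \setminus G^s$ to $u = Qv \in \D'_s \setminus \D'$ while $Pu = QPv$ stays regular) is on the right track and close to the argument the paper actually uses there, so the structure of your plan is salvageable once the fundamental-solution step is inserted for the direction you labeled trivial.
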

The proof of Theorem \ref{teo1} has several steps and will require results related to infinite order differential operators.
 It is clear that if we prove that $\eqref{eqs1}$ is consequence of $\eqref{eqs3'}$, then the first four statements are equivalent.

  \begin{Pro} \label{hypoellipticity smooth - distribution}
     Let $P$ be a constant coefficient differential operator in $\RN$. If  $P$ is $\hypo( G^t, G^s)$ for $t>s$, then $P$ is   $\hypo( \D'_s, G^s)$.
  \end{Pro}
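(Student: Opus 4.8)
The plan is to exploit the representation of Roumieu ultradistributions of order $s$ as infinite order differential operators of type $\{p!^s\}$ applied to smooth (indeed, to continuous or $L^1_{\loc}$) functions, in the same spirit as the argument given for Theorem~\ref{Teoelliptichyperfunctions}. Concretely, suppose $P$ is $\hypo(G^t, G^s)$ for $t > s$, let $U \subset \RN$ be open, and take $u \in \D'_s(U)$ with $Pu \in G^s(U)$. Since hypoellipticity is a local property (Remark~\ref{RemarkHypoLocal}), it suffices to show $u$ is Gevrey of order $s$ near an arbitrary point $x_0 \in U$. First I would invoke the relevant structure theorem from the Appendix: there is an open neighborhood $V \subset\subset U$ of $x_0$, a \emph{strongly elliptic} hyperdifferential operator $Q$ of type $\{p!^s\}$, and $v \in \Cinf(V)$ (or at least $v \in G^t(V)$, whichever the representation theorem supplies) such that $u = Qv$ on $V$. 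One also needs a representation $Pu = Qg$ with $g \in G^s(V)$; since $Pu$ is already Gevrey of order $s$ this is automatic (take $g$ so that $Qg = Pu$, again using the structure theorem applied to $Pu$, or more simply note $Pu \in G^s(V) \subset \D'_s(V)$ and represent it).

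Next, because $P$ has constant coefficients it commutes with $Q$, so on $V$ we have
\begin{equation*}
Q(Pv - g) = P Q v - Q g = P u - P u = 0.
\end{equation*}
Then I would apply the strong ellipticity of $Q$ of type $\{p!^s\}$ — the analogue of Remark~\ref{kernel analytic case} in the Gevrey-$s$ setting, which should say that any element of the kernel of a strongly elliptic hyperdifferential operator of type $\{p!^s\}$ is itself a Gevrey function of order $s$ — to conclude $Pv - g \in G^s(V)$, hence $Pv \in G^s(V)$. Now the key point: if the structure theorem gives $v \in \Cinf(V)$, I cannot directly apply $\hypo(G^t, G^s)$; instead I would first apply $\hypo(\Cinf, G^s)$. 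But that is exactly one of the statements (\ref{eqs3}) whose equivalence with (\ref{eqs3'}) we are building up — so to keep the logical chain clean, I would arrange the structure theorem to produce $v$ in a Gevrey class, say $v \in G^{t}(V)$ for some (any) fixed $t > s$; this is the strong-ellipticity "regularity up to $G^t$" refinement, which the Appendix results should provide. With $v \in G^t(V)$ and $Pv \in G^s(V)$, the hypothesis $\hypo(G^t, G^s)$ yields $v \in G^s(V)$.

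Finally, since $Q$ is continuous on $G^s(V)$ (an infinite order operator of type $\{p!^s\}$ acts continuously on the sheaf $G^s$), we get $u = Qv \in G^s$ on a neighborhood of $x_0$. As $x_0$ was arbitrary, $u \in G^s(U)$, so $P$ is $\hypo(\D'_s, G^s)$. The main obstacle — and the place where the Appendix is genuinely needed — is securing the two technical inputs in the Gevrey-$s$ category simultaneously: (i) the local factorization $u = Qv$ of an order-$s$ ultradistribution with $Q$ strongly elliptic of type $\{p!^s\}$ and $v$ in a Gevrey class strictly larger than $G^s$ (rather than merely smooth), and (ii) the strong-ellipticity statement that $\ker Q \subset G^s$ locally. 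Both are analogues of the analytic-case facts used in Theorem~\ref{Teoelliptichyperfunctions} (Remarks~\ref{RepresentacaodeHiperfuncoes} and~\ref{kernel analytic case}), but one must check that the Gevrey versions hold with the regularity class of $v$ chosen to feed into $\hypo(G^t, G^s)$; everything else is a routine commutation-and-continuity argument.
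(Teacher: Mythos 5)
Your argument is correct and follows essentially the same route as the paper's: Theorem~\ref{prop a3} does indeed supply the local factorization $u = Qv$ with $Q$ a strongly elliptic ultradifferential operator of type $\{p!^s\}$ and $v \in G^t(V)$ (for the fixed $t > s$), the representation $Pu = Qg$ with $g \in G^s(V)$ via the same $Q$, and Theorem~\ref{propdokernel} gives $\ker Q \subset \Com \subset G^s$ locally, after which the commutation, the hypothesis $\hypo(G^t,G^s)$, and the continuity of $Q$ on $G^s$ close the argument exactly as you describe. The only cosmetic difference is that the paper phrases the proof by contraposition (assuming $\hypo(\D'_s, G^s)$ fails and producing a $G^t$ counterexample), whereas you argue directly; the technical inputs and logical content are identical.
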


\begin{proof}
  Assume that $\hypo( \D'_s, G^{s})$ does not hold; then there exist an open set $U\subset \RN$ and $u \in \D'_s(U)\setminus G^{s}(U)$ such that $Pu=f \in G^s(U)$. Let $x_0 \in U$ such that the germ of $u$ at $x_0$ cannot be represented as a Gevrey function of order $s$ in any neighborhood of $x_0$.
 Fix $V\subset \subset U$ an open neighborhood of $x_0$.  By Theorem~\ref{prop a3}, there exist  a strongly elliptic ultradifferential $Q$ operator of type $\{p!^s\}$,   $g \in G^{s}(V)$ and $v\in G^t(V)\setminus G^s(V)$ such that
 \begin{equation*}
 \text{$Qv= u$ in $\D'_s(V)$ and $Qg =f$ in $G^s(V)$}.
 \end{equation*}  
Then one has, on $V$, 
\begin{equation*}
QPv= PQv = Pu = f= Q g \ \ \Longrightarrow \ \ Q(P v- g) =0.
\end{equation*}
Thanks to Theorem~\ref{propdokernel}, there exists $h\in \Com(V)$ such that $Pv= g+h$, proving that $Pv \in G^{s}(V)$ and $P$ is not $\hypo(G^t, G^{s})$, finalizing the proof.
\end{proof}

\begin{Cor}
  Let $P$ be a constant coefficient differential operator in $\RN$.
If  $P$ is not $\hypo(\D', G^s)$, then, for every $t>s$, there exist $U$ an open set and $u \in G^t(U)\setminus G^{s}(U)$ such that $Pu \in G^{s}(U)$. 
 \end{Cor}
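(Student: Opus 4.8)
The plan is to obtain the Corollary as an immediate consequence of Theorem~\ref{teo1}, so that no new tool is needed. The first thing I would do is record that statements \eqref{eqs1}--\eqref{eqs3'} of Theorem~\ref{teo1} are equivalent: indeed, from the inclusions $G^{t}\subset\Cinf\subset\D'\subset\D'_s$ together with the fact (the Proposition in Section~\ref{section2}) that $\hypo(\mathcal{F},\mathcal{G})$ implies $\hypo(\mathcal{F}',\mathcal{G})$ whenever $\mathcal{F}'$ is a subsheaf of $\mathcal{F}$, one gets $\eqref{eqs1}\imply\eqref{eqs2}\imply\eqref{eqs3}\imply\eqref{eqs3'}$, while $\eqref{eqs3'}\imply\eqref{eqs1}$ is exactly Proposition~\ref{hypoellipticity smooth - distribution}. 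In particular \eqref{eqs2} and \eqref{eqs3'} are equivalent. I would also point out that this same chain shows that $\hypo(G^{t},G^{s})$ for one exponent $t>s$ is equivalent to $\hypo(G^{t},G^{s})$ for every exponent $t>s$, since $\hypo(G^{t_0},G^{s})$ for a single $t_0>s$ gives \eqref{eqs1}, hence \eqref{eqs3}, hence $\hypo(G^{t},G^{s})$ for all $t>s$.

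The next step is a contrapositive argument at the level of the pair $(G^{t},G^{s})$. Assume $P$ is not $\hypo(\D',G^{s})$, i.e.\ \eqref{eqs2} fails; by the equivalence just recalled, $P$ is not $\hypo(G^{t},G^{s})$ for any $t>s$. Fixing such a $t$ and unwinding the definition of hypoellipticity of the pair $(G^{t},G^{s})$, that is, the negation of property~\eqref{hipodepravado}, its failure means precisely that there exist an open set $U\subset\RN$ and a function $u\in G^{t}(U)$ with $Pu\in G^{s}(U)$ but $u\notin G^{s}(U)$, which is exactly the assertion of the Corollary.

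Since every ingredient is already available, I do not foresee any genuine difficulty; the only point deserving attention is the bookkeeping of the quantifier over $t$ in statement~\eqref{eqs3'} of Theorem~\ref{teo1}, namely making explicit that the equivalences force $\hypo(G^{t},G^{s})$ to hold --- or to fail --- simultaneously for all $t>s$, so that the conclusion is legitimately ``for every $t>s$'' rather than merely ``for some $t>s$''.
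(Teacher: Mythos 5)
Your proof is correct and essentially follows the route the paper intends: the Corollary is just the contrapositive of Proposition~\ref{hypoellipticity smooth - distribution} combined with the trivial implication $\hypo(\D'_s,G^s)\imply\hypo(\D',G^s)$, and the observation you make about the quantifier (that failure of $\hypo(G^{t},G^{s})$ propagates to every $t>s$ because Theorem~\ref{prop a3} supplies a representation for each such $t$) is exactly what makes the ``for every $t>s$'' conclusion legitimate.
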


The next step is to prove an equivalence between statements \eqref{eqs3} and \eqref{eqs4}.

 \begin{Pro} \label{equivalence 3-4}
   Let $P$ be a constant coefficient differential operator in $\RN$. Then $P$ is $\hypo(\D'_s, \D')$ if and only if $\hypo(\Cinf, G^{s})$.
 \end{Pro}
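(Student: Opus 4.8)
The plan is to follow the scheme of the proofs of Theorem~\ref{Teoelliptichyperfunctions} (equivalence of \eqref{eq5} and \eqref{eq6}) and of Proposition~\ref{hypoellipticity smooth - distribution}: I would reduce each implication to the local representation of an ultradistribution of class $\{s\}$ as a strongly elliptic ultradifferential operator $Q$ of type $\{p!^s\}$ applied to a smooth function, and then use that $Q$ commutes with $P$ (constant coefficients), that $Q$ maps $G^s$ into $G^s$ and $\Cinf$ into $\D'_s$, and that $\ker Q\subset\Com$ (Theorem~\ref{propdokernel}, Remark~\ref{kernel analytic case}). By Remark~\ref{RemarkHypoLocal} the problem is local, so throughout I pass freely to small neighborhoods of a fixed point.

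To see that $\hypo(\Cinf,G^s)$ implies $\hypo(\D'_s,\D')$ I would argue by contradiction: suppose $u\in\D'_s(U)$ satisfies $Pu=f\in\D'(U)$ but $u\notin\D'(U)$, fix $x_0\in U$ where the germ of $u$ is not a distribution, and take $V\subset\subset U$ a neighborhood of $x_0$. By Theorem~\ref{prop a3} --- used with the hypothesis on the right-hand side relaxed from $f\in G^s(V)$ to $f\in\D'(V)\subset\D'_s(V)$, which the same proof permits since the construction is governed by the growth of the more singular datum $u$ --- there are a strongly elliptic ultradifferential operator $Q$ of type $\{p!^s\}$, a function $v\in\Cinf(V)$ and $g\in G^s(V)$ with $u=Qv$ and $f=Qg$ on $V$; moreover $v\notin G^s(V)$, for $v\in G^s(V)$ would give $u=Qv\in G^s(V)\subset\D'(V)$, contrary to the choice of $x_0$. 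Since $P$ has constant coefficients, $Q(Pv-g)=PQv-Qg=Pu-f=0$ on $V$, so $Pv-g\in\Com(V)$ by Theorem~\ref{propdokernel}, hence $Pv\in G^s(V)$. As $v\in\Cinf(V)$ and $P$ is $\hypo(\Cinf,G^s)$, we get $v\in G^s(V)$, a contradiction; therefore $P$ is $\hypo(\D'_s,\D')$.

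For the converse I would prove the contrapositive. Assume $\hypo(\Cinf,G^s)$ fails: there are an open set $U$ and $w\in\Cinf(U)\setminus G^s(U)$ with $Pw=h\in G^s(U)$. Pick $x_0\in U$ where the germ of $w$ is not of class $G^s$, a ball $V\subset\subset U$ about $x_0$, and any strongly elliptic ultradifferential operator $Q$ of type $\{p!^s\}$. Since $\Cinf(V)\subset\D'_s(V)$ and $Q$ acts continuously on $\D'_s(V)$, the element $u:=Qw\in\D'_s(V)$ is well defined, and $Pu=QPw=Qh\in G^s(V)\subset\D'(V)$ because $Q$ preserves $G^s$. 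It remains to see that $u\notin\D'(V)$: if $Qw$ were a distribution on $V$, then, using that a strongly elliptic operator of type $\{p!^s\}$ is itself $\hypo(\Cinf,G^s)$ --- equivalently, representing the distribution $Qw$ as $Q\gamma$ with $\gamma\in G^s(V)$ (local solvability of $Q$ on $G^s$), so that $Q(w-\gamma)=0$ and hence $w-\gamma\in\Com(V)$ by Theorem~\ref{propdokernel} --- we would obtain $w\in G^s(V)$, contradicting the choice of $x_0$. Hence $u\in\D'_s(V)\setminus\D'(V)$ while $Pu\in\D'(V)$, so $P$ is not $\hypo(\D'_s,\D')$, which closes the argument.

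The differential-operator content here is light: it is entirely encoded in the commutation $PQ=QP$ and in the elementary behavior of $Q$ on the relevant sheaves. The substance --- and the place I expect to be the main obstacle --- lies in invoking the Appendix results in the exact forms used above: that the local representation $u=Qv$, $Pu=Qg$ is available with $v$ merely smooth and $g$ of class $G^s$ even when $Pu$ is only a distribution, and that a strongly elliptic ultradifferential operator of type $\{p!^s\}$ is Gevrey-$s$ hypoelliptic (local solvability on $G^s$ together with the real-analyticity of its kernel). Both follow from the symbol estimates defining type $\{p!^s\}$ and strong ellipticity, but they must be isolated cleanly before the scheme above goes through.
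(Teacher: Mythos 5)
Your proposal does not follow the paper's route, and both directions contain genuine gaps.

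In the direction $\hypo(\Cinf,G^s)\Rightarrow\hypo(\D'_s,\D')$, you ``relax'' Theorem~\ref{prop a3} so that the right-hand side $f=Pu$ is merely a distribution yet is still represented as $f=Qg$ with $g\in G^s(V)$. This is impossible in general: an ultradifferential operator of type $\{p!^s\}$ maps $G^s$ continuously into $G^s$, so writing $f=Qg$ with $g\in G^s$ would force $f\in G^s(V)$, which is precisely what you do not have. The representation theorem can only produce $g$ in a Gevrey class $G^t$ with $t>s$, and with that the contradiction no longer closes (you would obtain $Pv\in G^t$, not $G^s$, and $\hypo(\Cinf,G^s)$ cannot be applied). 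The paper avoids this entirely: it first upgrades $\hypo(\Cinf,G^s)$ to $\hypo(\D'_s,G^s)$ via Proposition~\ref{hypoellipticity smooth - distribution}, and then uses the Malgrange--Ehrenpreis fundamental solution $E$ of $P$: with $\chi\equiv1$ near $x_0$ one sets $v=E\ast(\chi Pu)\in\D'$, observes $P(u-v)=0$ near $x_0$, invokes $\hypo(\D'_s,G^s)$ to get $u-v\in G^s$, and concludes $u\in\D'$. You cannot dispense with the fundamental solution by tinkering with Theorem~\ref{prop a3}.

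In the converse direction, you pick an \emph{arbitrary} strongly elliptic ultradifferential operator $Q$ of type $\{p!^s\}$ and assert that $u:=Qw\notin\D'(V)$ whenever $w\in\Cinf\setminus G^s$. This fails for generic $Q$: the constant symbol $Q(\zeta)\equiv1$ is a perfectly valid strongly elliptic symbol of type $\{p!^s\}$, and then $Qw=w\in\Cinf\subset\D'$. The property you need --- $Qw\notin\D'$ --- holds only for $Q$ constructed to match the (non-Gevrey) growth of $\widehat{\chi w}$; that is exactly the content of Theorem~\ref{PropA1}, which the paper invokes at this point, and whose (non-trivial) proof builds the coefficients $a_{2k}$ from the sequence of frequencies $\xi_j$ at which $\widehat{\chi w}$ violates the $G^s$ decay. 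Your attempted justification --- writing $Qw=Q\gamma$ with $\gamma\in G^s$ by ``local solvability of $Q$ on $G^s$'' --- has the same defect as the first direction: it would require $Qw\in G^s$, not merely $Qw\in\D'$. So the proof must go through the tailored construction of Theorem~\ref{PropA1}; an off-the-shelf strongly elliptic $Q$ does not do the job.
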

 \begin{proof}
   If $P$ is not $\hypo( \Cinf, G^s)$ there exist an open set $U \subset \RN$  and $u \in \Cinf(U)\setminus G^{s}(U)$ such that $Pu \in G^{s}(U)$.
Let $x_0$ be a point in $U$ such that the germ of $u$ at $x_0$ cannot be represented by a Gevrey function of order $s$ in any neighborhood of $x_0$.
   By Theorem~\ref{PropA1}, one can find an ultradifferential operator $Q$ of type $\{p!^s\}$ and  $V \subset \subset U$ an open neighborhood of $x_0$ such that  $Qu|_V \in \D'_s(V)\setminus \D'(V).$ On the other hand $P(Qu) = Q(Pu)\in G^{s}(V)$, which shows that  $P$ is not $\hypo(\D'_s, \D').$

   Now assume that $P$ is $\hypo(\Cinf, G^s)$; by Proposition~\ref{hypoellipticity smooth - distribution}, $P$ is also $\hypo(\D'_s, G^s)$. Let $U \subset \RN$ be an open set and consider $u \in\D'_s(U)$ for which $Pu \in \D'(U)$. For any $x_{0}\in U$, take $ V$ an open neighborhood of $x_{0}$ and $\chi\in \Cinf_c(U)$ such that $\chi \equiv 1$ on $V$. 
   
   Consider $f= \chi P u\in \E'(U)$ and denote by $E$ the fundamental solution of $P$. Then $P(f\ast E) = f\ast (P E)=f.$ Thus, by defining $v= f \ast E \in \D'(U)$, it follows that $P(u-v)=0$ on $V$. Since $P$  is $\hypo(\D'_s, G^s)$, $u-v = g\in G^{s}(V)$, which implies that $u= v+g\in \D'(V)$. Hence $u$ is a distribution in a neighborhood of $x_{0}$ and since $x_{0}$ can be taken arbitrarily, we deduce that $u\in \D'(U)$, finalizing the proof. 
 \end{proof}

Finally we prove an equivalence between statements \eqref{eqs2} and \eqref{eqs5}.

 \begin{Pro} \label{equivalence 2-5}
    Let $P$ be a constant coefficient differential operator in $\RN$. Then $P$ is $\hypo(\D'_s, \Cinf)$ if and only if $P$ is $\hypo(\D', G^s)$.
  \end{Pro}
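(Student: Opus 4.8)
The plan is to prove each implication by contrapositive, mimicking the structure used in the proof of Proposition~\ref{equivalence 3-4}, and to exploit the constant-coefficient hypothesis so that $P$ commutes with every ultradifferential operator and possesses a fundamental solution $E$. First I would assume that $P$ is not $\hypo(\D', G^s)$: then there is an open set $U \subset \RN$ and $u \in \D'(U) \setminus G^s(U)$ with $Pu \in G^s(U)$, and a point $x_0 \in U$ whose germ of $u$ cannot be represented by a Gevrey-$s$ function near $x_0$. Applying Theorem~\ref{PropA1} to the distribution $u$, I obtain an ultradifferential operator $Q$ of type $\{p!^s\}$ and a neighborhood $V \subset\subset U$ of $x_0$ such that $Qu|_V \in \D'_s(V)$; moreover $Qu|_V$ cannot be smooth on any smaller neighborhood of $x_0$, for if it were, then applying the reasoning from the proof of Proposition~\ref{equivalence 3-4} (localize with a cutoff $\chi$, convolve $\chi P(Qu)$ with $E$, subtract) together with strong ellipticity of an auxiliary operator would force $u$ itself to be Gevrey-$s$ near $x_0$, a contradiction. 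Since $P(Qu) = Q(Pu) \in G^s(V) \subset \Cinf(V)$, this shows $P$ is not $\hypo(\D'_s, \Cinf)$.

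For the converse, I would assume $P$ is $\hypo(\D', G^s)$ and show $P$ is $\hypo(\D'_s, \Cinf)$. By Proposition~\ref{equivalence 3-4} (equivalence of \eqref{eqs3} and \eqref{eqs4}, using that \eqref{eqs2}$\Leftrightarrow$\eqref{eqs3} via the chain already established through Proposition~\ref{hypoellipticity smooth - distribution} and its corollary, or directly: $\hypo(\D',G^s) \Rightarrow \hypo(\Cinf,G^s) \Leftrightarrow \hypo(\D'_s,\D')$), $P$ is $\hypo(\D'_s, \D')$ and $\hypo(\D'_s, G^s)$. Now take $U \subset \RN$ open and $u \in \D'_s(U)$ with $Pu \in \Cinf(U)$. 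Fix $x_0 \in U$, choose $V \subset\subset U$ a neighborhood of $x_0$ and $\chi \in \Cinf_c(U)$ with $\chi \equiv 1$ on $V$. Set $f = \chi Pu \in \E'(U) \cap \Cinf(U)$ and $v = f \ast E \in \D'(U)$, so that $Pv = f$ and hence $P(u - v) = 0$ on $V$; moreover $v \in \Cinf$ near $x_0$ by standard elliptic-type regularity since $f$ is smooth — actually one only needs that $v$ is a distribution, which is automatic, together with the fact that $P(u-v) = 0$. Applying $\hypo(\D'_s, G^s)$ to $u - v$ (which lies in $\D'_s(V)$) gives $u - v = g \in G^s(V)$, whence $u = v + g$; but then $u - g = v \in \D'(V)$, and applying $\hypo(\D', G^s)$ to $v$ — noting $Pv = f \in \Cinf$, so we must first upgrade $Pv$ to be Gevrey-$s$ by the same cutoff-and-fundamental-solution argument locally — forces $v \in G^s$ near $x_0$, hence $u \in G^s$ near $x_0$. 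A cleaner route: since $P$ is $\hypo(\D'_s,\D')$ we directly get $u \in \D'(V)$; since $Pu \in \Cinf(V) \subset G^s$ is false in general but $\Cinf \supsetneq G^s$ — here I must instead observe that $\hypo(\D',G^s)$ is not what is needed. So the correct final step is: from $u \in \D'(V)$ and $Pu \in \Cinf(V)$, I invoke that $P$, being $\hypo(\D',G^s)$, is a fortiori $\hypo(\D',\Cinf)$ (since $G^s \subset \Cinf$, by Proposition~\ref{cond2}-type reasoning — actually hypoellipticity toward the larger sheaf $\Cinf$ follows since any $u$ with $Pu \in \Cinf$ that one wants to show smooth: this requires $\hypo(\D',\Cinf)$, which indeed follows from $\hypo(\D',G^s)$ because $G^s \subset \Cinf$ and the target of \eqref{hipodepravado} can be enlarged). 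Thus $u \in \Cinf(V)$, and since $x_0$ is arbitrary, $u \in \Cinf(U)$.

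The main obstacle I anticipate is the bookkeeping in the converse direction: one must be careful that enlarging the target sheaf in a hypoellipticity statement is legitimate (it is — if $Pu \in \mathcal{G} \subset \mathcal{G}'$ and $\hypo(\mathcal{F},\mathcal{G})$ holds then $u \in \mathcal{G} \subset \mathcal{G}'$, so $\hypo(\mathcal{F},\mathcal{G}')$ holds), and that the localization via $\chi$ and convolution with the fundamental solution $E$ correctly reduces the global problem to the local one without introducing spurious singularities — this is exactly the device already used in Proposition~\ref{equivalence 3-4}, so it transfers verbatim. The forward direction's only subtlety is verifying that $Qu|_V$ fails to be smooth near $x_0$; this is precisely the content of Theorem~\ref{PropA1} combined with the observation that $Q$ is, up to an analytic error controlled by strong ellipticity (Theorem~\ref{propdokernel} / Remark~\ref{kernel analytic case}), "invertible modulo analytic functions," so smoothness of $Qu$ would propagate back to Gevrey regularity of $u$.
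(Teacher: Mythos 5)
Your argument for the implication $\hypo(\D'_s, \Cinf) \Rightarrow \hypo(\D', G^s)$ (by contrapositive) has a genuine gap: Theorem~\ref{PropA1} is stated only for $u \in \Cinf(U)\setminus G^{s}(U)$, and you apply it to an arbitrary distribution $u \in \D'(U)\setminus G^{s}(U)$. One must first replace the distributional counterexample by a smooth one, which is exactly why the paper's proof passes through $\hypo(\Cinf, G^{s})$ (equivalent to $\hypo(\D',G^{s})$ via Proposition~\ref{hypoellipticity smooth - distribution} and the established chain \eqref{eqs1}--\eqref{eqs3'}) before invoking the construction from Proposition~\ref{equivalence 3-4}. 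Also, the short digression in which you try to rule out $Qu|_V$ being smooth is unnecessary: Theorem~\ref{PropA1} already produces $Qu|_V \in \D'_s(V)\setminus\D'(V)$, and $\Cinf\subset\D'$, so $Qu|_V\notin\Cinf$ is automatic.

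For the converse, you in fact already have a complete and correct argument at the moment you write $u = v + g$ with $v = (\chi Pu)\ast E \in \Cinf(V)$ and $g = u - v \in G^{s}(V)$: this gives $u \in \Cinf(V)$, which is precisely the paper's proof. Everything you add afterward is a detour, and your proposed ``cleaner route'' contains a real error: the claim that $\hypo(\mathcal{F},\mathcal{G})$ implies $\hypo(\mathcal{F},\mathcal{G}')$ whenever $\mathcal{G}\subset\mathcal{G}'$ is false in the abstract framework of Section~\ref{section2}, because enlarging the target also \emph{weakens the hypothesis} ``$Pu\in\mathcal{G}'$'', so the definition \eqref{hipodepravado} does not formally transfer. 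The implication $\hypo(\D', G^{s})\Rightarrow\hypo(\D',\Cinf)$ is indeed true for constant-coefficient operators, but this is a substantive fact recorded in Remark~\ref{closed set hypo}, not a trivial sheaf inclusion, and in any case it is not needed once you notice you are already done.
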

  
  \begin{proof}
 Suppose that $P$ is $\hypo(\D', G^{s})$. By Proposition~\ref{hypoellipticity smooth - distribution}, $P$ is $\hypo(\D'_s, G^{s})$  as well. Let $U \subset \RN$ be an open set and $u \in \D'_s(U)$ such that $Pu \in \Cinf(U)$. Fix $x_{0} \in U$ and take $\chi\in \Cinf_c(U)$ such that $\chi=1$ on  $V\subset\subset U$, where $V$ is an open neighborhood of $x_{0}$. 
 
 Set $v= E \ast (\chi Pu)\in \Cinf(U)$, where $E$ denotes the fundamental solution of $P$. It follows that $Pv= Pu$ on $V$, which implies, by the assumption that $P$ is $\hypo(\D'_s, G^{s})$,  that $u - v\in G^{s}(V)$. Therefore $u\in \Cinf(V)$; since $x_{0}$ is arbitrary, we infer that $P$ is $\hypo(\D'_s, \Cinf)$. 
 
 Now we intend to show that 
 \begin{equation*}
 \text{$P$ is $\hypo(\D'_s, \Cinf)  \ \Longrightarrow \  P$ is $\hypo(\D', G^{s}) \ \Longleftrightarrow \ P$ is $\hypo(\Cinf, G^{s})$,}
 \end{equation*}
where the second implication is a consequence of Proposition~\ref{hypoellipticity smooth - distribution}. However, if $P$ is \emph{not} $\hypo(\Cinf, G^{s})$, by the argument applied in Proposition~\ref{equivalence 3-4}  one can find an open set $V \subset \RN$ and
\begin{equation*}
v \in \D'_s(V)\setminus \D'(V) \ \text{such that} \ Pv \in G^{s}(V) \subset \Cinf(V),
\end{equation*}
which proves that $P$ is not $\hypo(\D'_s, \Cinf)$.
 \end{proof}

 
\begin{proof}[Proof of Theorem \ref{teo1}]
	It is immediate that \eqref{eqs1} $\Rightarrow$ \eqref{eqs2} $\Rightarrow$ \eqref{eqs3}$\Rightarrow$ \eqref{eqs3'}. By Proposition~\ref{hypoellipticity smooth - distribution}, we have \eqref{eqs3'} $\Rightarrow$ \eqref{eqs1}, which shows the equivalence of the four first assertions. then it follows from Proposition~\ref{equivalence 3-4} and Proposition~\ref{equivalence 2-5} that \eqref{eqs3} $\Leftrightarrow$ \eqref{eqs4} and \eqref{eqs2} $\Leftrightarrow$ \eqref{eqs5}, which finalizes the proof. \end{proof}

\begin{Obs} \label{closed set hypo}
	When a differential operator with constant coefficients $P$ is $\hypo(\D', \Cinf)$, there exists $s_0\in [1, \infty)$ for which
\begin{equation} \label{hormander Gevrey}
\text{$P$ is $\hypo(\D', G^{s})$ if and only if $s \geq s_0$ (see \cite[Section $2.2$]{rod}).}
\end{equation} 
On the other hand,  if $P$ is $\hypo(\D', G^{t_0})$ for some $t_0 \in [1,\infty)$, it is $\hypo(\D', \Cinf)$. Theorem~\ref{teo1} proves in particular that if $P$ is $\hypo(\D', \Cinf)$, it is also $\hypo(\D_{t_0}', \Cinf)$, for some $t_{0} > 1$, while Theorem~\ref{Theorem 1} shows that this holds for hyperfunctions if and only if $P$ is elliptic. The threshold is given precisely by the constant $s_{0}$ described in \eqref{hormander Gevrey}.  
\end{Obs}

\section{Hypoellipticity for operators which commute with an elliptic differential operator in the smooth context} \label{section4}

For the remainder of the section, we consider $\Omega \subset \R^{n}$ an open set and any differential operator mentioned will have smooth coefficients on $\Omega$.

\begin{Teo} \label{hypo sobolev}
Let $P$ be a differential operator which commutes with  an elliptic differential operator $J$ of order $m \in \N$ and let $r,  t \in \R$ such that $r < t$. Then the following properties hold:
\begin{enumerate}
	\item \label{pure sobolev} If  $P$ is  $\hypo(H_{\loc}^{r}, H_{\loc}^{t})$, then $P$ is  $\hypo(H_{\loc}^{r + km}, H_{\loc}^{t+km})$, for any $k \in \Z$.
	\item \label{property triple hypo} If  $P$ is  $\hypo(H_{\loc}^{r}, H_{\loc}^{t}, \Cinf)$, then $P$ is  $\hypo(H_{\loc}^{r + km}, H_{\loc}^{t+km}, \Cinf)$, for any $k \in \Z$.
\end{enumerate}

\begin{proof}
Let $U$ be an arbitrary open subset of $\Omega$; we start the proof of  \eqref{pure sobolev} when $k$ is a positive integer. If $u \in H_{\loc}^{r + km}(U)$ and $Pu$ is $ H_{\loc}^{t+km}(U)$ then $J^{k}u \in H_{\loc}^{r}(U)$ and  $J^{k}Pu = P(J^{k} u) \in H_{\loc}^{t}(U)$. By hypothesis  $J^{k} u \in H_{\loc}^{t}(U)$; since $J^{k}$ is elliptic, it is a consequence of \cite[Theorem~18.1.29]{hor3} that $u \in H_{\loc}^{t+km}(U)$, which proves statement \eqref{pure sobolev} for positive $k$. 

Now suppose that $k$ is a fixed negative integer; then there exists (see for instance \cite[Corollaries $4.3$ and $4.4$]{tre4}) a properly supported elliptic pseudodifferential operator $E_{k}$ of order $km$   such that $J^{-k} E_{k} = I + R$, where $R$ is a regularizing pseudodifferential operator. Take $u \in H_{\loc}^{r + km}(U)$ such that $Pu$ is $ H_{\loc}^{t+km}(U)$; note that $E_{k}u \in H_{\loc}^{r}(U)$. Moreover, 
\begin{align*}
J^{-k} P(E_{k}u ) =  P(J^{-k} E_{k}u )& = Pu + P(Ru).
\end{align*}
Since $Pu \in H_{\loc}^{t+km}(U)$ and $ P(Ru)\in \Cinf(U)$
it follows that $J^{-k} P(E_{k}u ) \in H_{\loc}^{t+km}(U)$ and $P(E_{k}u ) \in H_{\loc}^{t}(U)$. By hypothesis  $E_{k}u \in H_{\loc}^{t}(U)$; since $E_{k}$ is elliptic as well, we deduce that $u \in H_{\loc}^{t + km}(U)$, as we intended to prove. 

Now we proceed to the proof of \eqref{property triple hypo}; let $k$ be a natural number and $u \in H_{\loc}^{r + km}(U)$ such that $Pu$ is $\Cinf(U)$; then $J^{k}u \in H_{\loc}^{r}(U)$ and  $J^{k}Pu = P(J^{k} u) \in \Cinf(U)$. Since $P$ is  $\hypo(H_{\loc}^{r}, H_{\loc}^{t}, \Cinf)$, one has  $J^{k} u \in H_{\loc}^{t}(U)$. Since $J^k$ is elliptic, $ u \in H_{\loc}^{t+km}(U)$, proving the that $P$ is  $\hypo(H_{\loc}^{r+km}, H_{\loc}^{t+km}, \Cinf)$ when $k$ is positive. 

Next we fix a negative integer $k$, consider once again $E_{k}$ a properly supported elliptic pseudodifferential operator of order $km$ such that $J^{-k} E_{k} = I + R$. If $u \in H_{\loc}^{r + km}(U)$ and $Pu$ is $\Cinf(U)$, then $E_{k}u \in H_{\loc}^{r}(U)$ and $P(E_{k} u) \in \Cinf(U)$. By hypothesis $E_k u \in H_{\loc}^{t}(U)$ and thus $ u \in H_{\loc}^{t + km}(U)$, which finalizes the proof.  
\end{proof}

\end{Teo}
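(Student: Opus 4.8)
The plan is to exploit the fact that $P$ commutes with the elliptic operator $J$, so that applying a suitable power of $J$ (or an elliptic parametrix for it) transports solutions between Sobolev spaces differing by multiples of $m = \ord J$, while elliptic regularity governs how much smoothness one recovers. The argument naturally splits into the case $k \geq 0$, handled directly with $J^k$, and the case $k < 0$, handled with a properly supported elliptic pseudodifferential parametrix $E_k$ of order $km$ satisfying $J^{-k}E_k = I + R$ with $R$ regularizing; both parts then reduce to the hypothesis combined with the elliptic regularity estimate in the form of \cite[Theorem~18.1.29]{hor3}.

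For statement~\eqref{pure sobolev} with $k \geq 0$: given $u \in H_{\loc}^{r+km}(U)$ with $Pu \in H_{\loc}^{t+km}(U)$, I would first observe that $J^k u \in H_{\loc}^r(U)$ since $J^k$ has order $km$, and that $P(J^k u) = J^k(Pu) \in H_{\loc}^t(U)$ by commutativity. The hypothesis $\hypo(H_{\loc}^r, H_{\loc}^t)$ then yields $J^k u \in H_{\loc}^t(U)$, and since $J^k$ is elliptic of order $km$, elliptic regularity upgrades this to $u \in H_{\loc}^{t+km}(U)$. For $k < 0$, replace $J^k$ by the parametrix $E_k$ of order $km$: then $E_k u \in H_{\loc}^r(U)$ and $P(J^{-k}E_k u) = P u + P(Ru)$; since $R$ is regularizing, $P(Ru) \in \Cinf(U) \subset H_{\loc}^{t+km}(U)$, so $J^{-k}P(E_k u) \in H_{\loc}^{t+km}(U)$, whence $P(E_k u) \in H_{\loc}^t(U)$ because $J^{-k}$ is elliptic. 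Applying the hypothesis gives $E_k u \in H_{\loc}^t(U)$ and ellipticity of $E_k$ finishes it. Statement~\eqref{property triple hypo} is essentially the same computation with the target space $H_{\loc}^t$ replaced throughout by $\Cinf$ in the hypothesis slot; the only point worth checking is that in the $k<0$ case $P(Ru)$ lands in $\Cinf(U)$, which is automatic since $R$ is regularizing, so one still gets $P(E_k u) \in \Cinf(U)$ from $J^{-k}$ elliptic.

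I do not expect any serious obstacle: the proof is a clean bookkeeping of Sobolev orders mediated by ellipticity, and the only subtlety is the handling of negative powers $k$, where one must invoke the existence of a properly supported elliptic parametrix $E_k$ (available from standard pseudodifferential calculus, e.g.\ \cite[Corollaries~4.3 and 4.4]{tre4}) rather than literally inverting $J$. The properly supported hypothesis is what makes $E_k u$, $Ru$, and the compositions well defined on an arbitrary open set $U$ without boundary issues. If I were to worry about anything, it would be ensuring that all these operations are genuinely local — that the membership $u \in H_{\loc}^{t+km}(U)$ obtained is over all of $U$ and not just a subset — but since $E_k$ and $R$ are properly supported and the elliptic regularity statement is itself local, this causes no difficulty.
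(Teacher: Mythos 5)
Your proposal matches the paper's own proof essentially line for line: split into $k\geq 0$ (apply $J^k$, commute with $P$, invoke the hypothesis, then elliptic regularity) and $k<0$ (use a properly supported elliptic parametrix $E_k$ with $J^{-k}E_k = I+R$, handle the regularizing remainder, and again invoke the hypothesis plus ellipticity of $E_k$), with statement~\eqref{property triple hypo} handled by the same computation with $\Cinf$ in the target slot. No gaps; this is the argument the authors give, citing the same references for elliptic regularity and the parametrix construction.
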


\begin{Cor}
Let $P$ be a differential operator which commutes with  an elliptic differential operator of order $m$. Assume that $P$ is  $\hypo(H_{\loc}^{r}, H_{\loc}^{t}, \Cinf)$ and $t \geq r + m$. Then $P$ is $\hypo (\D', \Cinf)$.  
\end{Cor}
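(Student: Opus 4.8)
The plan is to bootstrap the hypothesized triple hypoellipticity up and down the Sobolev scale using Theorem~\ref{hypo sobolev}\eqref{property triple hypo}, and then patch the shifted hypotheses together to reach all distributions. Concretely, given $u \in \D'(U)$ with $Pu \in \Cinf(U)$, I would fix an arbitrary point $x_0 \in U$ and work in a relatively compact neighborhood $V \subset\subset U$. On such a $V$, after multiplying by a cutoff, $u$ has finite Sobolev regularity, so there is an integer $k \in \Z$ with $u|_V \in H_{\loc}^{r+km}(V)$; since $Pu|_V \in \Cinf(V)$, applying the shifted triple statement $\hypo(H_{\loc}^{r+km}, H_{\loc}^{t+km}, \Cinf)$ from Theorem~\ref{hypo sobolev} yields $u|_V \in H_{\loc}^{t+km}(V)$.

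The key point is that this is not merely a single improvement but can be iterated, because the condition $t \geq r + m$ guarantees a genuine gain of at least one full step $m$ on the scale at each application: once $u|_V \in H_{\loc}^{t+km}(V) \subseteq H_{\loc}^{r+(k+1)m}(V)$, we may reapply Theorem~\ref{hypo sobolev}\eqref{property triple hypo} with $k$ replaced by $k+1$ to obtain $u|_V \in H_{\loc}^{t+(k+1)m}(V)$, and so on. Since $m \geq 1$ (indeed $J$ has order $m \in \N$, so $m \geq 1$), repeating this procedure indefinitely shows $u|_V \in H_{\loc}^{\ell}(V)$ for every $\ell \in \R$, hence $u|_V \in \bigcap_\ell H_{\loc}^{\ell}(V) = \Cinf(V)$ by the Sobolev embedding theorem. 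As $x_0$ was arbitrary and the conclusion $u \in \Cinf$ is local (Remark~\ref{RemarkHypoLocal}), we conclude $u \in \Cinf(U)$, i.e.\ $P$ is $\hypo(\D', \Cinf)$.

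The main obstacle, and the step requiring a little care, is the very first one: extracting a finite Sobolev order for $u$ on the relatively compact piece $V$. A distribution on $U$ need not have globally finite Sobolev order, but on $V \subset\subset U$ one can fix $\chi \in \Cinf_c(U)$ with $\chi \equiv 1$ near $\overline{V}$; then $\chi u \in \E'(U)$ has compact support and therefore lies in $H^{N}$ for some real $N$ (equivalently in $H_{\loc}^{r+km}(V)$ for a suitably negative integer $k$), which is what feeds the iteration. The rest is bookkeeping: the hypothesis $t \ge r+m$ is used precisely to ensure $t + km \ge r + (k+1)m$, so each round of Theorem~\ref{hypo sobolev}\eqref{property triple hypo} strictly advances the index by a fixed amount and the induction terminates in the smooth class. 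No new idea beyond Theorem~\ref{hypo sobolev} and standard properties of Sobolev spaces is needed.
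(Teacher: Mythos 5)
Your proposal is correct and follows essentially the same bootstrap as the paper: start from a finite Sobolev index on a relatively compact piece via a cutoff, repeatedly apply the shifted triple hypoellipticity of Theorem~\ref{hypo sobolev}\eqref{property triple hypo}, using $t\ge r+m$ to gain at least $m$ on the Sobolev scale at each step, and conclude $u\in\bigcap_\ell H^\ell_{\loc}=\Cinf$ locally. The only difference is cosmetic (you shift by $+km$ with $k$ negative where the paper writes $-km$ with $k\in\N$), and your bookkeeping of the per-step gain is in fact cleaner than the paper's wording at the analogous point.
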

 
\begin{proof}
  Let $U\subset \Omega$ be an open subset and let $u \in \D'(U)$ such that $Pu \in \Cinf(U)$. If $V\subset\subset U$ is a relatively compact open subset, there exists $k \in \N$ for which $u \in H_{\loc}^{r - km}(V)$. Theorem~\ref{hypo sobolev} implies that $u \in H_{\loc}^{t - km}(V)$. Now observe that $u \in H_{\loc}^{r - k (m-1)}(V)$, since $t \geq r + m.$ We infer from Theorem~\ref{hypo sobolev} that $u \in H_{\loc}^{t - k (m-1)}(V)$. By successively applying this argument, we conclude that $u \in H_{\loc}^{r - km+ k\ell}(V)$ for any positive integer $\ell$, which implies that $ u \in \Cinf(V)$. The result follows from the fact that $V$ is arbitrary.
\end{proof} 

\begin{Cor} 
	Let $P$ be a differential operator which commutes with  an elliptic differential operator. If $P$ is  $\hypo(\D', H_{\loc}^{t}, \Cinf)$ for some $t \in \R$, then $P$ is $\hypo(\D', \Cinf)$.
\end{Cor}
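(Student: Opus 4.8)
The plan is to reduce the statement $\hypo(\D', H_{\loc}^{t}, \Cinf) \imply \hypo(\D', \Cinf)$ to the previous corollary by an observation about how one may shrink the target space in a triple. First I would note that $\hypo(\D', H_{\loc}^{t}, \Cinf)$ is a weaker hypothesis than $\hypo(\D', H_{\loc}^{r}, \Cinf)$ for any $r < t$: indeed if $u \in \D'(U)$ and $Pu \in \Cinf(U)$, the triple hypothesis with target $H_{\loc}^{t}$ already gives $u \in H_{\loc}^{t}(U) \subset H_{\loc}^{r}(U)$, so the same $u$ also witnesses the condition with target $H_{\loc}^{r}$. Hence $\hypo(\D', H_{\loc}^{t}, \Cinf)$ implies $\hypo(\D', H_{\loc}^{r}, \Cinf)$ for every $r \le t$.

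Next I would exploit the freedom to choose the source space as well. The property $\hypo(\D', H_{\loc}^{r}, \Cinf)$ trivially implies $\hypo(H_{\loc}^{\rho}, H_{\loc}^{r}, \Cinf)$ for any $\rho \in \R$, since $H_{\loc}^{\rho} \subset \D'$ (this is the triple analogue of condition~\eqref{cond2} in the Proposition on hypoellipticity of pairs). Now let $m$ be the order of the elliptic operator $J$ with which $P$ commutes. Choose $\rho$ small enough that $r \ge \rho + m$; for instance $\rho = r - m$ works. Then $P$ is $\hypo(H_{\loc}^{\rho}, H_{\loc}^{r}, \Cinf)$ with $r \ge \rho + m$, so the hypotheses of the preceding Corollary are satisfied with the pair $(\rho, r)$ in place of $(r,t)$, and we conclude that $P$ is $\hypo(\D', \Cinf)$.

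The only genuine content is the monotonicity remark in the first paragraph — that weakening the target of a triple from $H_{\loc}^{t}$ to a smaller Sobolev space preserves the triple hypoellipticity. This is immediate but worth stating carefully, since the intermediate space in a triple behaves contravariantly compared to the pair case. Everything else is bookkeeping: picking $\rho = r - m$ and invoking the previous Corollary. I do not expect any real obstacle; the proof is essentially two lines once the reduction is spelled out. One should only take care that the Sobolev scale is totally ordered by inclusion, so that the choices of $\rho$ and the inclusion $H_{\loc}^{t} \subset H_{\loc}^{r}$ are legitimate for all real indices, which is standard.
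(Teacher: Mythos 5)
Your proposal is correct, and it matches the route the paper evidently intends (the paper gives no separate argument because the corollary is an immediate consequence of the preceding one). The only thing I would streamline is that your first paragraph — weakening the middle space of the triple from $H_{\loc}^{t}$ to $H_{\loc}^{r}$ with $r\le t$ — is not actually needed: you can take $r=t$ from the start, set $\rho = t-m$, and note directly that $\hypo(\D', H_{\loc}^{t}, \Cinf)$ restricted to $u\in H_{\loc}^{t-m}\subset\D'$ gives $\hypo(H_{\loc}^{t-m}, H_{\loc}^{t}, \Cinf)$ with $t\ge (t-m)+m$, which is exactly the hypothesis of the preceding Corollary. One small caveat on phrasing: the restriction to a smaller source sheaf, $\hypo(\D',H_{\loc}^{r},\Cinf)\Rightarrow\hypo(H_{\loc}^{\rho},H_{\loc}^{r},\Cinf)$, is only meaningful when $\rho\le r$ (the triple requires $\Cinf\subset H_{\loc}^{r}\subset H_{\loc}^{\rho}$), not "for any $\rho\in\R$" as you wrote; your chosen $\rho=r-m$ satisfies this, so the argument goes through, and one should also note $m\ge 1$ so that the strict inequality $\rho< r$ needed for Theorem~\ref{hypo sobolev} holds.
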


\begin{Obs}
It is worth mentioning that if $P$ is $\hypo(H_{\loc}^{r}, H_{\loc}^{t})$, it is also $\hypo(H_{\loc}^{r}, H_{\loc}^{t}, \Cinf)$. Furthermore if $P$ is $\hypo (\D', \Cinf)$ it follows immediately that  $P$ is $\hypo(H_{\loc}^{r}, H_{\loc}^{r+m}, \Cinf)$, for every $r,m \in \R$.
\end{Obs}

\begin{Cor} \label{triple hypo I}
	Let $P$ be a differential operator which commutes with  an elliptic differential operator of order $m \in \N$. Then $P$ is $\hypo (\D', \Cinf)$ if and only if there exists $r \in \R$ for which $P$ is  $\hypo(H_{\loc}^{r}, H_{\loc}^{r+m}, \Cinf)$.  
\end{Cor}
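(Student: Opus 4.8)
The plan is to prove Corollary~\ref{triple hypo I} by combining the two previous corollaries with the trivial observation recorded in the preceding remark. The statement is an ``if and only if'', so I would treat the two directions separately.

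\medskip

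For the forward direction, assume that $P$ is $\hypo(\D', \Cinf)$. The preceding remark already notes that in this case $P$ is automatically $\hypo(H_{\loc}^{r}, H_{\loc}^{r+m}, \Cinf)$ for every $r \in \R$ (indeed, if $u \in H_{\loc}^r(U)$ and $Pu \in \Cinf(U)$, then $u \in \D'(U)$ and so hypoellipticity in the classical sense gives $u \in \Cinf(U) \subset H_{\loc}^{r+m}(U)$). So in particular such an $r$ exists; one may simply take any $r \in \R$.

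\medskip

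For the reverse direction, suppose there exists $r \in \R$ for which $P$ is $\hypo(H_{\loc}^{r}, H_{\loc}^{r+m}, \Cinf)$. Apply the first corollary of this section with $t = r + m$; the hypothesis $t \geq r + m$ is satisfied with equality, so that corollary yields directly that $P$ is $\hypo(\D', \Cinf)$. This closes the equivalence.

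\medskip

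I do not anticipate a genuine obstacle here: the content of the corollary is entirely a repackaging of Theorem~\ref{hypo sobolev} and the first corollary, which do the real work of climbing up and down the Sobolev scale via the elliptic operator $J$ (and its pseudodifferential parametrix in the negative-order case). The only point requiring a little care is making sure the threshold condition $t \geq r + m$ in the first corollary is invoked with the correct choice $t = r+m$, so that the iteration argument there (gaining a full derivative-order $m$ at each step while only needing to gain one order $m-1$ worth to make progress) indeed terminates in $\Cinf$. Once that alignment is checked, the proof is two lines.

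\begin{proof}
If $P$ is $\hypo(\D', \Cinf)$, then by the preceding remark $P$ is $\hypo(H_{\loc}^{r}, H_{\loc}^{r+m}, \Cinf)$ for every $r \in \R$, so such an $r$ certainly exists. Conversely, if $P$ is $\hypo(H_{\loc}^{r}, H_{\loc}^{r+m}, \Cinf)$ for some $r \in \R$, then applying the first corollary of this section with $t = r+m$ (so that $t \geq r+m$ holds) yields that $P$ is $\hypo(\D', \Cinf)$.
\end{proof}
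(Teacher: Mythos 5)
Your proof is correct and is exactly the intended argument: the forward direction is the last sentence of the preceding remark, and the reverse direction is the preceding corollary applied with $t = r + m$, which satisfies the threshold $t \geq r + m$ with equality. The paper leaves this corollary without an explicit proof precisely because it is this immediate repackaging of those two facts.
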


\begin{Cor} \label{operators not hypoelliptic}
Let $P$ be a differential operator which commutes with  an elliptic differential operator of order $m \in \N$. If $P$ is not $\hypo (\D', \Cinf)$, then for every  $j \in \Z$ there exist an open subset $U_{j}\subset \Omega$ and $u_{j}\in H_{\textrm{loc}}^{j }(U_{j})\setminus H_{\loc}^{j+m}(U_{j})$ such that $Pu_{j}\in \Cinf(U_{j})$. In particular, we have the following consequences:
\begin{itemize}
	\item \emph{For every $r \in \R$, there exists an open subset $U_{r}\subset \Omega$ and $u \in \D'(U_{r}) \setminus H_{\loc}^{r}(U_{r})$ such that $Pu \in \Cinf(U_{r})$.}
	\item \emph{For any $n \in \N$, there exists an open subset $V_{n}\subset \Omega$ and $u \in H_{\loc}^{n}(V_{n}) \setminus \Cinf(V_{n})$ for which $Pu \in \Cinf(V_{n})$.  }
	\item  \emph{For each $k \in \N$, there exists an open subset  $W_{k}\subset \Omega$ and $u \in \mathcal{C}^{k}(W_{k}) \setminus \Cinf(W_{k})$ such that $Pu \in \Cinf(W_{k})$. }
\end{itemize}

\end{Cor}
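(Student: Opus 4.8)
The plan is to deduce everything from Theorem~\ref{hypo sobolev} together with a first-step "base case" that manufactures a single failure of regularity at the correct Sobolev level. First I would argue that since $P$ is not $\hypo(\D',\Cinf)$, there exist an open set $U\subset\Omega$, a point $x_0\in U$ and $u\in\D'(U)$ with $Pu\in\Cinf(U)$ but $u$ not smooth near $x_0$. Shrinking to a relatively compact neighborhood $V\subset\subset U$ of $x_0$, the distribution $u$ lies in $H^{\rho}_{\loc}(V)$ for some $\rho\in\R$; choosing $\rho$ as small as we like (at the cost of shrinking $V$) we may assume $\rho<j+m$ for the given $j$, so in particular $u\in H^{\rho}_{\loc}(V)$ while $Pu\in\Cinf(V)$. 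The key point is to locate an open subset where $u$ actually \emph{fails} to be in $H^{j+m}_{\loc}$: if $u$ were in $H^{j+m}_{\loc}$ on a neighborhood of every point of $V$, then by the sheaf property $u\in H^{j+m}_{\loc}(V)$, and one could iterate — applying the positive-$k$ case of Theorem~\ref{hypo sobolev}(\ref{property triple hypo}) with the elliptic operator of order $m$ — to bootstrap $u\in H^{j+m+km}_{\loc}(V)$ for all $k$, hence $u\in\Cinf(V)$, contradicting the choice of $x_0$. (Here I would first need to promote $P$ being $\hypo(\D',\Cinf)$ on the target side to the triple statement $\hypo(H^{r}_{\loc},H^{r+m}_{\loc},\Cinf)$, which is exactly the content of Corollary~\ref{triple hypo I} read in the contrapositive: $P$ not $\hypo(\D',\Cinf)$ forces $P$ to fail $\hypo(H^{r}_{\loc},H^{r+m}_{\loc},\Cinf)$ for \emph{every} $r$, and in particular this is the failure we will exploit.) So some point $y_0\in V$ admits no neighborhood on which $u\in H^{j+m}_{\loc}$.

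Next I would pin down the exact index. Let $\sigma$ be the supremum of all $\tau$ such that $u\in H^{\tau}_{\loc}$ on some neighborhood of $y_0$; by the previous paragraph $\sigma\le j+m$, and $\sigma>-\infty$ since $u$ is a distribution, but $\sigma$ could exceed $j$. This is the place where I would instead run the argument "from above": I want an $H^{j}_{\loc}\setminus H^{j+m}_{\loc}$ function, not merely a distribution that is not $H^{j+m}_{\loc}$. The trick is to apply a fixed elliptic operator of negative order. Concretely, using Theorem~\ref{hypo sobolev} in the negative-$k$ direction is the wrong move (it bootstraps regularity, not roughness); instead I would directly \emph{lower} the regularity of the witness: pick a properly supported elliptic pseudodifferential operator $\Lambda$ of order $j+m-\sigma-\varepsilon'<0$ (or an integer power of $J^{-1}$ adjusted by such a $\Lambda$ as in the proof of Theorem~\ref{hypo sobolev}), observe $\Lambda u$ lies in $H^{\sigma'}_{\loc}$ near $y_0$ for $\sigma'$ slightly above $j$ but fails $H^{j+m}_{\loc}$ there since $\Lambda$ is elliptic hence microlocally invertible (so it neither creates nor destroys wavefront set), and $P(\Lambda u)=\Lambda(Pu)+[P,\Lambda]u$. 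The commutator term is the issue: $[P,\Lambda]u\in H^{\sigma-\mathrm{something}}_{\loc}$ only, which need not be smooth. I would handle this exactly as in Theorem~\ref{hypo sobolev}: since $P$ commutes with the elliptic $J$, one uses $J^{-k}E_k=I+R$ with $R$ regularizing, so that $P(E_k u)=J^{-k}(Pu)-P(Ru)$ modulo smooth, and $J^{-k}(Pu)=J^{-k}(\text{smooth})$ is smooth — thus $P(E_k u)\in\Cinf$ exactly. Choosing $k$ so that $E_k$ has the right negative order lands $E_k u$ in a low Sobolev space; more precisely, once I have \emph{any} witness with smooth $P$-image, Theorem~\ref{hypo sobolev}(\ref{pure sobolev}) and its negative-$k$ half let me shift the Sobolev indices up and down by multiples of $m$, so I can reach the pair $(j,j+m)$ as soon as I can reach \emph{some} pair $(r,r+m)$ — and that is guaranteed by the contrapositive of Corollary~\ref{triple hypo I}.

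Putting it together: by Corollary~\ref{triple hypo I} there is some $r\in\R$ and some open $U\subset\Omega$ with a function $w\in H^{r}_{\loc}(U)\setminus H^{r+m}_{\loc}(U)$ and $Pw\in\Cinf(U)$; localizing near a point where $w\notin H^{r+m}_{\loc}$ and applying the index-shift part of Theorem~\ref{hypo sobolev}(\ref{pure sobolev}) (positive $k$ to raise, negative $k$ via $E_k$ to lower, in each case preserving "$P(\cdot)$ smooth" by the commutation with $J$) produces, for the given $j\in\Z$, an open $U_j\subset\Omega$ and $u_j\in H^{j}_{\loc}(U_j)\setminus H^{j+m}_{\loc}(U_j)$ with $Pu_j\in\Cinf(U_j)$. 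The three bulleted consequences are then immediate: taking $j=r$ and using $H^{j}_{\loc}\supset H^{r'}_{\loc}$ for $r'$ large, but more directly taking $j$ arbitrarily negative gives $u\in\D'(U_r)\setminus H^{r}_{\loc}(U_r)$ with $Pu$ smooth (so the first bullet holds by choosing $j<r$ with $j+m\le r$); taking $j=n$ with $n\in\N$ and noting $u\in H^{n}_{\loc}\setminus H^{n+m}_{\loc}\subset H^{n}_{\loc}\setminus\Cinf$ gives the second; and for the third, a function of class $\mathcal C^{k}$ on a bounded open set lies in $H^{k}_{\loc}$, while the Sobolev embedding $H^{n}_{\loc}\hookrightarrow\mathcal C^{k}$ for $n$ large lets us select $j$ with $k\le j$ and $j+m$ large enough that $u_j\in\mathcal C^{k}(W_k)$ but, being outside $H^{j+m}_{\loc}$, is not smooth. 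The main obstacle throughout is the commutator term in $P(\Lambda u)$ when lowering the regularity of the witness; the resolution is to never use a generic pseudodifferential $\Lambda$ but only the specific $E_k$ supplied by the identity $J^{-k}E_k=I+R$, for which $P(E_k u)$ is smooth exactly because $P$ commutes with $J$ and $R$ is regularizing — this is precisely the mechanism already in the proof of Theorem~\ref{hypo sobolev}, so no new analytic input is needed.
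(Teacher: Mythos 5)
Your final paragraph contains a genuine gap: after correctly noting in the parenthetical that the contrapositive of Corollary~\ref{triple hypo I} yields that $P$ fails $\hypo(H^r_{\loc},H^{r+m}_{\loc},\Cinf)$ \emph{for every} $r\in\R$, you then downgrade this to ``there is some $r$'' and try to reach the prescribed level $j$ by the index-shift mechanism of Theorem~\ref{hypo sobolev}. That mechanism only moves by \emph{multiples of $m$}: from a single witness $w\in H^{r}_{\loc}\setminus H^{r+m}_{\loc}$ you can at best produce witnesses at levels $r+km$, $k\in\Z$, while the statement requires a witness at every $j\in\Z$, which is not generally in the coset $r+m\Z$ when $m>1$. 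So the shifting argument, as stated, does not prove the claim.

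The repair is already on the page and the entire detour should be deleted. The definition of the negation of $\hypo(H^{j}_{\loc},H^{j+m}_{\loc},\Cinf)$ is precisely: there exist $U_j\subset\Omega$ open and $u_j\in H^{j}_{\loc}(U_j)$ with $Pu_j\in\Cinf(U_j)$ but $u_j\notin H^{j+m}_{\loc}(U_j)$. Since the contrapositive of Corollary~\ref{triple hypo I} gives this failure for \emph{every} $r\in\R$, simply instantiate $r=j$ for each $j\in\Z$; no construction of a starting witness and no shifting is needed. The three bullets then follow exactly as you indicate: choose $j$ with $j+m\le r$ for the first (so $H^{r}_{\loc}\subset H^{j+m}_{\loc}$ on $U_j$ and hence $u_j\notin H^{r}_{\loc}$), $j=n$ for the second (since $\Cinf\subset H^{n+m}_{\loc}$), and $j>k+\dim\Omega/2$ for the third (Sobolev embedding $H^{j}_{\loc}\hookrightarrow\mathcal C^{k}$). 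Your first two paragraphs, which attempt a hands-on relocalization and microlocal index tracking, in effect re-derive parts of the machinery already packaged in Corollary~\ref{triple hypo I} and introduce commutator worries that the corollary has already absorbed; they add no content and can be dropped.
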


A   constant coefficient operator on $\R$  commutes with the usual derivative while  on $\R^2$ it commutes with the Cauchy-Riemann operator. In $\R^{n}$, with $n \geq 3$, one needs to consider a second order  elliptic operator such as the Laplace operator. Therefore we have

 \begin{Cor}\label{triple hypo II}
Let $P$ be a constant coefficient operator on $\R^{n}$, $n \in  \N$. Then $P$ is $\hypo(\D', \Cinf)$  if and only if $P$ is $\hypo(L_{\textrm{loc}}^{2},H_{\textrm{loc}}^{2},  \Cinf)$.
\end{Cor}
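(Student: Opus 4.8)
The plan is to derive Corollary~\ref{triple hypo II} as a direct application of Corollary~\ref{triple hypo I}, since a constant coefficient operator always commutes with a suitable elliptic operator and the Sobolev indices in the statement are of the form $(r, r+m)$.

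First I would note that on $\R^n$ a constant coefficient differential operator $P$ commutes with \emph{every} constant coefficient operator; in particular, it commutes with the Laplace operator $\Delta = \sum_{j=1}^n \partial_{x_j}^2$, which is elliptic of order $m = 2$. (On $\R$ one could use $d/dx$ and on $\R^2$ the Cauchy--Riemann operator, both of order~$1$, but to treat all $n$ uniformly it is cleanest to just invoke $\Delta$, which works for every $n \geq 1$.) Thus the hypotheses of Corollary~\ref{triple hypo I} are met with $m = 2$.

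Next I would simply specialize Corollary~\ref{triple hypo I} to this situation. That corollary says $P$ is $\hypo(\D', \Cinf)$ if and only if there exists $r \in \R$ such that $P$ is $\hypo(H_{\loc}^{r}, H_{\loc}^{r+m}, \Cinf)$. With $m = 2$, the ``if'' direction applied to $r = 0$ gives: if $P$ is $\hypo(L_{\loc}^2, H_{\loc}^2, \Cinf)$ then $P$ is $\hypo(\D', \Cinf)$, using $H_{\loc}^0 = L_{\loc}^2$. Conversely, if $P$ is $\hypo(\D', \Cinf)$, then by the ``only if'' direction of Corollary~\ref{triple hypo I} there is \emph{some} $r$ with $P$ being $\hypo(H_{\loc}^r, H_{\loc}^{r+2}, \Cinf)$; but here it is even easier to invoke the Remark following Corollary~\ref{triple hypo I}, which states directly that $\hypo(\D', \Cinf)$ implies $\hypo(H_{\loc}^r, H_{\loc}^{r+m}, \Cinf)$ for every $r, m$ — taking $r = 0$, $m = 2$ yields $\hypo(L_{\loc}^2, H_{\loc}^2, \Cinf)$. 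This establishes the equivalence.

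I do not anticipate a genuine obstacle here: the corollary is a packaging of already-proved results, and the only mild point to be careful about is the identification $H_{\loc}^0 = L_{\loc}^2$ and the observation that the Laplace operator (or $d/dx$, or $\bar\partial$) has the right order in each dimension, which is exactly the content of the paragraph preceding the statement in the excerpt. So the proof is a two-line deduction, and what would otherwise be the ``hard part'' — the Sobolev-shift machinery and the bootstrapping argument — has already been carried out in Theorem~\ref{hypo sobolev} and its corollaries.

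\begin{proof}
A constant coefficient operator $P$ on $\R^n$ commutes with the Laplace operator $\Delta$, which is elliptic of order $m = 2$ (when $n = 1$ one may use $d/dx$ and when $n = 2$ the Cauchy--Riemann operator, both of order $1$; we use $\Delta$ to cover all cases uniformly). Since $H_{\loc}^{0} = L_{\loc}^{2}$, applying Corollary~\ref{triple hypo I} with this elliptic operator and $r = 0$ shows that $P$ is $\hypo(\D', \Cinf)$ if and only if $P$ is $\hypo(L_{\loc}^{2}, H_{\loc}^{2}, \Cinf)$.
\end{proof}
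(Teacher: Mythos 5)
Your proof is correct and follows the paper's own intended route: specialize Corollary~\ref{triple hypo I} (together with the preceding Remark for the easy direction) with $r=0$ and an elliptic operator of order $m$ commuting with $P$. Your choice of the Laplacian uniformly in every dimension, giving $m=2$ so that the indices $(0,2)$ match the statement exactly, is in fact a cleaner instantiation than the paper's prose, which proposes $d/dx$ or the Cauchy--Riemann operator (order $1$) for $n\leq 2$; either works because the bootstrap in Theorem~\ref{hypo sobolev} and its corollaries only needs $t-r\geq m$, but using $\Delta$ throughout avoids the case split.
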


We proceed now to important examples of operators whose coefficients are not constant. 

\begin{Exe}[\emph{Tube type} vector fields] \label{ttvf}
 On $\R^2$, one can always find an elliptic operator of order $2$ which commutes with a vector field of \emph{tube type}, i.e., a vector field with the following form
\begin{equation} \label{tube type}
    L= \frac{\del}{\del x}+  a(x) \frac{\del}{\del y},
\end{equation}
 where $a$ is a smooth function. In fact, given $x_0\in \R$, fix $R > 0$ and let $M>1$ be such that 
 
 \begin{equation*}
M \geq 2 \Big( \sup_{x \in I} |a(x)|^{2} \Big), \textrm{ where } I=[x_{0} - R, x_{0} + R].
 \end{equation*}
Then the operator  $Q = \displaystyle M\left(\frac{\del}{\del y}\right)^{2} + L^2$ is elliptic on $I\times \R$ and commutes with $L$. 
\end{Exe}

Example~\ref{ttvf} may be applied to important families of vector fields such as the following:

\begin{Exe}[The Mizohata's Operators] \label{Mizohata}
Consider the family of vector fields on $\R^{2}$ described by
\begin{equation*}
L_{k} = \frac{\del }{\del x} + ix^{k} \frac{\del }{\del y}, \ \ k \in \N.
\end{equation*}
It is immediate that $L_{k}$ has the form \eqref{tube type}. 
Furthermore $L_{k}$ is elliptic on any  open set that \emph{does not}  intersect $\{(0,y)\in \R^2\}$.  Otherwise its hypoellipticity depends on the parity of $k$. 

\begin{itemize}
	\item When $k$ is even  $L_{k}$ is $\hypo(\D', \Cinf)$, $\hypo(\D', G^{s})$ for $s > 1$ and  $\hypo(\D', \Com)$ (see \cite[pages 164/165]{rod2}). 
	\item When $k$ is odd, $L_{k}$ is not $\hypo (\D', \Cinf )$ (see \cite[Appendice]{miz}), so the statement on Corollary~\ref{operators not hypoelliptic}  holds.
\end{itemize}
\end{Exe}

\begin{Exe}[Okaji's Operators] \label{Okaji}
Consider the family of operators on $\R^{2}$ given by 
\begin{equation*}
P = \left( \frac{\del }{\del x} + ix^{2k} \frac{\del }{\del y}\right)^{2} + c \frac{\del }{\del y} = L_{2k}^{2} + c \frac{\del }{\del y} , 
\end{equation*}
where $k$ is a positive integer and $c$ is a non-zero complex number. 
It was proved in \cite[Theorem 2]{oka} that $P$ is not $\hypo(\D', \Cinf)$ at the origin. Let $Q$ be the operator constructed in Example~\ref{ttvf} associated to $L_{2k}$; it is immediate that $Q$ commutes with $P$. Hence  Corollary~\ref{operators not hypoelliptic} is applicable, where the open subsets taken are once again neighborhoods of the origin.
\end{Exe}

\begin{Exe} [Lewy's Example] \label{Lewy} Consider the example  presented in \cite{l}  of an operator that is not locally solvable (in the $(\D', \Cinf)$ sense) at the origin, given by
\begin{equation*}
L = \frac{1}{2} \left[\frac{\del }{\del x} + i  \frac{\del }{\del y}\right] +  (y - ix) \frac{\del }{\del w}, \ \ \text{with $(x, y, w) \in \R^{3}$.}
\end{equation*} 
Note that $\transp{L} = - L$, thus $\transp{L}$ is not locally solvable at the origin either. This implies in particular that $L$ is \emph{not} $\hypo(\D', \Cinf)$ at $0$ (see \cite[Chapter 52]{tre2}). 

On the other hand, $L$ is a vector field which spans the tangent bundle $\mathcal{V}$ of a \emph{CR structure} (see \cite[Example I.7.1]{tre3}), which is also \emph{locally integrable} (see \cite[Definition I.1.2]{tre3}) on any open set of $\R^{3}$. Next we follow the notation used in \cite[Section I.7]{tre3}: let
\begin{equation*}
M_{1} = \frac{\del}{\del x} - 2ix \frac{\del}{\del w}, \ \ M_{2} = \frac{\del}{\del w}.
\end{equation*}
Then $L$, $M_{1}$ and $M_{2}$ are pairwise commuting vector fields which form a $\Cinf$ basis of $\C T \R^{3}$  over an open neighborhood $U$ of $0$. Then it is possible to prove (see \cite[p.99]{tre3}) that 
\begin{equation} \label{elliptic lis}
P = L^{2} + \kappa \left(M_{1}^2 + M_{2}^{2} \right) \ \text{is \emph{elliptic} on $U$, for a sufficiently large $\kappa > 0$. }
\end{equation}
Since $P$ clearly commutes with $L$, it follows that Corollary~\ref{operators not hypoelliptic} is once again applicable.

\end{Exe}

\begin{Obs}
The argument used in Example $\ref{Lewy}$ can be extended for any vector field $X$ related to a locally integrable structure (i.e. it could be either an $L$ or an $M_{k}$, using the notation established in \cite{tre3}), assuming that $X$ is not $\hypo(\D', \Cinf)$.  
\end{Obs}

\section{Extending  hypoellipticity} \label{section5}

Let $\mathcal{F}, \mathcal{G}$  and $\mathcal{H}$ be three sheaves such that $\mathcal{H}\subset \mathcal{G}\subset \mathcal{F}$. If an operator is $\hypo(\mathcal{G},\mathcal{H})$, it is natural to wonder whether it is also $\hypo(\mathcal{F},\mathcal{H})$. If so, we shall say that the hypoellipticity $\hypo(\mathcal{G},\mathcal{H})$ \emph{extends} to $\mathcal{F}.$ In this section, we intend to study models of operators for which there exists (or not) such an extension property.

\subsection{An one-dimensional model}

We consider  operators defined over an open  interval $I\subset \R$ given by
 \begin{align*}
   P= \sum_{j=0}^{m} a_j(x) \frac{\dd^{j}}{\dd x^{j}}, \ \ \text{where each $a_j\in \Com(I)$.}
 \end{align*}
Now we recall a definition set in \cite{kom2}: let $x_{0} \in I$ be a zero of $a_{m}$; the \emph{irregularity} of $P$ at $x_{0}$ is given by the number
\begin{equation} \label{definition irregularity}
  \sigma= \max \bigg\{ 1, \max_{0 \leq i < m} \frac{ \ord_{x_{0}} a_{m}- \ord_{x_{0}} a_{i}}{m-i} \bigg\},
\end{equation}
where $\ord_{x_{0}}a_i$ denotes the order of vanishing of $a_i$ at $x_{0}$ (note that it may be zero when $a_{i}$ does not vanish at the point).

Komatsu proved in \cite[Theorems E and F]{kom2} the following results:
\begin{equation} \label{result  Komatsu}
\begin{split}
&\text{When $\sigma = 1$, $P$ is $\hypo(\mathcal{B}, \D')$}; \\
&\text{If $\sigma \leq \displaystyle \frac{s}{s-1}$,  $P$ is $\hypo(\mathcal{B}, \D'_{(s)})$.}
\end{split} 
\end{equation}  
Furthermore, Cordaro and Trépreau proved in \cite[Lemma 2.1]{ct} that
\begin{equation}\label{Cordaro-Trepreau result}
\text{for every $ \sigma < \displaystyle  \frac{s}{s-1}$, $P$ is $\hypo(G^{(s)}, \Com)$.}
\end{equation}

For instance, consider the particular case where
\begin{align*}
  P_k= x^{k+1}\frac{\dd}{\dd x} - k, \ \ \text{for some $k \in \N$}. 
\end{align*}
Note that the function $x \mapsto x^{k+1}$ only vanishes at the origin, hence $P_k$ has irregularity  $\sigma= k+1$ at $0$. It is then a consequence of \eqref{Cordaro-Trepreau result} that  $P_k$ is $\hypo(G^{(s)}, \Com)$ for every $s\in \left(1, 1+\frac{1}{k}\right)$. In particular, due to the fact that $G^{t}\subset G^{(s)}$ for any $t<s$, we conclude that 
\begin{equation*}
P_k\text{ is $\hypo(G^{t}, \Com)$ for every $t\in \Big(1, 1+\frac{1}{k} \Big).$}
\end{equation*}
On the other hand, $P_k f=0$ has a solution
\begin{align*}
  f(x)=\left\{
  \begin{array}{cl}
    e^{-1/x^{k}},& \textrm{ if } x > 0;\\
    0,& \textrm{ if } x\leq 0
  \end{array}\right.
\end{align*}
that is clearly not real-analytic and belongs to $G^{1+ \frac{1}{k}}(\R).$ Therefore $P_k$ is not $\hypo(G^{s}, \Com)$ for any $s\geq 1+ \frac{1}{k}$.
Hence, for this class of operators, the hypoellipticity $\hypo(G^{s}, \Com)$ is completely determined by its irregularity and cannot be extended.

\subsection{Operators of tube type in product spaces} \label{tube product}

Let be $V\times U\subset \R^n_t\times \R^m_x$ an open set and consider $P$ a differential operator in $V\times U$ of the form 
\begin{align}\label{PTubeCase}
  P = \sum_{|\alpha|\leq k} \sum_{|\beta|\leq \ell} a_{\alpha \beta}(t) \del_t^{\alpha} \del_x^{\beta},
\end{align}
where each $a_{\alpha \beta}$ is a real-analytic function. Denote
\begin{align}\label{P0TubeCase}
  P_0 = \sum_{|\alpha|\leq k}  a_{\alpha 0}(t) \del_t^{\alpha};
\end{align}
that is, the operator associated when one takes $\beta = 0$, which can be seen as an operator acting on sheaves on $V$. 

Assume that $\mathcal{F}$ and $\mathcal{G}$ are two sheaves for which $P$ induces a morphism. Note that
\begin{align*}
  P(\varphi \otimes 1)= (P_0 \varphi) \otimes 1\in \mathcal{F}(V\times U), \quad \forall \varphi \in \mathcal{F}(V).
\end{align*}
Therefore if $P$ is $\hypo(\mathcal{F},\mathcal{G})$ on $V\times U$, $P_0$ is $\hypo(\mathcal{F},\mathcal{G})$ on $V$. Indeed, if $\varphi \in \mathcal{F}(V_1)$ for some $V_1\subset V$ is such that $P_0 \varphi \in \mathcal{G}(V_1)$, we have
\begin{align*}
 (P_0 \varphi) \otimes 1=   P(\varphi \otimes 1)\in \mathcal{G}(V_1\times U).
\end{align*}
Since $P$ is $\hypo(\mathcal{F},\mathcal{G})$, it follows that $\varphi\otimes 1 \in \mathcal{G}(V_1\times U)$ and therefore $\varphi \in \mathcal{G}(V_1)$.

This discussion shows that the hypoellipticity of $P_0$ is necessary for $P$ to have the same property. We will work under a stronger assumption on $P_0$:
\begin{itemize}
	\item $P_0$ is elliptic on $V$;
	\item $P$ and $P_0$ have the same order. 
\end{itemize} 
We intend to prove that if $P$ is $\hypo(G^{r}, G^{s})$, for some  $r>s$, it is also $\hypo(\D'_s, G^{s})$. That is, the $G^{s}$-hypoellipticity is extendable to ultradistributions of order $s$.

Recall that the  principal symbol of $P$ is given by
\begin{align*}
  p_{m}(t, x, \tau, \xi)= \sum_{k+\ell=m} \sum_{|\alpha|= k} \sum_{|\beta|= \ell} (i)^{|\alpha| + |\beta|} a_{\alpha \beta}(t) \tau^{\alpha} \xi^{\beta}.
\end{align*}
If we denote by $(p_0)_m$ the principal symbol of $P_0$, then
\begin{align*}
  p_{m}(t, x,  \tau, 0)= (i)^{m} \sum_{|\alpha|= m}  a_{\alpha 0}(t) \tau^{\alpha} = (p_0)_m(t, \tau).
\end{align*}
Since $P_{0}$ is elliptic, it follows from the identity above that
\begin{equation} \label{characteristic subset}
\begin{split}
 \Char~P&= \{(t, x, \tau, \xi)\in V\times U \times (\R^{n+m}\setminus\{0\}): p_{m}(t,x, \tau, \xi)=0\}\\
&\subset \{(t, x, \tau, \xi)\in V\times U \times \R^{n+m}: \xi\neq 0 \}.
\end{split}
\end{equation}

Now take $u\in \E'_s(V\times U)$ such that $Pu \in G^{s}(V \times U)$;  thanks to the elliptic regularity theorem, we have
  \begin{align*}
  WF_s(u) \cap \{(t, x, \tau, 0)\in V\times U \times \R^{n+m} \}=\emptyset.
\end{align*}
This means that for every $(t_0,x_0) \in \supp~u$, there exist a neighborhood  $W_{t_0,x_0}$ and  an open cone $\Gamma_{t_0,x_0}\subset\R^{n+m}$ such that $\{(\tau, 0): \tau \in \R^n\setminus\{ 0\}\} \subset \Gamma_{t_0,x_0}$, for which
\begin{align*}
 WF_s(u) \cap (W_{t_0,x_0}\times \Gamma_{t_0,x_0})=\emptyset.
 \end{align*}
Since the support of $u$ is compact, one can find an open cone $\Gamma$ with sufficiently small aperture such that $\{(\tau, 0): \tau \in \R^n\setminus\{ 0\}\} \subset \Gamma$ and
\begin{align}\label{wavefrontsetlongdegamma}
 WF_s(u) \cap (V\times U \times \Gamma)=\emptyset.
 \end{align}
 In particular we may take $\kappa\in (0,1)$ such that
 \begin{align*}
 \Gamma=  \{(\tau, \xi)\in \R^{n+m}: |\tau|> \kappa|\xi|\}.   
 \end{align*}

\begin{Teo} \label{Gevrey hypo implies ultra hypo}
 Consider $r>s>1$ and assume $P$ as described above. If $P$ is $\hypo(G^{r}, G^{s})$, then it is $\hypo(\D'_s, G^{s})$.  
\end{Teo}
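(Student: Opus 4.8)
The plan is to reduce the problem to the hypothesis $\hypo(G^{r},G^{s})$ via a regularization argument in the $x$-variables only, exploiting the tube structure (coefficients depend on $t$ alone) together with the microlocal information \eqref{wavefrontsetlongdegamma}. Suppose $P$ fails to be $\hypo(\D'_s, G^{s})$; then there is an open set and $u\in\D'_s$ with $Pu\in G^{s}$ but $u\notin G^{s}$ near some point $(t_0,x_0)$. By Remark~\ref{RemarkHypoLocal} and the usual localization, we may multiply by a cutoff and assume $u\in\E'_s(V\times U)$, so that the discussion preceding the theorem applies: after shrinking, $WF_s(u)\cap(V\times U\times\Gamma)=\emptyset$ with $\Gamma=\{(\tau,\xi):|\tau|>\kappa|\xi|\}$. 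The goal is to manufacture from $u$ a function $v\in G^{r}$ (on a possibly smaller set) with $Pv\in G^{s}$ but $v\notin G^{s}$, contradicting $\hypo(G^{r},G^{s})$.

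First I would construct an infinite-order (ultradifferential) operator $Q$ acting only in the $x$-variables — schematically $Q=Q(\del_x)$ of type $\{p!^{s}\}$, strongly elliptic in $\xi$ — and set $v=Qu$, or rather $v$ = (the smoothing produced by applying to $u$ a suitable partial-inverse parametrix of such a $Q$), arranged so that $v$ gains Gevrey-$r$ regularity. The reason this should work: because $WF_s(u)$ avoids the cone $\Gamma$ around the $\tau$-axis, $u$ is already Gevrey-$s$ regular in the $t$-directions; the only obstruction to $u$ being $G^{s}$ lives in the $\xi$-directions, i.e. in the $x$-variables. Convolving/filtering in $x$ with an operator of type $\{p!^{s}\}$ (as in Theorems~\ref{PropA1}, \ref{prop a3}, \ref{propdokernel} of the appendix, but performed in the $x$-slice) produces $v$ that is $G^{r}$ (indeed one can push the order arbitrarily close to, but above, $s$) while, crucially, NOT $G^{s}$, because the filter is strongly elliptic and hence its ``kernel obstruction'' is only analytic — so $v$ carries exactly the same $G^{s}$-singularity as $u$. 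Since $P$ has $t$-dependent coefficients only, $P$ commutes with $Q$ (and with the $x$-only parametrix up to an analytic-regularizing error), whence $Pv = Q(Pu) + (\text{analytic})\in G^{s}$. That yields $v\in G^{r}\setminus G^{s}$ with $Pv\in G^{s}$, the desired contradiction.

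The main obstacle — and the step requiring real care — is the tension between two demands on the partial filter in $x$: it must simultaneously (i) smooth $u$ all the way into $G^{r}$ globally in $(t,x)$, using that $WF_s(u)\subset V\times U\times(\R^{n+m}\setminus\Gamma)$, i.e. that the $x$-frequencies $\xi$ are ``not too small relative to $\tau$,'' so that an $x$-only ultradifferential operator of type $\{p!^{s}\}$ genuinely gains regularity on the whole wave front set; and (ii) not destroy the $G^{s}$-singularity, which forces the operator to be strongly elliptic in $\xi$ so that the correction term $Q(Pv)-\dots$ lands in $\Com$ rather than merely in $G^{s}$, via the appendix's kernel results (Remark~\ref{kernel analytic case}, Theorem~\ref{propdokernel}). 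Reconciling (i) and (ii) is where the cone condition $|\tau|>\kappa|\xi|$ on the complement and the ellipticity of $P_0$ (which via \eqref{characteristic subset} guarantees $\Char P$ sits inside $\{\xi\neq 0\}$) must be used quantitatively: one needs the symbol of $Q$ to dominate the relevant frequency directions on $WF_s(u)$ while remaining a bona fide ultradifferential operator. I would handle this by choosing $Q$'s symbol as a function of $|\xi|$ alone, checking that on $(\R^{n+m}\setminus\Gamma)$ one has $|\xi|\gtrsim|(\tau,\xi)|$, so that Gevrey-$s$ decay estimates for $\widehat u$ on that set translate into the required gain, and then invoking the appendix machinery verbatim for the existence of $v\in G^{r}(V\times U)$ with $Qv=u$ in $\D'_s$ and for the analyticity of elements of $\ker Q$. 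The remaining bookkeeping — passing from $\E'_s$ back to germs, shrinking neighborhoods, and verifying that the point $(t_0,x_0)$ witnesses $v\notin G^{s}$ — is routine.
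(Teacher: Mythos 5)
Your proposal is essentially the paper's own argument, reframed as a proof by contradiction (the paper argues directly: take $u\in\D'_{s}$, build $v$ with $Qv=u$, deduce $Pv\in G^{s}$ via the kernel-analyticity of the strongly elliptic $x$-only ultradifferential operator $Q$, conclude $v\in G^{s}$ from $\hypo(G^{r},G^{s})$, and then $u=Qv\in G^{s}$ by continuity of $Q$ on $G^{s}$). All the key ideas — the cutoff to $\E'_{s}$, the microlocal split across the cone $\Gamma$, the $u$-adapted strongly elliptic $Q(\partial_{x})$ of type $\{p!^{s}\}$, the commutation $P\circ Q=Q\circ P$, and inverting $Q$ modulo an analytic kernel — are the ones the paper uses. (Two small slips worth self-correcting: on $\RN\setminus\Gamma$ one has only the $\E'_{s}$ growth bound, not a Gevrey-$s$ decay bound, and the gain comes from dividing by $Q(i\xi)$; and the relation should be $Q(Pv)=P(Qv)=Pu$, with $Pv$ then recovered modulo $\ker Q\subset\Com$, rather than the written "$Pv=Q(Pu)+(\text{analytic})$".)
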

\begin{proof}
Note first that if $P$ is $\hypo(G^{r_0}, G^{s})$ it is also $\hypo(G^{r}, G^{s})$, for any  $s<r<r_0$. Hence one may assume that $s< r< 2s$, which allows us to write 
\begin{equation} \label{expression for mu}
r= \frac{s}{(1-\mu)},\textrm{ for some } \mu \in (0, 1/2 ).  
\end{equation}
Let $u\in \D'_s(V\times U)$ such that $Pu\in G^{s}(V\times U)$; we shall prove that any $(t_0, x_0) \in V\times U$ has a neighborhood $V_0\times U_0$ on which $u \big{|}_{V_0\times U_0} \in G^{s}(V_0 \times U_0)$.
 
Indeed, let $V_0\subset\subset V$ and $U_0\subset \subset U$ be any relatively compact neighborhoods of $t_0$ and $x_0$, respectively. Take $\chi\in G^{s}_c(V\times U)$ such that $\chi \equiv 1$ on $V_0\times U_0$; then $\chi u \in \E'_s(V \times U)$ and for every $\epsilon>0$ there exists  a constant $C_\epsilon>0$ such that
\begin{align}\label{Desultradistribuicao}
  |\widehat{\chi u}(\tau, \xi)|\leq C_\epsilon e^{\epsilon |(\tau,\xi)|^{1/s}}, \quad \forall (\tau,\xi) \in \R^{n+m},
\end{align}
where $\widehat{\chi u}$ represents the Fourier transform of $\chi u$.

Let $c_{1}: (0, \infty) \to  (0, \infty)$ be the non-increasing function given by 
\begin{equation*}
c_{1} (\epsilon) = \inf \left\{M > 0; \ \ |\widehat{\chi u}(\tau, \xi)|\leq M e^{\epsilon |(\tau,\xi)|^{1/s}}, \ \forall (\tau,\xi) \in \R^{n+m} \right\}.
\end{equation*}  
Take a continuous and non-increasing function $c_{2}: (0, \infty) \to  (0, \infty)$  such that 
\begin{equation*}
\text{$c_{2} (\epsilon) \geq c_{1}(\epsilon)$, for every $\epsilon > 0$.}
\end{equation*}
 By setting $C:(0, \infty) \lra (0, \infty)$ as
\begin{equation*}
 C(\epsilon) = c_{2}(\epsilon) + \displaystyle \frac{1}{\epsilon},
\end{equation*}
we have the following facts:
\begin{itemize}
	\item $C$ is strictly decreasing;
	\item $C(\epsilon)> 1$ if $\epsilon\leq 1$;
	\item $C(\epsilon) \lra \infty$ when $\epsilon \lra 0^{+}$; 
	\item the constant $C_{\epsilon}$ in \eqref{Desultradistribuicao} can be taken as $C(\epsilon)$, for every $\epsilon \in (0, +\infty)$. 
\end{itemize}  

 Let $a:= \log C(1)>0$ and consider $\log C: (0,1]\lra [a, \infty)$. Since $\log C$ is strictly decreasing, it is surjective. Therefore, we can take $\gamma: [a, \infty) \lra (0,1]$ to be its inverse, which is strictly decreasing as well, and define $\sigma: (0,\infty) \lra (0, \infty)$ by
\begin{align*}
 \sigma(\rho)=
  \left\{\begin{array}{cl}
          \displaystyle \frac{\log 2}{6} \displaystyle\frac{1}{ \rho^{-\mu}+ \gamma(\rho^{\mu})}, &\textrm{if }  \rho^{\mu} \geq a;\\
           \displaystyle\frac{\log 2}{6} \displaystyle \frac{1}{ a^{-\mu}+ \gamma(a^{\mu})}, &\textrm{if } 0< \rho^{\mu}< a,
         \end{array}
      \right.
\end{align*}
where $\mu$ is the constant defined in \eqref{expression for mu}. Observe that $\sigma$ is a continuous and increasing function satisfying
\begin{align*}
\lim_{\rho \lra \infty} \sigma(\rho) = \infty.  
\end{align*}

Now we can proceed as in the Appendix \ref{Apendice 1} and use $\sigma$ to define the symbol of an ultradifferential operator as
\begin{align*}
  Q(\zeta) = \prod_{p=1}^{\infty} \bigg( 1- \frac{\left\langle \zeta \right\rangle^{2}}{p^{2s} \sigma(p)}\bigg),
\end{align*}
where $\left\langle \zeta \right\rangle^{2}= \zeta_{1}^{2} + \cdots + \zeta_{m}^{2}$ is the holomorphic extension of the norm.
By Lemma~\ref{Operadoresdeordeminfinitaexistem}, $Q$ is an ultradifferential operator of type $\{p!^s\}$ which is strongly elliptic. That is,
\begin{equation} \label{elliptic cone}
|Q(i \zeta)| \geq 1 \ \text{if  $\zeta= \xi + i \eta$ and $|\eta| \leq \frac{|\xi|}{2}$.}
\end{equation}

For each fixed $\xi \in \R^m$, take a positive integer $q_{\xi}$ such that
\begin{equation} \label{intermediate inequality}
q_{\xi}^{2s} \sigma(q_{\xi}) \leq |\xi|^{2} \leq (q_{\xi}+1)^{2s} \sigma(q_{\xi} + 1).
\end{equation}
If $|\xi|$ is sufficiently large, one has $\sigma(q_{\xi}) \geq 1$ and so 
\begin{equation} \label{consequences intermediate inequality}
 \ q_{\xi} \leq |\xi|^{1/s} \textrm{ and } q_{\xi} + 1 \geq \frac{|\xi|^{1/s}}{\sigma(q_{\xi} + 1)^{1/2s}}\geq \frac{|\xi|^{1/s}}{\sigma(|\xi|^{1/s} + 1)^{1/2s}} \geq \frac{|\xi|^{1/s}}{\sigma(|\xi|^{1/s} + 1)}.
\end{equation} 
Therefore, when $|\xi|$ is sufficiently large, we deduce from  \eqref{intermediate inequality} and \eqref{consequences intermediate inequality} that
\begin{align} \label{estimate for Q}
  |Q(i \xi)|\geq \prod_{p=1}^{q_\xi}\bigg| 1+ \frac{|\xi|^{2}}{ p^{2s} \sigma(p)}\bigg| \geq 2^{q_{\xi}} \geq \frac{1}{2} \exp \left[(\log 2) \left(\frac{|\xi|^{1/s}}{\sigma(|\xi|^{1/s} + 1)} \right) \right].
\end{align}

Now consider $A_\kappa=\{(\tau, \xi) \in \R^{n+m}: |\tau|\leq \kappa|\xi|\}$, for some $\kappa\in (0,1/2)$ such that the wave-front set of $\chi u$ satisfies \eqref{wavefrontsetlongdegamma}. Then $|(\tau, \xi)|^{2}= |\tau|^{2}+ |\xi|^{2}\leq (1+\kappa^{2})|\xi|^{2}$, which implies that 
\begin{equation} \label{comparison tau and xi}
|(\tau, \xi)|^{1/s}\leq \sqrt[2s]{1+\kappa^2}|\xi|^{1/s} \leq 2^{1/s} |\xi|^{1/s}.
\end{equation}
Therefore it follows from \eqref{Desultradistribuicao} and \eqref{estimate for Q} that if $|\xi|$ is sufficiently large, then
\begin{align*}
  \bigg| \frac{\widehat{\chi u}(\tau, \xi)}{Q(i\xi)}\bigg| &\leq  2C(\epsilon) \exp \bigg[\epsilon|(\tau, \xi)|^{1/s}-(\log 2) \bigg(\frac{|\xi|^{1/s}}{\sigma(|\xi|^{1/s}+1)} \bigg) \bigg]\\
  &\leq  2 \exp \bigg[\log C(\epsilon)+ \epsilon(1+\kappa^2)^{1/(2s)} |\xi|^{1/s}- (\log 2) \bigg(\frac{|\xi|^{1/s}}{\sigma(|\xi|^{1/s}+1)} \bigg) \bigg].
\end{align*}
Let $\epsilon= \gamma((|\xi|^{1/s}+1)^{\mu})$ and assume that $(|\xi|^{1/s}+1)^{\mu}> a= \log C(1)$. It is a consequence of the definitions of $\gamma$, $\sigma$, \eqref{comparison tau and xi} and last estimate that
\begin{align*}
  \bigg| \frac{\widehat{\chi u}(\tau, \xi)}{Q(i\xi)}\bigg| 
  &\leq  2 \exp\Big[(|\xi|^{1/s}+1)^{\mu}+ \gamma((|\xi|^{1/s}+1)^{\mu})(1+\kappa^2)^{1/(2s)} |\xi|^{1/s}   \\
  &- 6 |\xi|^{1/s}  \big( (|\xi|^{1/s}+1)^{-\mu}+ \gamma((|\xi|^{1/s}+1)^{\mu}) \big) \Big]\\
  &\leq  2 \exp\bigg((|\xi|^{1/s}+1)^{\mu}-  \frac{6 |\xi|^{1/s}}{(|\xi|^{1/s}+1)^{\mu} } \bigg)\\
  &\leq  2 \exp\bigg( 2|\xi|^{\mu/s}-  \frac{3 |\xi|^{1/s}}{|\xi|^{\mu/s} } \bigg).
\end{align*}
Since $\mu < \frac{1}{2}$ (see \eqref{expression for mu}), it follows from \eqref{comparison tau and xi} that
\begin{align*}
\bigg| \frac{\widehat{\chi u}(\tau, \xi)}{Q(i\xi)}\bigg|  &\leq  2 \exp\big(-|\xi|^{(1- \mu)/s}  \big)\\
&\leq  2 \exp\Big( - \frac{|(\tau,\xi)|^{1/r}}{2^{1/r}}  \Big)\\
&\leq  2 \exp\Big( - \frac{|(\tau,\xi)|^{1/r}}{2}  \Big), 
\end{align*}
for every $(\xi, \tau) \in A_{\kappa}$, assuming that $|\xi|$ is large enough. Hence there exists $C_A>0$ such that
\begin{align} \label{estimate A_kappa}
  \bigg| \frac{\widehat{\chi u}(\tau, \xi)}{Q(i\xi)}\bigg|     \leq  C_A e^{-  \frac{1}{2}|(\tau,\xi)|^{1/r} }, \quad \forall ( \tau, \xi) \in A_\kappa.
\end{align}

Next we define, at least formally, the function
\begin{align} \label{formula for v}
  v(t, x) = \frac{1}{(2\pi)^{n+m}} \iint  \frac{\widehat{\chi u}(\tau, \xi)}{Q(i\xi)} e^{i(\tau t+ \xi x)} \dd \tau \dd \xi.
\end{align}
Let $B_{\kappa}= \{(\tau, \xi)\in \R^{n+m}: |\tau|> \kappa|\xi|\}$; as a consequence of \eqref{wavefrontsetlongdegamma}, there exist $h>0$ and $C_B>0$ for which
\begin{align} \label{estimate B_kappa}
  |\widehat{\chi u}(\tau, \xi)|\leq C_B e^{-h|(\tau, \xi)|^{1/s}}, \quad \forall (\tau,\xi) \in B_{\kappa}.
\end{align}
Note that
\begin{align*}
  |e^{i(\tau t_1+ \xi x_1)}-e^{i(\tau t_2+ \xi x_2)}|
     &\leq 2\epsilon \big(|\tau|+ |\xi|\big), 
\end{align*}
whenever $|t_1-t_2|\leq \epsilon$ and $|x_1-x_2|\leq \epsilon$.
 Thus
\begin{align}
  \big|\del_t^{\alpha}\del_x^{\beta}  v(t_1, x_1)-\del_t^{\alpha}\del_x^{\beta}  v(t_2, x_2)\big| &\leq \frac{2\epsilon}{(2\pi)^{n+m}} \iint  \bigg|\frac{\widehat{\chi u}(\tau, \xi)}{Q(i\xi)}\bigg| |\tau^{\alpha}||\xi^{\beta}|(|\tau|+|\xi|) \dd \tau \dd \xi \nonumber \\
  &\leq \frac{2\epsilon}{(2\pi)^{n+m}}( I_{A_\kappa}+ I_{B_\kappa}), \label{splitting integral}
\end{align}
where $I_{A_{\kappa}}$ and $I_{B_\kappa}$ are the respective integrals of
\begin{align*}
  \bigg|\frac{\widehat{\chi u}(\tau, \xi)}{Q(i\xi)}\bigg| |\tau^{\alpha}||\xi^{\beta}|(|\tau|+|\xi|).
\end{align*}

%

Using \eqref{estimate A_kappa}, we deduce that
\begin{align}
I_{A_\kappa} &\leq  \iint_{A_\kappa}  C_A e^{- \frac{1}{2} |(\tau,\xi)|^{1/r} } |\tau^{\alpha}||\xi^{\beta}|(|\tau|+|\xi|) \dd \tau \dd \xi \nonumber\\
&\leq  \iint_{A_\kappa}  C_A e^{- \frac{1}{4} |(\tau,\xi)|^{1/r} } \Big(e^{- \frac{1}{4} |(\tau,\xi)|^{1/r} } |\tau^{\alpha}||\xi^{\beta}|\Big)(|\tau|+|\xi|) \dd \tau \dd \xi. \label{integral a_k}
\end{align}
Analogously, we conclude from \eqref{elliptic cone} and \eqref{estimate B_kappa} that
\begin{align}
I_{B_\kappa} & \leq \iint_{B_\kappa}  C_B e^{-  \frac{h}{2}|(\tau,\xi)|^{1/s} }\Big( e^{-  \frac{h}{2}|(\tau,\xi)|^{1/s} }|\tau^{\alpha}||\xi^{\beta}|\Big)(|\tau|+|\xi|) \dd \tau \dd \xi. \label{integral b_k}
\end{align}
Since for every $t>0$ we have that 
\begin{align*}
   e^{-s t^{1/s}}\leq \frac{j!^{s}}{t^{j}}, \quad \forall j \in \Z_+,
\end{align*}
it follows that in $B_\kappa$ we have the estimate
\begin{align}
  e^{-  \frac{h}{2}|(\tau,\xi)|^{1/s} }|\tau^{\alpha}||\xi^{\beta}|
    &= e^{- s \big(\frac{h^{s}}{(2s)^{s}}|(\tau,\xi)|\big)^{1/s}} |\tau^{\alpha}||\xi^{\beta}| \nonumber \\
    &\leq \frac{(|\alpha|+|\beta|)!^{s}}{ \big(\frac{h^{s}}{(2s)^{s}}|(\tau,\xi)|\big)^{|\alpha|+|\beta|}} |\tau|^{|\alpha|}|\xi|^{|\beta|} \nonumber \\
    &\leq \widetilde{h}^{|\alpha|+|\beta|} (|\alpha|+|\beta|)!^{s}, \label{estimate exponential Gevrey} 
\end{align}
where $\widetilde{h} = \frac{(2s)^{s}}{h^{s}}$. By possibly increasing  $\widetilde{h}$, we similarly obtain  
\begin{align} \label{estimate exponential Gevrey II}
  e^{-  \frac{1}{4}|(\tau,\xi)|^{1/r} }|\tau^{\alpha}||\xi^{\beta}|
      &\leq \widetilde{h}^{|\alpha|+|\beta|} (|\alpha|+|\beta|)!^{r}. 
\end{align}

By associating \eqref{integral a_k} to \eqref{estimate exponential Gevrey II} and \eqref{integral b_k} to \eqref{estimate exponential Gevrey}, we infer that
\begin{align*}
  I_{A_\kappa} & \leq   C_A \tilde{h}^{|\alpha|+|\beta|} (|\alpha|+|\beta|)!^{r} \iint_{A_\kappa} e^{- \frac{1}{4} |(\tau,\xi)|^{1/r} }(|\tau|+|\xi|) \dd \tau \dd \xi \leq   C_{1} \tilde{h}^{|\alpha|+|\beta|} (|\alpha|+|\beta|)!^{r} 
\end{align*}
and
\begin{align*}
  I_{B_\kappa}  & \leq   C_B \tilde{h}^{|\alpha|+|\beta|} (|\alpha|+|\beta|)!^{s} \iint_{A_\kappa} e^{- \frac{h}{2} |(\tau,\xi)|^{1/s} }(|\tau|+|\xi|) \dd \tau \dd \xi \leq   C_{1}\tilde{h}^{|\alpha|+|\beta|} (|\alpha|+|\beta|)!^{s},
\end{align*}
for some $C_{1} > 0$. This proves that $v$ and all its derivatives are continuous,  moreover, $v$ is a function of class $G^{r}$; furthermore, it follows from its formula that
\begin{align*}
  Q(P v)= P(Qv) = Pu \in G^{s}( V_0\times U_0).
\end{align*}
Define then
\begin{align*}
  w(t, x) = \frac{1}{(2\pi)^{n+m}} \iint  \frac{\widehat{\chi P u}(\tau, \xi)}{Q(i\xi)} e^{i(\tau t+ \xi x)} \dd \tau \dd \xi.
\end{align*}
With an analogous computation, it is not difficult to check  that $w\in G^{s}(V\times U)$ and $Qw= \chi Pu$, which implies that 
\begin{align*}
  Q(P v- w)=0 \ \textrm{on} \ V_0 \times U_0.
\end{align*}

Note that $Q$ remains a strongly elliptic ultradifferential operator of type $\{p!^s\}$  if one considers its action on open sets of $\R^{n+m}$ (see \eqref{elliptic cone} and Definition \ref{Strongly elliptic}). Hence it follows from Proposition~\ref{propdokernel} the existence of a neighborhood of $(t_{0}, x_{0})$ such that the restriction of any element of its kernel is a real-analytic function.

Thence there exist a neighborhood $V_1\times U_1$ and $f\in \Com(V_1 \times U_1)$ such that 
$$Pv= f+w\in G^{s}(V_1\times U_1).$$ 
Since $P$ is $\hypo(G^{r},G^{s})$,  it follows that $v\in G^{s}(V_1\times U_1)$.  Due to the fact that $Q$ is continuous on $G^{s}(V_1\times U_1)$, we obtain  $u=Q v\in G^{s}(V_1\times U_1)$, which ends the proof.
\end{proof}

\begin{Obs}[\textbf{Real-Analytic/Hyperfunctions Case}] \label{RA case}
Using hyperdifferential operators it is possible to adapt the previous result to the context of hyperfunctions proving that if $P$ has real-analytic coefficients, $P_0$ is elliptic and has the same order of $P$, then if $P$ is $\hypo(G^r, \Com)$ for some $r>1$, then it is $\hypo(\mathcal{B}, \Com)$.
\end{Obs}

\begin{Exe} \label{Oleinik operators}
Consider  the family of Oleinik operators studied in \cite{ole}, \cite{bt} and \cite{c}, given by
\begin{equation} \label{Lpq operator}
L_{p,q} = \del_{t}^2 + t^{2(p-1)} \del_{x_{1}}^{2} +  t^{2(q-1)} \del_{x_{2}}^{2}, \ \ \ (t, x_{1}, x_{2}) \in \R^{3},
\end{equation}
where $p, q \in \N$ and $p \geq q$. It is immediate that $L_{p,q}$ belongs to the class defined in Subsection \ref{tube product}.  It was proved in   \cite{bt} and \cite{c} that 
\begin{equation*}
L_{p,q} \ \text{is} \ \hypo (\D', G^{s}) \ \text{\emph{is and only if}} \ s \geq \frac{p}{q}.
\end{equation*}
In this context, Theorem \ref{Gevrey hypo implies ultra hypo} and Remark \ref{RA case} provide two different types of conclusions. On the one hand, we actually have 
\begin{equation} \label{oleinik property}
s \geq \frac{p}{q} \ \Longrightarrow \  L_{p,q} \ \text{is} \ \hypo (\D_{s}', G^{s}).
\end{equation}
On the other hand, fix $s < \frac{p}{q}$; for any given $r > s$ there exists an open set $U \subset  \R^{3}$ (depending on $r$) containing the origin and
\begin{equation} \label{oleinik property II}
 g \in G^{r}(U) \setminus G^{s}(U) \textrm{ such that } L_{p,q} g \in G^{s}(U).
\end{equation}

It is worth mentioning that the celebrated \emph{Baouendi-Goulaouic operator} is the particular case where $p = 2$ and $q=1$ (i.e. the operator $L_{2,1}$). In this case we obtain:

\begin{itemize} 
\item $L_{2,1}$ is $\hypo (\D_{s}', G^{s})$ for any $s \geq 2$, by \eqref{oleinik property};
\item for any $s > 1$, there exists an open set $U \subset \R^{3}$ and $f \in G^{s}(U) \setminus \Com(U)$ such that $L_{2,1}u \in  \Com(U)$, as a particular consequence of \eqref{oleinik property II}.
\end{itemize}

\end{Exe}

\begin{Exe}
If we write the elements of $\R^{2n+1}$ as $(\tau, t, x)$, where $\tau \in \R$, $t = (t_{1}, \ldots, t_{n})$ and $x = (x_{1}, \ldots, x_{n})$,  consider the following class  of \emph{Fokker-Planck} operators: 
\begin{equation*}
\mathcal{L} = \frac{\del}{\del \tau} + \sum_{j = 1}^{n} t_{j} \frac{\del}{\del x_{j}} - \alpha(t) \Delta_{t},
\end{equation*}
where $a(t)$ is a strictly positive function in $\Com(\R^{n})$.  Note that $\mathcal{L}$ is indeed a \emph{tube operator} as described in the beginning of this subsection. It was proved in \cite{clx} that
\begin{equation*}
\text{$\mathcal{L}$ is $\hypo(\D', G^{s})$, \emph{for any $s \geq 3$}} 
\end{equation*}
Theorem \ref{Gevrey hypo implies ultra hypo} allows us to conclude that
\begin{equation*}
	\text{$\mathcal{L}$ is $\hypo(\D_{s}', G^{s})$, \emph{for any $s \geq 3$}}.
\end{equation*}

\end{Exe} 

\begin{Exe} We now discuss the \emph{Okaji's operators}, defined in Example~\ref{Okaji}, from another point of view. Notice that is another example of \emph{tube operator} described in the beginning of this section. Moreover, it was proved in \cite{oka} that
\begin{equation*}
\text{$P$ is $\hypo (\D', G^{s} )$ if and only if $1 \leq s \leq \displaystyle \frac{4k}{2k-1}$}. 
\end{equation*}
Theorem \ref{Gevrey hypo implies ultra hypo} and Remark \ref{RA case}  allow us to conclude that
\begin{itemize}
	\item 	$P$ is $\hypo(\D_{s}', G^{s})$, \emph{for any} $s \in \Big( 1,  \displaystyle \frac{4k}{2k-1} \Big]$;
	\item  $P$ is $\hypo(\mathcal{B},\Com)$;
	\item let $t  >  \displaystyle \frac{4k}{2k-1}$ and fix $r > t$. There exists an open set $U \subset \R^{2}$ and 
	\begin{equation*}
	\text{$f \in G^{r}(U) \setminus G^{t}(U)$ such that $P f \in  G^{t}(U)$.}
	\end{equation*}
	
\end{itemize}

\end{Exe}

\begin{Exe} 
Consider the \emph{Tricomi operator} on $\R^{2}$, given by
\begin{equation*}
P = \frac{\del^{2}}{\del t^{2}} + t \frac{\del^{2}}{\del x^{2}}, \ \ (x, t) \in \R^{2}.
\end{equation*}
It was proved in \cite{gz} that $P$ is not $\hypo(\Cinf, \Com)$. It follows in particular that for any fixed $s > 1$ there exists $U \subset \R^{2}$ and 
\begin{equation*}
\text{$u \in G^{s}(U) \setminus \Com(U)$ such that $Pu \in \Com(U)$. } 
\end{equation*}
It is worth mentioning that the authors also mention that $P$ can be microlocally conjugated to an operator $D_{t}$, which clearly satisfies the condition above. 
\end{Exe}

\section{Hypoellipticity for the transpose} \label{section6}

Let $\Omega \subset \R^{N}$ be an open set. Following the notation set in \cite{ch1}, we denote by $\mathfrak{S}(\Omega)$ by the class of linear partial differential operators with real-analytic coefficients which satisfy the following condition: 
\begin{equation} \label{L2 solvability}
\begin{split}
\text{For every} \ U & \subset \subset \Omega \ \text{open, there exists $C> 0$ such that} \\
&\left\|\varphi \right\|_{L^{2}} \leq C \left\|\transp{P} \varphi \right\|_{L^{2}}, \ \ \ \forall \varphi \in \Cinf_{c}(U).
\end{split}
\end{equation}

Condition \eqref{L2 solvability} has interesting implications. If $P \in \mathfrak{S}(\Omega)$, we have the following results:

\begin{itemize}[leftmargin=*]
	\item It is a consequence of Hahn-Banach theorem that for every $V \subset \subset\Omega$, there exists a bounded linear operator $Q_{V}: L^{2}(V) \lra L^{2}(V)$ such that $PQ_{V} = I$ in $L^{2}(V)$;
	\item If $P$ is $\hypo (\D', G^{s})$, then $P$ is $\hypo (\D', \Cinf)$ and $\hypo(\D', G^{r})$, for any $r > s$ (\cite[Section III]{met});
	\item If $P$ is $\hypo (\D', \Com)$, its transpose $\transp{P}$ is $\hypo(\mathcal{B}, L^{2}_{\loc})$ (\cite[Theorem 2]{ch1});
	\item If $P$ is $\hypo (\D', G^{s})$, for some $s > 1$, then $\transp{P}$ is $\hypo (\D_{s}', L_{\loc}^{2})$ \cite[Lemma 2.1]{ch2}. 
\end{itemize}

\begin{Obs}
  For the next result we will apply the Closed Graph Theorem of De Wilde as stated in \cite[(2), p. 57]{kot}: \emph{a sequentially closed linear mapping of an ultrabornological space $E$ into a webbed space $F$ is continuous}.

  In our result the \emph{ultrabornological spaces} will be the following:  
  \begin{itemize}
  \item   Since $G_{c}^{r}(U)$ is a DFS space \cite[Theorem 2.6, p. 44]{kom}, it is a \emph{ultrabornological space} \cite[Remark 10, Proposition 12, p. 60]{nac}.
  \item Since $\Cinf_c(U)$ is an LF-space, it is \emph{complete} \cite[pg. 60]{sw} and \emph{bornological} \cite[Cor. 2, p. 62]{sw}, which implies it is \emph{ultrabornological} \cite[p. 44]{kot}.
  \end{itemize}
  On the other hand, the \emph{webbed spaces} will be the following:  
  \begin{itemize}
  \item   $G^{r}(U)$ is a \emph{webbed space} \cite[(4), p. 55 and (7), (8), p. 63]{kot}.
  \item  $\Cinf(U)$ is a Fréchet space and therefore is a \emph{webbed space} \cite[(4), p. 55]{kot}.
  \end{itemize}

\end{Obs}

Using a similar argument to the one applied for the proof of \cite[Lemma 2.1]{ch2}, we obtain the following result.

\begin{Teo}  \label{weak hypo}
Let $P$ be an element of $\mathfrak{S}(\Omega)$ and suppose $P$ is $\hypo (\D', G^{s})$ for some $s > 1$. Then for any $r > s$ we have that $\transp{P}$ is  $\hypo (\D_{s}', \D_{r}')$. Furthermore, $\transp{P}$ is  $\hypo(\D_{s}', \D')$.
\end{Teo}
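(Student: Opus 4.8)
The plan is to mimic the proof of \cite[Lemma 2.1]{ch2} but with the Gevrey and ultradistribution scales kept track of carefully, using a duality/closed-graph argument. The key point is that the hypothesis $P$ is $\hypo(\D',G^{s})$ produces, for each relatively compact $U$, a continuous linear \emph{right inverse} of $\transp P$ on test function spaces, and that such an operator dualizes to give regularity of solutions of $\transp P u = f$ from $\D_s'$ into $\D_r'$.

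\textbf{Step 1: Localize and reduce.} By Remark~\ref{RemarkHypoLocal} it suffices, given $u \in \D_s'(\Omega)$ with $\transp P u = f \in \D_r'(\Omega)$, to show that each point of $\Omega$ has a neighborhood on which $u$ agrees with an element of $\D_r'$. Fix $x_0 \in \Omega$ and choose $V \subset\subset U \subset\subset \Omega$ open neighborhoods of $x_0$, and a cutoff so that the problem becomes one about $\transp P$ acting on compactly supported data in $U$.

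\textbf{Step 2: Build a continuous right inverse of $\transp P$ on test functions.} Consider the map $\transp P \colon \Cinf_c(U) \to \Cinf_c(U)$; the $L^2$ estimate \eqref{L2 solvability} shows it is injective with closed range. The hypothesis that $P$ is $\hypo(\D', G^{s})$ — hence, by the cited result of M\'etivier, also $\hypo(\D',G^r)$ for $r>s$ — should be leveraged as follows: given $\psi \in G_c^r(U)$, one solves $P w = \psi$ for $w \in \D'$ using the $L^2$-right inverse $Q_V$, and hypoellipticity of $P$ into $G^r$ upgrades $w$ to a $G^r$ function; after a further cutoff one obtains $w \in G_c^r$ on a slightly smaller set with $\transp{(\transp P)} w = Pw = \psi$ there. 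This defines (on appropriate nested neighborhoods) a linear map $S\colon G_c^r \to G_c^r$ — a sequentially closed graph, by the $L^2$ estimate which controls $Sw$ by $\transp P S w$ — between the ultrabornological space $G_c^r(U)$ and the webbed space $G^r(U)$ (and similarly into $\Cinf(U)$ versus $\Cinf_c(U)$), so by the De Wilde Closed Graph Theorem quoted in the Remark, $S$ is continuous. The same construction with $G^r$ replaced by $\Cinf$ yields the "furthermore" assertion $\transp P$ is $\hypo(\D_s',\D')$.

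\textbf{Step 3: Dualize.} With $\chi u \in \E_s'(U)$ (a continuous functional on $G^s(U) \supset G^r(U)$, hence in particular on $G_c^r(U)$ via the continuity of $S$), define the candidate for the restriction of $u$ by $\langle v, \psi \rangle \dfn \langle \chi u, S\psi\rangle$ for $\psi \in G_c^r(V)$; the continuity of $S$ into $G_c^r$ and the fact that $\chi u$ acts continuously there show $v$ is a well-defined element of $\D_r'(V)$. One then checks $\transp P v = u$ on $V$ in the sense of distributions by testing against $\psi$ and using $\transp P S \psi = \psi$, so $v - u$ is annihilated by $\transp P$ near $x_0$; finally, since $\transp P$ differs from $-P$ only in lower order — more to the point, $\transp P$ inherits the analytic-hypoellipticity consequences of the hypothesis (its own solutions are regular enough) — or simply because $v$ itself already lies in $\D_r'(V)$ and $u = v$ there, we conclude $u \in \D_r'(V)$. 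Since $x_0$ was arbitrary, $u \in \D_r'(\Omega)$.

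\textbf{Main obstacle.} The delicate point is Step~2: producing the right inverse $S$ with values in the \emph{compactly supported} Gevrey class $G_c^r$ (so that it pairs with $\chi u \in \E_s'$) while still having a sequentially closed graph between the right pair of spaces for De Wilde's theorem to apply. Controlling supports after the cutoff, and verifying that the $L^2$-estimate \eqref{L2 solvability} genuinely gives closedness of the graph of $S$ rather than merely of $\transp P$, is where the argument must be carried out with care; once $S$ is continuous, the duality in Step~3 is routine, paralleling \cite[Lemma 2.1]{ch2}.
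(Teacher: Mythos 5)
Your plan follows the right general architecture — localize, use the $L^2$ right inverse guaranteed by $\mathfrak{S}(\Omega)$, upgrade it with the closed graph theorem, then dualize — and indeed this is the skeleton of the paper's argument. But there is a genuine gap in Step~3 that the paper's proof addresses and the proposal does not, and it is exactly the point where the Gevrey scales must be tracked carefully.

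You assert that $\chi u \in \E_s'(U)$ is ``a continuous functional on $G^s(U) \supset G^r(U)$, hence in particular on $G_c^r(U)$.'' The inclusion runs the other way: for $r>s$ one has $G^s \subsetneq G^r$, so $\E_s'$ pairs with $G^s$ test functions but \emph{not} with general $G^r$ ones. Your $S$, being $Q$ (a right inverse of $P$, not of $\transp P$ — the construction you describe solves $Pw=\psi$) upgraded by hypoellipticity, lands only in $G^r$, so $\langle \chi u, S\psi\rangle$ is not a priori defined. This mismatch $\E_s'$ versus $G^r$ is the whole difficulty, and your formula $\langle v,\psi\rangle := \langle \chi u, S\psi\rangle$ (which is also missing a $\transp P$; the correct identity is $\langle u, f\rangle = \langle \transp P(\chi u), Qf\rangle$ via $f = PQf$ and $\chi\equiv 1$ on $V$) collapses precisely where it must not.

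The paper's proof handles this by splitting: writing $\transp P(\chi u) = \chi\,\transp P u + v$ with $v := \transp P(\chi u) - \chi\,\transp P u \in \E_s'$ supported in an open set $W \supset \supp d\chi$ with $W \cap \overline V = \emptyset$, one gets
\begin{equation*}
\langle u, f\rangle = \langle \transp P u,\, \chi Q f\rangle + \langle v,\, Qf\rangle .
\end{equation*}
The first term extends to $G_c^r(V)$ because $\transp P u \in \D_r'$ and $\chi Q : G_c^r(V)\to G_c^r(U)$ is continuous (closed graph, using $P$ being $\hypo(\D',G^r)$). The second term is where the real idea lives: since $f$ is supported in $V$ and $W\cap \overline V = \emptyset$, one has $P(Qf) = f \equiv 0$ on $W$, so $\hypo(\D',G^s)$ gives $(Qf)|_W \in G^s(W)$ — \emph{order $s$, not $r$} — and the closed-graph map $\lambda : G_c^r(V)\to G^s(W)$, $f\mapsto (Qf)|_W$, is continuous. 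Only then does the pairing of $v\in \E_s'(W)$ with $Qf$ make sense and give continuity on $G_c^r(V)$. Without this decomposition and the observation that $Qf$ is $G^s$ on the set where $v$ lives, the duality step does not go through.
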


\begin{proof}
We may assume that $\Omega$ is a relatively compact open subset of $\RN$. Indeed, if we prove that $\transp{P}$ is $\hypo(\D'_s, \D'_r)$ on a family of relatively compact open subsets, the property will also hold true for its union. Furthermore, since $P$ belongs to $\mathfrak{S}(\Omega)$, there exists $Q: L^{2}(\Omega) \lra L^{2}(\Omega)$ such that $PQ f= f$ for every $f\in L^{2}(\Omega)$. 

Fix $U\subset \Omega$ and let $u \in \D_{s}'(U)$ be such that $\transp{P}u  \in \D_{r}'(U)$. We intend to show that $u  \in \D_{r}'(U)$. 
Let  $V \subset \subset U$ be an open set and pick $\chi \in G_{c}^{s}(U)$ such that $\chi \equiv 1$ on a neighborhood of $\overline{V}$; let $W$ be an open set of $U$ containing $\supp \dd \chi$ such that $W\cap \overline{V} = \emptyset$.  If $f\in G^{s}_c(V)$, then we have
\begin{align*} 
\langle u, f \rangle &= \langle \chi u, f \rangle 
= \langle \chi u, PQ f \rangle = \langle {\transp{P}} (\chi u), Qf \rangle,
\end{align*}
where we used that $Q( G^{s}_c(U))\subset G^{s}(U) \cap L^{2}(U)$ since $P$ is $\hypo(\D', G^s)$.

Define $v\in \E'_s(U)$ by
\begin{align*}
 v=  {\transp{P}} (\chi u) - \chi \transp{P} u.
\end{align*}
Given any open set $A \subset U \setminus \supp \dd \chi$ we have
\begin{align*}
\langle v, \psi \rangle &= \langle (\chi u), P(\psi) \rangle - \langle u, P(\chi \psi ) \rangle \\
&=  \langle u, \chi P(\psi) \rangle - \langle u, \chi P( \psi ) \rangle \\
&= 0,
\end{align*}
for all $ \psi \in G_{c}^{s}(A).$
Hence $v \in \E'_s(W)$. By definition,
\begin{align*} 
G^{s}_c(V) \ni f  \mapsto u(f)  = \langle {\transp{P}} u, \chi Qf \rangle+ \langle v, Q f\rangle. 
\end{align*}
We shall prove that the maps
\begin{align*}
  G^{r}_c(V) \ni f \mapsto \langle {\transp{P}} u, \chi Qf \rangle,\quad
  G^{r}_c(V)  \ni f \mapsto \langle v,  Qf \rangle,
\end{align*}
are well defined and continuous. Since $V$ is arbitrary, this implies that  $u$ has an extension as an ultradistribution of order $r$.

The fact that $P$ is $\hypo(\D', G^{s})$ implies that $P$ is also $\hypo(\D', G^{r})$ and so $Q( G^{r}_c(U))\subset G^r(U).$ We claim that the map $Q: G^{r}_c(U) \to G^{r}(U)$ is continuous. Let $(f_n, g_n)$ be a sequence \emph{in the graph of $Q$} such that $f_n \lra f_0$ in $G^{r}_c(U)$ and $g_n \lra g_0$ in $G^{r}(U)$. Note that $Q: L^{2}(U) \lra L^{2}(U)$ is continuous so $\left\{Q f_n \right\}_{n \in \N}$ is a sequence in $L^{2}(U)$ which converges to $ Q f_0 \in L^{2}(U)$. Since $Q f_n =g_n$ converges as a  distribution both to $Qf_0$ and $g_0$ it follows that $Qf_0=g_0$.
It follows from Closed Graph Theorem that $Q$ is continuous and therefore
 \begin{align*}
   G^{r}_c(V) \ni f \mapsto \langle {\transp{P}} u, \chi Qf \rangle ~ \text{is \emph{continuous}}.
 \end{align*}


Next recall that $W \subset U$ is such that $\overline{V} \cap W = \emptyset$. Since $f \in G^{r}_c(V)$ one has $P (Q f)= 0$ on $W$. The fact that $P$ is $\hypo(\D', G^{s})$ implies that $(Q f)|_W\in G^{s}(W)$.
Therefore, we have a linear operator $\lambda: G^{r}_c(V) \lra G^{s}(W)$ given by $\lambda(f) = (Qf)|_W$. Once again we apply the Closed Graph theorem to prove that $\lambda$ is continuous. 

Let $(f_n, g_n)$ a sequence in the graph of $\lambda$ such that $f_n \lra f_0$ in $G^{r}_c(V)$ and $g_n \lra g_0$ in $G^{s}(W)$. Since $Q: L^{2}(\Omega) \lra L^{2}(\Omega)$ is continuous,  $ Q f_n$ is a sequence in $L^{2}(\Omega)$ that converges to $ Q f_0 \in L^{2}(\Omega)$. Note that $(Q f_n)|_W= g_n$ therefore $(Q f_0)|_W = g_0$ in $L^{2}(W)$. It follows that $(Q f_0)|_W = g_0$ and by the closed graph theorem $\lambda: G^{r}_c(V) \lra G^{s}(W)$ is continuous. Since $v \in \E'_s(W)$, the composition $v \circ \lambda: G^{r}_c(V) \lra \C$ is an ultradistribution of order $r$, which proves the first statement.

Next we proceed to the distributional case; let $U \subset \Omega$ be an open set and fix $u \in \D_{s}'(U)$ such that $\transp{P}u  \in \D'(U)$. The machinery is similar: let  $V \subset \subset U$ be an open set and pick $\chi \in G_{c}^{s}(U)$ such that $\chi \equiv 1$ on a neighborhood of $\overline{V}$;  take $W$ an open set of $U$ containing $\supp \dd \chi$ such that $W\cap \overline{V} = \emptyset$. If $f\in G^{s}_c(V)$, then we have
\begin{align*} 
	\langle u, f \rangle &= \langle \chi u, f \rangle 
	= \langle \chi u, PQ f \rangle = \langle {\transp{P}} (\chi u), Qf \rangle .
\end{align*}

Define $v\in \E'_s(W)$ by
\begin{align*}
	v=  {\transp{P}} (\chi u) - \chi \transp{P} u.
\end{align*}
It follows from the definition that
\begin{align*} 
	G^{s}_c(V) \ni f  \mapsto u(f)  = \langle {\transp{P}} u, \chi Qf \rangle+ \langle v, Q f\rangle. 
\end{align*} 
We shall prove that
\begin{align*}
	\Cinf_c(V) \ni f \mapsto \langle {\transp{P}} u, \chi Qf \rangle,\quad
	\Cinf_c(V)  \ni f \mapsto \langle v,  Qf \rangle
\end{align*}
are both well defined and continuous.

 Since $P$ is $\hypo(\D', \Cinf$), we analogously obtain an operator 
$$Q: \Cinf_{c}(U) \to \Cinf(U),$$ 
with the same argument previously applied we deduce that $Q$ has closed graph and the Closed Graph Theorem gives that $Q$ is continuous. Hence
\begin{equation*}
\text{$\Cinf_c(V) \ni f \mapsto \langle {\transp{P}} u, \chi Qf \rangle$ is \emph{continuous}. }
\end{equation*}

%
%

Since $W \subset U$ was taken such that $\overline{V} \cap W = \emptyset$, for any $f \in \Cinf_{c}(V)$ one has $P (Q f)= 0$ on $W$. The fact that $P$ is $\hypo(\D', G^{s})$ implies that $(Q f)|_W\in G^{s}(W)$. Thus we have a linear operator 
\begin{equation*}
\begin{split}
\lambda: \Cinf_{c}(V) &\lra G^{s}(W) \\
f &\mapsto (Qf)|_W.
\end{split}
\end{equation*}
In order to show that 
\begin{equation*}
	\Cinf_c(V)  \ni f \mapsto \langle v,  Qf \rangle \ \text{is \emph{continuous}},
\end{equation*} 
it remains to prove that $\lambda$ is continuous. Once again $\lambda$ has closed graph, which allows us to apply once again  the Closed Graph Theorem, finalizing the proof. 
\end{proof}

\subsection{Hörmander's operators} \label{Hormander Operators}
  
Let $\Omega \subset \RN$ be an open set and $P$ an operator in the following form:
\begin{equation} \label{sums of squares}
P = \sum_{j =1}^{\nu} \XX_{j}^{2} + \XX_{0} + f,
\end{equation}
where the following properties hold true: 
\begin{enumerate}[label=\alph*)]
	\item $\XX_{0}, \XX_{1}, \ldots, \XX_{\nu}$ are real-analytic, real vector fields on $\Omega$; \label{RAVF}
	\item $f \in \Com(\Omega)$; \label{AC}
	\item For each $y \in \Omega$, the \emph{Lie algebra} generated by $\XX_{0}, \XX_{1}, \ldots, \XX_{\nu}$ at $y$ is equal $T_{y}\Omega$. \label{LAC}
\end{enumerate}

\begin{Def}
Let $P$ be an operator as in \eqref{sums of squares}, satisfying properties \ref{RAVF}, \ref{AC} and \ref{LAC}. We shall say that $P$ is a \emph{Hörmander's operator}.
\end{Def}

If $P$ is a Hörmander's operator, the same holds for $\transp{P}$ (see \cite[Remark 14]{bra}). It was famously proved by Hörmander in \cite{hor2} that
\begin{equation} \label{usual hypo}
\text{\emph{both $P$ and $\transp{P}$ are $\hypo(\D', \Cinf)$} on $\Omega$.}
\end{equation}
In fact one has (see for instance \cite{k2})) the  following \emph{subelliptic estimate} for $P$ (and hence for $\transp{P}$ as well): there exists $\epsilon > 0$ such that for  every compact $K \subset \Omega$ there exists $C = C(K) > 0$  for which
\begin{equation} \label{subelliptic estimate}
\left\|u \right\|_{\epsilon} \leq C \left(\left\|\transp{P}u \right\|_{L^2} + \left\|u \right\|_{L^2} \right), \ \ \ \forall u \in \Cinf_c(K).
\end{equation} 

By standard arguments (\cite[Thm. 8.7.2]{hor}), one deduces from \eqref{subelliptic estimate} the following: for every $y \in \Omega$, there exists $U  \subset \subset \Omega$ a relatively compact open neighborhood   of $y$ and $C = C(U) >0$ such that
\begin{equation} \label{l2 solv locally}
\|u \|_{L^2} \leq C \left(\left\|\transp{P}u \right\|_{L^2} \right), \ \ \ \forall u \in \Cinf_c(U).
\end{equation}
That is, Hörmander's operators are elements of $\mathfrak{S}(\Omega)$.

\begin{Teo}\label{Gevrey hypo implies weak hypo}
Let $\Omega \subset \RN$ be an open set and $P$ be a Hörmander's operator on $\Omega$. Given $U  \subset \Omega$, assume that $P$ and $\transp{P}$ are $\hypo(\D', G^s)$ in $U$ for some $s>1$, then the following properties hold:
\begin{itemize}
\item Both \emph{$P$} and $\transp{P}$ are $\hypo (\D_{s}', G^{r})$ on $U$, for any $r \geq s;$
\item Both \emph{$P$} and $\transp{P}$ are $\hypo (\D_{s}', \Cinf )$ on $U;$ 
\item  Both \emph{$P$} and $\transp{P}$ are $\hypo (\D_{s}', \D' )$ on $U;$
\item  Both \emph{$P$} and $\transp{P}$ are $\hypo (\D_{s}', \D_{r}')$ on $U$, for any $r > s$. 
\end{itemize}
\end{Teo}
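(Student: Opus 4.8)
\emph{Proof proposal.} The plan is to deduce the four items directly from Theorem~\ref{weak hypo} applied to $P$ and to $\transp{P}$, and then to upgrade the target sheaf by the transitivity of hypoellipticity of pairs recorded in Section~\ref{section2}.

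First I would verify that Theorem~\ref{weak hypo} is applicable to both operators on $U$. Since $P$ is a H\"ormander's operator, the subelliptic estimate yields the local bound \eqref{l2 solv locally}, so $P\in\mathfrak{S}(\Omega)$; because the inequality defining $\mathfrak{S}$ is tested only on functions supported in relatively compact subsets and $U\subset\Omega$ is open, it follows that $P\in\mathfrak{S}(U)$ as well. Moreover $\transp{P}$ is again a H\"ormander's operator, hence likewise $\transp{P}\in\mathfrak{S}(U)$. By hypothesis both $P$ and $\transp{P}$ are $\hypo(\D',G^{s})$ on $U$, so Theorem~\ref{weak hypo} applies to each of them, with $U$ playing the role of $\Omega$.

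Applying Theorem~\ref{weak hypo} to $P$ gives that $\transp{P}$ is $\hypo(\D_s',\D')$ and $\hypo(\D_s',\D_r')$ for every $r>s$ on $U$; applying it to $\transp{P}$, whose transpose is $P$, gives that $P$ is $\hypo(\D_s',\D')$ and $\hypo(\D_s',\D_r')$ for every $r>s$ on $U$. This already establishes the third and fourth items for both operators. For the remaining two I would invoke item~\eqref{cond1} of the Proposition in Section~\ref{section2}, using the subsheaf chain $G^{s}\subset G^{r}\subset\Cinf\subset\D'\subset\D_s'$ (valid for $1<s\le r$, the inclusion $\D'\subset\D_s'$ requiring $s>1$). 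Since $P$ is a H\"ormander's operator it is $\hypo(\D',\Cinf)$ by \eqref{usual hypo}; composing this with $\hypo(\D_s',\D')$ produces $\hypo(\D_s',\Cinf)$, and symmetrically for $\transp{P}$. Finally, the hypothesis $\hypo(\D',G^{s})$ together with \cite[Section III]{met} gives $\hypo(\D',G^{r})$ for every $r>s$ (with $r=s$ being the hypothesis itself), and composing $\hypo(\D_s',\D')$ with $\hypo(\D',G^{r})$ yields $\hypo(\D_s',G^{r})$ for every $r\ge s$, again for both $P$ and $\transp{P}$. This covers all four items.

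I do not expect any analytically hard step here: the substantive work is entirely internal to Theorem~\ref{weak hypo}. The points demanding care are purely organizational: keeping track of which spaces are subsheaves of which so that every pair and triple above is admissible; noting that membership in $\mathfrak{S}$ descends from $\Omega$ to the subdomain $U$; and observing that one genuinely needs $\transp{P}$, not merely $P$, to be $\hypo(\D',G^{s})$, since Theorem~\ref{weak hypo} must be run in both directions to obtain conclusions for $P$ as well as for $\transp{P}$.
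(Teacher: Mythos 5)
Your proposal is correct and follows essentially the same route as the paper's own proof: apply Theorem~\ref{weak hypo} to both $P$ and $\transp{P}$ to obtain the $\hypo(\D_{s}',\D')$ and $\hypo(\D_{s}',\D_{r}')$ conclusions, then upgrade the target sheaf via transitivity using \eqref{usual hypo} and the fact that $\hypo(\D',G^{s})$ propagates to $\hypo(\D',G^{r})$ for $r\ge s$. The only difference is that you make explicit a couple of housekeeping points the paper leaves implicit, namely that membership in $\mathfrak{S}$ restricts to the subdomain $U$ and exactly where the $\mathfrak{S}$-bullet from \cite{met} is invoked.
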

\begin{proof}
  By combining  that $P$ and $\transp{P}$ are $\hypo(\D', G^s)$ in $U$  with Theorem~\ref{weak hypo}, we deduce that
\begin{equation} \label{weak hypo II}
  \begin{split}
    &\text{both \emph{$P$} and $\transp{P}$ are $\hypo (\D_{s}', \D')$ on $U$,} \\
&\text{both \emph{$P$} and $\transp{P}$ are $\hypo (\D_{s}', \D'_{r})$ on $U$, for any $r> s$. } \\
\end{split}
\end{equation}
Using again that that $P$ and $\transp{P}$ are $\hypo(\D', G^s)$ in $U$, it is then a consequence of \eqref{usual hypo} and \eqref{weak hypo II} that
\begin{equation} \label{hypo distribution gevrey}
\begin{split}
&\text{both \emph{$P$} and $\transp{P}$ are $\hypo (\D_{s}', \Cinf )$ on $U$. } \\
  &\text{both \emph{$P$} and $\transp{P}$ are $\hypo (\D_{s}', G^{r})$ on $U$, for any $r\geq s$. }
\end{split}
\end{equation}

\end{proof}

\begin{Exe}
Now given $y \in \Omega$, we may assume that for the same $U$ in \eqref{l2 solv locally} (otherwise we just shrink the neighborhood) there exists  $m_{y} \in \N$ for which 
\begin{equation} \label{Hormander index}
\begin{split}
&\text{\emph{the vector fields $\XX_{0}, \XX_{1}, \ldots, \XX_{\nu}$ and their commutators }} \\
&\text{\emph{of length up to $m_{y}$ generate $T_xU$ for every $x\in U$.}}
\end{split}
\end{equation}
In that case, we have the following fact (\cite[Thm. $1.4$]{bm}):
\begin{equation} \label{gevrey hypo}
\text{both \emph{$P$} and $\transp{P}$ are $\hypo (\D', G^{s})$ on $U$, for any $s \geq m_{y}$. }
\end{equation}
Therefore, we can always apply Theorem~\ref{Gevrey hypo implies weak hypo} for every $s>1$ satisfying $s\geq m_y$.

\begin{Obs}
	It is worth mentioning  that \eqref{hypo distribution gevrey} was obtained for $s\geq m_y$ in \cite{bm} as well . 
\end{Obs}
\end{Exe}

Notice that the Gevrey regularity we just obtained was a consequence of \eqref{gevrey hypo}, a local result whose first version was proved by Derridj and Zuily \cite{dz}. Generally speaking, the Gevrey regularity provided by the number of commutators is not always the best one can find for an arbitrary Hörmander's operator (see \cite[p.2793]{bm}). 

\begin{Exe}
Recall the \emph{Oleinik  operators} presented in Example \ref{Oleinik operators}. By their expression \eqref{Lpq operator}, it is not difficult to check that $\transp{L}_{p,q} = L_{p,q}$. Previously, we proved  that
\begin{equation*}
	L_{p,q} \ \text{is} \ \hypo (\D_{s}', G^{s}) \ \text{\emph{if and only if}} \ s \geq \frac{p}{q}.
\end{equation*}
Suppose that $p/q > 1$. By Theorem \ref{Gevrey hypo implies weak hypo}, we deduce that
\begin{itemize}
	\item 	$L_{p,q}$ is $\hypo (\D_{p/q}', \D_{r}' )$, for any $r > p/q$;
	\item $L_{p,q}$ is $\hypo (\D_{p/q}', \D' )$;
	\item $L_{p,q}$ is $\hypo (\D_{p/q}', \Cinf )$;
	\item $L_{p,q}$ is $\hypo (\D_{p/q}', G^{s} )$, for any $s \geq p/q$.
\end{itemize}
When $p/q = 1$, one can replace $p/q$ by any $\sigma >1$ on the items  above.

\begin{Obs}
In the particular case of the \emph{Baouendi-Goulaouic operator} $p = 2$ and $q=1$ and the results above hold when the starting space is $\D'_2$. 

On the other hand, the authors prove in  \cite[Corollary $1.1$]{ch2} the following result: for any given $1 < t < 2$, there exists $\Omega \subset \R^{3}$ and
\begin{equation*}
\text{$u \in \D_{t}'(\Omega) \setminus \D'(\Omega)$ such that $L_{2,1}(u) = 0$}, 
\end{equation*}
which proves that the result is sharp in the sense that we cannot replace $\D'_2$ by any slightly bigger space of ultradistributions.
%
%
\end{Obs}
\end{Exe}

\begin{Exe} \label{Bove-Mughetti}
	Consider the following class of operators on $\R^{2}$, studied  in \cite{bm}:
	\begin{equation} \label{bove-mughetti}
		P = \frac{\del^{2}}{\del x_{1}^{2}} + x_{1}^{2(q-1)}\frac{\del^{2}}{\del x_{2}^{2}} - x_{1}^{k}  \frac{\del}{\del x_{2}},
              \end{equation}
            where $k, q$ are integers, $k>0$ and $q\geq 2$. 
	The authors prove \cite[Proposition $4.1$]{bm} that $P$ is $\hypo (\D', \Com)$. It is a consequence of Theorem~\ref{Gevrey hypo implies ultra hypo} and Remark \ref{RA case} that
	\begin{equation*}
	\text{$P$ is $\hypo (\mathcal{B}, \Com)$}.
	\end{equation*}
By performing a simple change of coordinates, one proves that the same holds for $\transp{P}$. It is then a consequence of Theorem \ref{Gevrey hypo implies weak hypo} that
\begin{itemize}
	\item both $P$ and $\transp{P}$ are $\hypo (\D_{s}', G^{s})$, for any $s > 1$,
	\item  both $P$ and $\transp{P}$ are $\hypo (\D_{s}', \D'_{r} )$, for any $r > s > 1$,
	\item  both $P$ and $\transp{P}$ are $\hypo (\D_{s}', \D' )$, for any $s > 1$. 
\end{itemize}
By taking into consideration \cite[Cor. 2, p. 336]{ch1}, we also conclude the following:
\begin{itemize}
\item both $P$ and $\transp{P}$ are $\hypo (\mathcal{B}, \Cinf)$,
\item both $P$ and $\transp{P}$ are $\hypo (\mathcal{B}, G^{s})$, for any $s \geq 1$. 
\end{itemize}

\end{Exe}

\section{Gevrey hypoellipticity for a class of systems of vector fields} \label{LIS Gevrey}

In this section, we intend to prove results which are similar to the ones proved in Section~\ref{constant coefficients operators} in the framework of \emph{Gevrey Locally Integrable Structures} (for more details on the subject, we recommend \cite{bch} or \cite{tre3}).

Consider $\mathcal{V}$  a Gevrey involutive  structure of order $s_{0}>1$ and rank $n\in \Z_+$ defined on a neighborhood of the origin $\Omega \subset \R^{n+m}$, that is, each point $x_{0}\in \Omega$ has an open neighborhood $W_{x_{0}}\subset \Omega$ on which one can find $\LL_{1}, \ldots, \LL_{n}$  vector fields of class $G^{s_0}$ defining a frame for $\mathcal{V}|_{W_{x_{0}}}$. The orthogonal of $\mathcal{V}$ with respect to the duality of vector fields and forms is the subbundle $\T$ of $\C T^{\ast} \Omega$ defined by
\begin{align*}
	\T_{x_{0}}= \{ \lambda \in \C T^{\ast}_{x_{0}} \Omega: \lambda(v)=0, \; \forall v \in \mathcal{V}_{x_{0}}\}.
\end{align*}

\begin{Def}
$\mathcal{V}$ is a \emph{Gevrey locally integrable structure of order $s_0>1$} if $\T$ is locally generated by the differential of $m$ linearly independent functions of class $G^{s_0}$.
\end{Def}

It can be proved (see \cite{bch}) that if $(U, x_1, \ldots, x_m, t_1, \ldots, t_n)$ is an open neighborhood of the origin sufficiently small, there exist $Z_1, \ldots, Z_m: U \lra \C$ such that
\begin{align*}
	Z_j= x_j + i \phi_j(t,x),
\end{align*}
where each $\phi_j:U\lra \R$ is a function of class $G^{s_0}$ and
\begin{align*}
	\T|_{U}= \Span \{ \dd Z_1, \ldots, \dd Z_m\}\big|_U.
\end{align*}
In this situation, one can find $\LL_1, \ldots, \LL_n$ generating $\mathcal{V}$ over $U$ and complete to a basis of $\C T \Omega$ over $U$ with vector fields $\MM_1, \ldots, \MM_m$ for which the de Rham operator can be written as
\begin{align*}
	\dd f=  \sum_{j=1}^{n} \left( \LL_j f \right) \dd t_j+ \sum_{k=1}^{m} \left(\MM_k f \right) \dd Z_k, \ \ \ \text{for every $f\in \Cinf(U).$}
\end{align*}

We can now give a concrete representation for the quotient bundle $\C T^{\ast} \Omega/\T$ over $U$ by taking representatives $\dd t_1, \ldots, \dd t_n$. Thus the operator induced by $\dd$ on ultradistributional sections of  $\LLambda \dfn\C T^{\ast} \Omega/\T$ can be represented by $\mathbb{L}: \D_{s_{0}}'(U) \to \D_{s_{0}}'(U; \LLambda^{1})$, where 
\begin{equation*}
	\D_{s_{0}}'(U; \LLambda^{1}) = \bigg\{ \sum_{j=1}^{n} u_{j} \dd t_{j}, \ \ u_{j} \in \D_{s_{0}}'(U) \bigg\}
\end{equation*}
is the space of $1$-ultracurrents generated only by $\dd t_1, \ldots, \dd t_n$. Here the action of $\mathbb{L}$ in $v \in \D_{s_0}'(U)$ is given by

\begin{equation} \label{action of L}
	\mathbb{L}v = \sum_{j=1}^{n} \left(\LL_{j}v \right)\dd t_{j}.
\end{equation}
Note that one may define the concept of hypoellipticity for pairs with respect $\mathbb{L}$ in the same manner we did for operators previously.  

Next we recall certain results proved in \cite{rag}. Given $s > s_{0}$ and assuming that $U$ is a neighborhood of the origin sufficiently small,  there exists a local ultradifferential operator $Q_{s}$ (in order to check its expression, see \cite[Section 9.1]{rag}) satisfying the following conditions:
\begin{enumerate} 
	\item 		 \label{continuity in Gs0}
	The operator $Q_{s}$ acts continuously on $G^{s_{0}}(U)$ and $\D_{s_{0}}'(U)$, (see \cite[Lemma 5.5]{rag});

	\item  \label{commutativity}
	$Q_{s}$ commutes with $\LL_{j}$ in $\D_{s_{0}}'(U)$ for every $ j \in \{1, \ldots, n \}$;
	\item  Each $x_{0} \in U$ has an open neighborhood $U_{x_{0}} \subset \subset U$ such that
	\begin{equation} \label{representation theorem}
		\forall u \in \E_{s}'(U_{x_{0}}), \ \exists f \in G^{s}(U_{x_{0}}); \ Q_{s}f = u \ \text{in $\D_{s_{0}}'(U_{x_{0}})$}
              \end{equation}
              (see \cite[Theorem 6.2]{rag});
	\item Fix $s' > 1$ such that $s_{0} < s' < s$.  Then for each $V \subset U_{x_{0}}$  open neighborhood of $x_0$,  there exists $W \subset \subset V$ an open neighborhood of $x_0$ for which the following property holds true: 
	\begin{equation} \label{regularity kernel}
		u \in \D_{s'}'(V), \ \ Q_{s} u = 0 \ \ \Longrightarrow \ \ u \big{|}_{W} \in G^{s_{0}}(W) 
              \end{equation}
           (see \cite[Proposition 7.2]{rag}).  
              	\item $Q_{s}$ has the following property (see Remark~\ref{remarkGrVaiemGr}): 
	\begin{equation} \label{representation theorem Gs0}
		\text{ if $g \in G^{s_{0}}(U)$, $v \in \D_{s_{0}}'(U)$ and $Q_{s}v = g$, then $v \in  G^{s_{0}}(U)$}
              \end{equation}

\end{enumerate}
It is also worth mentioning that $Q_{s}$ also acts on spaces of ultracurrents, coefficientwise; furthermore, it commutes with $\mathbb{L}$.

\begin{Def}
We say that $u\in \D'_{s_0}(U; \LLambda^{1})$ is \emph{$\mathbb{L}$-closed} if $\mathbb{L}u=0$.
\end{Def}  

The following generalizations are obtained for $\mathbb{L}$-closed forms (see \cite[Section 9]{rag}):
\begin{enumerate}[label=(\alph*)]
	\item \label{property 40}  Each $x_{0} \in U$ has an open neighborhood $U_{x_{0}} \subset \subset U$ such that if $u \in \D'_s(U; \LLambda^{1})$ is $\mathbb{L}$-closed and can be written as $Q_{s}g$, with $g \in G^{s}(U; \LLambda^{1})$, there exists an $\mathbb{L}$-closed one-form $f \in G^{s}(U_{x_0}; \LLambda^{1})$ such that $u = Q_{s}f$ on $U_{x_{0}}$;
	\item \label{property 41}  If $g \in  G^{s_{0}}(U; \LLambda^{1})$ is $\mathbb{L}$-closed and $v \in \D'_s(U; \LLambda^{1})$ is such that $Q_{s}v = g$, then $v \in G^{s_{0}}(U; \LLambda^{1})$;
	\item \label{property 42}   If $u \in \D'_s(U; \LLambda^{1})$ is $\mathbb{L}$-closed and $Q_s u=0$, for any fixed $x_{0} \in U$ there exist an open neighborhood  $W_{x_{0}} \subset U$ and  $v \in G^{s_0}(W_{x_{0}})$ such that $u=\mathbb{L} v$ on $W_{x_{0}}$. 
\end{enumerate}


\begin{Def}
$\mathbb{L}$ is said to be \emph{$(\Cinf, \Cinf)$-locally solvable} on $U$ when the following property holds: for any given $x_0 \in U$ and for any $U_{x_0}\subset U$ open neighborhood of $x_0$, we can find another open neighborhood $V_{x_0}\subset U_{x_0}$  such that
\begin{align*}
&\text{every $ u \in \Cinf(U_{x_0}, \LLambda^{1})$ which is $\mathbb{L}$-closed is $\mathbb{L}$-exact on $V_{x_0}$}. 
\end{align*}
That is, there exists $v \in \Cinf(V_{x_0})$ such that $ \mathbb{L}v = u $ on $V_{x_0}$.
One can define the notion of $\mathbb{L}$ being {\it $(\D', \D')$-local solvable} in degree $1$ in a similar way.
\end{Def}

We are prepared to state and prove the main result of this section.

\begin{Teo} \label{hypo systems}
	Let $\mathcal{V}$ be a $G^{s_0}$ locally integrable structure and  $\mathbb{L}$ the differential operator described in \eqref{action of L}. Then for any $s>s_0$, the following properties are equivalent:
	\begin{enumerate}
		\item $\mathbb{L}$ is $\hypo(\D'_{s}, G^{s_{0}})$; \label{eqs11}
		\item $\mathbb{L}$ is $\hypo(\D', G^{s_{0}})$; \label{eqs22}
		\item $\mathbb{L}$ is $\hypo( \Cinf, G^{s_{0}})$; \label{eqs33}
		\item $\mathbb{L}$ is $\hypo( G^{s}, G^{s_{0}})$. \label{eqs44}
	\end{enumerate}
	Furthermore, if $\mathbb{L}$ it is $(\Cinf,\Cinf)$-locally solvable in degree $1$, any of the conditions above imply the following properties:
	\begin{enumerate}\setcounter{enumi}{4}
		\item $\mathbb{L}$ is $\hypo( \D'_{s}, \D')$; \label{eqs55}
		\item $\mathbb{L}$ is $\hypo( \D'_{s}, \Cinf)$. \label{eqs66}
	\end{enumerate}
\end{Teo}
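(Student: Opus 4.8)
\medskip
\noindent\emph{Plan of proof.}\quad The plan is to establish the equivalence of \eqref{eqs11}--\eqref{eqs44} by reducing everything to the single implication \eqref{eqs44} $\Rightarrow$ \eqref{eqs11}, which I would handle by contraposition through the local ultradifferential operator $Q_{s}$; the last two items then follow from a \emph{solve and subtract} argument based on local solvability. The scheme parallels the proof of Theorem~\ref{teo1}, with $Q_{s}$ and local solvability playing the roles there taken by an ultradifferential operator and a fundamental solution, and with the representation/kernel facts for $Q_{s}$ quoted from the list of properties in this section.

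Since $G^{s_{0}}\subset G^{s}\subset\Cinf\subset\D'\subset\D'_{s}$ and hypoellipticity with a fixed target sheaf is monotone in the source sheaf (the Proposition in Section~\ref{section2}), one gets at once \eqref{eqs11} $\Rightarrow$ \eqref{eqs22} $\Rightarrow$ \eqref{eqs33} $\Rightarrow$ \eqref{eqs44}. For the converse \eqref{eqs44} $\Rightarrow$ \eqref{eqs11}, assume $\mathbb{L}$ is not $\hypo(\D'_{s},G^{s_{0}})$; then there are an open set $U$, a point $x_{0}\in U$ and $u\in\D'_{s}(U)$ with $\mathbb{L}u\in G^{s_{0}}(U;\LLambda^{1})$ whose germ at $x_{0}$ lies in no $G^{s_{0}}$ class. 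Shrinking to a neighborhood $U_{x_{0}}\subset\subset U$ on which $Q_{s}$ and all its properties are available, choose $\chi\in G^{s}_{c}(U_{x_{0}})$ with $\chi\equiv 1$ on a neighborhood $V$ of $x_{0}$; then $\chi u\in\E'_{s}(U_{x_{0}})$, so by \eqref{representation theorem} there is $h\in G^{s}(U_{x_{0}})$ with $Q_{s}h=\chi u$ in $\D'_{s_{0}}(U_{x_{0}})$. Because $Q_{s}$ commutes with $\mathbb{L}$ and $\chi\equiv 1$ on $V$, we get $Q_{s}(\mathbb{L}h)=\mathbb{L}(\chi u)=\mathbb{L}u$ on $V$. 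Reading this coefficientwise, each coefficient of $\mathbb{L}h$ lies in $G^{s}\subset\D'_{s_{0}}$, each coefficient of $\mathbb{L}u$ lies in $G^{s_{0}}$, and \eqref{representation theorem Gs0} (valid on $V$ by locality of $Q_{s}$) forces each coefficient of $\mathbb{L}h$ into $G^{s_{0}}$; hence $\mathbb{L}h\in G^{s_{0}}(V;\LLambda^{1})$. Now $h\in G^{s}(V)$ and $\mathbb{L}h\in G^{s_{0}}(V;\LLambda^{1})$, so hypothesis \eqref{eqs44} yields $h\in G^{s_{0}}(V)$, and the continuity of $Q_{s}$ on $G^{s_{0}}$ (\cite[Lemma 5.5]{rag}) gives $u=\chi u=Q_{s}h\in G^{s_{0}}(V)$, contradicting the choice of $x_{0}$. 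This proves \eqref{eqs11}--\eqref{eqs44}.

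For the remaining items, suppose in addition that $\mathbb{L}$ is $(\Cinf,\Cinf)$-locally solvable in degree $1$. By the standard duality between local solvability and a priori estimates for the de Rham complex of a locally integrable structure (cf. \cite{tre3,bch}), $\mathbb{L}$ is then also $(\D',\D')$-locally solvable in degree $1$. Let $\mathcal{G}$ denote either $\Cinf$ or $\D'$, let $U$ be open and let $u\in\D'_{s}(U)$ satisfy $\omega\dfn\mathbb{L}u\in\mathcal{G}(U;\LLambda^{1})$; note that $\omega$ is $\mathbb{L}$-closed, as $\mathbb{L}$ is induced by the de Rham differential. Given $x_{0}\in U$, $(\mathcal{G},\mathcal{G})$-local solvability provides a neighborhood $V$ of $x_{0}$ and $v\in\mathcal{G}(V)$ with $\mathbb{L}v=\omega$ on $V$, so that $u-v\in\D'_{s}(V)$ and $\mathbb{L}(u-v)=0\in G^{s_{0}}(V;\LLambda^{1})$. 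By \eqref{eqs11} we obtain $u-v\in G^{s_{0}}(V)\subset\mathcal{G}(V)$, whence $u=v+(u-v)\in\mathcal{G}(V)$. Since $x_{0}$ is arbitrary, Remark~\ref{RemarkHypoLocal} gives $u\in\mathcal{G}(U)$; the choice $\mathcal{G}=\Cinf$ yields \eqref{eqs66} and $\mathcal{G}=\D'$ yields \eqref{eqs55}.

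I expect the main difficulty to be twofold. On the technical side, one must arrange the nested neighborhoods so that \eqref{representation theorem}, the continuity of $Q_{s}$ on $G^{s_{0}}$, and \eqref{representation theorem Gs0} (together with the corresponding statements for $\mathbb{L}$-closed one-forms, if one prefers to argue with forms rather than coefficientwise) all hold on a single neighborhood of $x_{0}$. More substantively, the only genuinely non-formal input is the upgrade from $(\Cinf,\Cinf)$- to $(\D',\D')$-local solvability used for \eqref{eqs55}; this has to be argued via the transpose operator and its a priori estimate rather than simply read off from the hypothesis.
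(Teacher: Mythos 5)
Your proof is correct and follows essentially the paper's scheme: the easy chain \eqref{eqs11}$\Rightarrow$\eqref{eqs22}$\Rightarrow$\eqref{eqs33}$\Rightarrow$\eqref{eqs44} by monotonicity of the source sheaf; the hard implication \eqref{eqs44}$\Rightarrow$\eqref{eqs11} by contraposition through the representation operator $Q_s$; and \eqref{eqs55}, \eqref{eqs66} by solve-and-subtract using local solvability (the paper also invokes the $(\Cinf,\Cinf)\Leftrightarrow(\D',\D')$ equivalence, citing \cite{rag} for it directly). The one genuine difference is the middle of your argument: after reaching $Q_s(\mathbb{L}h)=\mathbb{L}u$ on $V$, you apply the \emph{scalar} kernel/regularity statement \eqref{representation theorem Gs0} coefficientwise to conclude $\mathbb{L}h\in G^{s_0}(V;\LLambda^1)$, whereas the paper routes this through the three \emph{$\mathbb{L}$-closed one-form} properties \ref{property 40}--\ref{property 42}: it first replaces the $Q_s$-preimage of $\mathbb{L}u$ by an $\mathbb{L}$-closed $G^{s}$ form, upgrades that to $G^{s_0}$, and then solves the kernel equation $Q_s(g-\mathbb{L}\varphi)=0$ with a $G^{s_0}$ potential. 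Both paths land on the same conclusion; yours is shorter. Two remarks. First, your reliance on \eqref{representation theorem Gs0} at the $G^{s_0}$ level is supported only by Remark~\ref{remarkGrVaiemGr}, whose sketch there explicitly treats $s_0<r<s$ rather than $r=s_0$; if you want a step that is unambiguously sanctioned by the references, it is even cleaner to apply Property~\ref{property 41} in one stroke, with $g=\mathbb{L}u$ and $v=\mathbb{L}h$ (both $\mathbb{L}$-closed, $\mathbb{L}h\in G^s\subset\D'_s$), which bypasses the scalar statement entirely. Second, your attribution of the $(\Cinf,\Cinf)\Leftrightarrow(\D',\D')$ local-solvability equivalence to a duality argument in \cite{tre3,bch} is looser than the paper's, which points to \cite{rag} for this precise result; the substance of the solve-and-subtract argument is otherwise identical.
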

\begin{proof}
  For the first part of the theorem, we only need to prove that \eqref{eqs44} implies \eqref{eqs11}. 
  So let us assume  that \eqref{eqs11} fails.   
  Therefore, there exists an open set $U_0 \subset U$,
	\begin{equation*}
		\text{ $u \in \D'_{s}(U_0) \setminus G^{s_{0}}(U_0)$  and $f \in G^{s_{0}}(U_0; \LLambda^{1})$ such that $\mathbb{L}u = f$.}
	\end{equation*}
Therefore, there is $x_0\in U_0$ such that the germ of $u$ at $x_0$ is not a $G^{s_0}$ function on any neighborhood of $x_0$. 
        
Let $V \subset \subset  U_0$ be an open neighborhood of $x_0$. Pick $\chi \in G_{c}^{s}(U_0)$ such that $\chi \equiv 1$ on $V$ and define $v = \chi u \in \E_{s}'(U_0)$. As a consequence of \eqref{representation theorem}, there exists  $\varphi \in  G^{s}(U_0)$ such that $Q_{s} \varphi = v$. Since $Q_{s}$ is local and acts continuously on $G^{s_{0}}$ and $v\in \mathcal{D}'_{s}(V) \setminus G^{s_0}(V)$ it follows that $\varphi$ is not of class $G^{s_{0}}$ on any neighborhood of $x_0$.
Moreover,
\begin{align*}
  f \big{|}_{V} = \mathbb{L} u\big{|}_{V}= \mathbb{L} v\big{|}_{V} = \mathbb{L} (Q_{s} \varphi)\big{|}_{V} = Q_{s} \left(\mathbb{L} \varphi \big{|}_{V} \right).
\end{align*}

It follows from Property~\ref{property 40} the existence of $W \subset \subset V$ an open neighborhood of $x_0$ and an $\mathbb{L}$-closed form $g \in G^{s}(W; \LLambda^{1})$ such that
\begin{equation*}
 f = Q_{s}g \ \text{on $W$}  
\end{equation*}	
Statement~\ref{property 41} implies that $g \in G^{s_{0}}(W; \LLambda^{1})$.
Hence $ g - \mathbb{L} \varphi$ is an $\mathbb{L}$-closed form in  $G^{s}(W; \LLambda^{1})$ such that $Q_{s} ( g - \mathbb{L} \varphi) = 0$. Finally, by Property~\ref{property 42} there exists $W'\subset W$ another open neighborhood of $x_0$  and $h \in G^{s_{0}}(W')$ for which
\begin{equation*}
 \ \ \mathbb{L}h = g - \mathbb{L} \varphi \ \ \text{on $W'$} \ \ \Rightarrow \  \mathbb{L} \varphi = g - \mathbb{L}h \in   G^{s_{0}}(W'; \LLambda^{1}),
\end{equation*}
and since $\varphi$ is not of class $G^{s_0}$ near $x_0$ \eqref{eqs44} fails as well.

	The next step is to show that \eqref{eqs11} implies  \eqref{eqs55} and \eqref{eqs66}.  First we recall that it was proved in \cite{rag} that local solvability in the $(\Cinf, \Cinf)$ sense is equivalent to local solvability in the $(\D^{'}, \D^{'})$ sense. Let us prove that condition~\eqref{eqs55} holds; suppose that $ V \subset U$ is an open set, $u \in \D'_{s}(V)$ and $f \in \D'(V; \LLambda^{1})$ are such that $\mathbb{L} u = f$. Given $x_{0} \in V$, the $(\D', \D')$-local solvability  of $\mathbb{L}$ implies that  we can find $U_{x_{0}} \subset V$ an open neighborhood of $x_0$  and $\omega \in \D'(U_{x_{0}})$ such that $\mathbb{L}\omega = f$ in $U_{x_{0}}$. Thus
	\begin{equation*}
		\mathbb{L}(u - \omega) = 0 \ \Longrightarrow \ u - \omega \in G^{s_{0}}(U_{x_{0}}) \ \Longrightarrow \ u \in  \D'(U_{x_{0}}).
	\end{equation*}
	 
	Since this argument can be applied for each $x_{0} \in V$,  $u \in \D'(V)$, proving that \eqref{eqs55} holds true. Analogously we verify \eqref{eqs66}, which finalizes the proof. 
\end{proof}

\begin{Exe}
Consider once again the family of Mizohata vector fields on the plane,  described by
\begin{equation*}
	\LL = \frac{\del }{\del x} + ix^{k} \frac{\del }{\del y}, 
      \end{equation*}
 for any integer $k\geq 1$. 
Let $\mathcal{V}$ be the real-analytic locally integrable structure generated by $\LL $, whose first integral is given by
\begin{equation*}
Z_{k}(x,y) = \frac{x^{k+1}}{k+1} + iy, \ \ \forall (x,y) \in \R^{2}.  
\end{equation*} It was mentioned in Example \ref{Mizohata} that
\begin{equation*}
\text{\emph{when $k$ is even, $\LL $ is $\hypo(\D', G^{s})$, for  any $s \geq 1$}.}
\end{equation*}
As a consequence of Theorem \ref{hypo systems}, one has the following properties:
\begin{itemize}
	\item For any $s > 1$ and $r > s >1$,  $\LL $ is $\hypo(\D_{r}', G^{s})$;
	\item For any $s >1$, $\LL $ is $\hypo(\D_{s}', \Com)$ (since any real-analytic function is $G^{s}$ and $\LL $ is $\hypo(\D', G^{s})$).
\end{itemize}
It is worth noting that the results which follow from Theorem \ref{Gevrey hypo implies ultra hypo} and Remark \ref{RA case} are stronger: in fact, one has
\begin{itemize}
	\item For any $s > 1$,  $\LL $ is $\hypo(\D_{s}', G^{s})$;
	\item  $\LL $ is $\hypo(\mathcal{B}, C^{\omega})$ .
\end{itemize}

Nevertheless it is a consequence of the \emph{Nirenberg-Treves condition} (see \cite{nt1}, \cite{nt2}) that $\LL $ is \emph{$(\Cinf, \Cinf)$-locally solvable} on $\R^{2}$, which implies that
\begin{itemize}
	\item For any $s > 1$,  $\LL $ is $\hypo(\D_{s}', \D')$;
	\item For any $s >1$,  $\LL $ is $\hypo(\D_{s}', \Cinf)$.
\end{itemize}

On the other hand, it follows from \cite[Theorem  $2.3.6$]{rod} that 
\begin{equation*}
\text{\emph{when $k$ is odd, $\LL $ is \textbf{not} $\hypo(\D', G^{s})$, for  any $s \geq 1$}.}
\end{equation*}
Then Theorem \ref{hypo systems} implies that
\begin{itemize}
	\item Given $s > 1$ and $r > s$, there exist $\Omega \subset \R^{2}$ and $g \in G^{r}(\Omega) \setminus G^{s}(\Omega)$  such that $\LL g \in G^{s}(\Omega)$.
\end{itemize}
Note that the result may obtained as well from Theorem \ref{Gevrey hypo implies ultra hypo}.  Furthermore one can also conclude from Theorem \ref{Gevrey hypo implies ultra hypo} that
\begin{itemize}
	\item For any $s > 1$, there exist $\Omega \subset \R^{2}$ and $g \in G^{s}(\Omega) \setminus \Com(\Omega)$  such that $\LL g \in \Com(\Omega)$.
\end{itemize}
\end{Exe}

\begin{Exe} \label{key}
We consider now the following generalization of the Mizohata structure: real-analytic locally integrable structures of \emph{tube type} and \emph{corank one} (for more details on the subject we recommend \cite{tre3}). In this context: 
\begin{itemize}
	\item $\Theta$ is an open ball centered at the origin in $\R^{n}$;
	\item  $\phi: U \to \R $ is a real-analytic map such that $\phi(0) = 0$, where $U$ is an open neighboorhood of $\overline{\Theta}$;
	\item The bundle $\mathcal{V}$ is defined on $\R \times \Theta \ni (y, x)$  and is generated by the vector fields
	\begin{equation*}
	\LL_{j} = \frac{\del}{\del x_{j}} - i  \frac{\del \phi}{\del x_{j}} (x) \frac{\del}{\del y}, \ \ \ j = 1, \ldots, n. 
	\end{equation*}
\end{itemize}

Now suppose that the following condition:
\begin{equation*} 
\text{$\phi: \Theta \to \R$ is an \emph{open map}}.
\end{equation*}
In that case, it was proved in \cite{mai}, \cite{batr} and \cite{ccp} that the operator $\mathbb{L}$ (see \eqref{action of L}) is $\hypo (\D', \Cinf)$, $\hypo (\D', \Com)$ and  $\hypo (\D', G^{s})$ for any  $s >1$, respectively. It follows from Theorem~\ref{hypo systems} that
\begin{itemize}
	\item For any $s > 1$ and $r > s >1$,  $\mathbb{L}$ is $\hypo(\D_{r}', G^{s})$;
	\item For any $s >1$, $\mathbb{L}$ is $\hypo(\D_{s}', \Com)$.
\end{itemize}

\

Consider next the particular case given by
\begin{equation} \label{generalization mizohata}
	\LL_{j} = \frac{\del}{\del x_{j}} + i x_{j}^{k_{j}} \frac{\del}{\del y}, \ \ \ k_{j} \in \Z, \ \ j = 1, \ldots, n. 
\end{equation}
Then $\phi(x) = - \displaystyle \sum_{j = 1}^{n} \frac{x_{j}^{k_{j}+ 1}}{k_{j}+ 1}$. Now $\phi$ is an open map if and only if it has no local minimum or maximum. Hence we deduce the following: suppose that there exists at least one $k_{j}$ in \eqref{generalization mizohata} which is even, then $\mathbb{L}$ is $\hypo(\D', G^{s})$ for any $s\geq 1$.
Therefore  once again one concludes from Theorem~\ref{hypo systems} that
\begin{itemize}
	\item For any $s > 1$ and $r > s >1$,  $\mathbb{L}$ is $\hypo(\D_{r}', G^{s})$.
	\item For any $s >1$, $\mathbb{L}$ is $\hypo(\D_{s}', \Com)$.
\end{itemize}

\end{Exe}

\begin{Exe} Consider the following locally integrable structure on $\R^{4}$, studied by H. M. Maire in \cite{mai}: the vector fields which generate the bundle $\mathcal{V}$ are given by
\begin{equation*}
\begin{split}
\LL_{1} &= \frac{\del }{\del x_{1}} + i \left(3 \frac{\del }{\del y_{1}}- (4x_{1}x_{2} + 3)x_{1}^{2}\frac{\del }{\del y_{2}}\right), \\
\LL_{2} &= \frac{\del }{\del x_{2}} - ix_{1}^{4} \frac{\del }{\del y_{2}},
\end{split}
\end{equation*}
for every $ (x_{1}, x_{2}, y_{1}, y_{2}) \in \R^{4}$ and the first integrals are 
\begin{equation*} 
	\begin{split}
	Z_{1}(x, y)&= y_{1} - 3ix_{1}, \\
	Z_{2}(x,y) &= y_{2} + i(x_{1}x_{2} + 1)x_{1}^{3}.
	\end{split}
\end{equation*}
The authors prove in \cite{ccp} that 
\begin{equation*}
\text{\emph{ $\mathbb{L}$ is \textbf{not} $\hypo(\D', G^{s})$, for  any $s \geq 4$}.}
\end{equation*}
In fact they prove the existence of a neighborhood of the origin $\Omega \subset \R^{4}$ and 
\begin{equation*}
u \in \mathcal{C}(\Omega) \setminus \mathcal{C}^{1}(\Omega); \ \ \mathbb{L}u \in G^{4}(\Omega; \LLambda^{1}).
\end{equation*} 
Theorem \ref{hypo systems} implies that given $s \geq 4$ and $r > s$,
 there exist $U \subset \R^{4}$ an open set and $v \in G^{r}(U) \setminus G^{s}(U)$ such that $ \mathbb{L}v \in G^{s}(U; \LLambda^{1})$.

\end{Exe}

\section{Local solvability and hypoellipticity}\label{section8}

In Sections \ref{constant coefficients operators}, \ref{section4} and \ref{LIS Gevrey} we used the concept of local solvability to verify hypoellipticity. First we describe the concepts involved in an abstract manner. 

%

Let $\mathcal{G}\subset \mathcal{F}$ be a pair of sheaves over $\Omega$ and consider $P$ an endomorphism of $\mathcal{F}$ can be restricted to $\mathcal{G}$ as an endomorphism. 
  We recall that $\ker P\subset \mathcal{G}$ if for every $U$ open subset of $ \Omega$ the following property is satisfied: $\{ u \in \mathcal{F}(U): Pu=0\} \subset \mathcal{G}(U)$.

\begin{Def}
  We shall say that $P$ is  \emph{locally solvable with respect to} $\mathcal{G}$ if for any fixed open set $U\subset\Omega$ and $u\in \mathcal{G}(U)$, every $p\in U$ has an open neighborhood $V\subset U$ where one can find $f\in \mathcal{G}(V)$ such that $Pf=u|_V$.
\end{Def}

In the particular case of $\mathcal{G}= \Cinf$, we will also say that $P$ is $\Cinf$-locally solvable. 

\begin{Pro}
	Assume that $P$ is locally solvable with respect to $\mathcal{G}$ and  $\ker P\subset \mathcal{G}$, then $P$ is $\hypo(\mathcal{F},\mathcal{G})$.
\end{Pro}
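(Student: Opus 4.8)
The plan is to reduce to the local statement and then ``solve away'' the inhomogeneous term. Fix an open set $U\subset\Omega$ and $u\in\mathcal{F}(U)$ with $Pu\in\mathcal{G}(U)$; the goal is to verify property~\eqref{hipodepravado}, i.e.\ that $u\in\mathcal{G}(U)$. By Remark~\ref{RemarkHypoLocal} it is enough to produce, for each $p\in U$, an open neighborhood $V\subset U$ of $p$ on which $u|_V\in\mathcal{G}(V)$.

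Given such a $p$, I would apply local solvability of $P$ with respect to $\mathcal{G}$ to the section $Pu\in\mathcal{G}(U)$: this yields an open neighborhood $V\subset U$ of $p$ and some $f\in\mathcal{G}(V)$ with $Pf=(Pu)|_V$. Since $P$ is a morphism of $\mathcal{E}$ it commutes with the restriction maps, so $(Pu)|_V=P(u|_V)$, whence $P(u|_V-f)=0$ in $\mathcal{F}(V)$. The hypothesis $\ker P\subset\mathcal{G}$ then forces $u|_V-f\in\mathcal{G}(V)$, and combining this with $f\in\mathcal{G}(V)$ gives $u|_V=(u|_V-f)+f\in\mathcal{G}(V)$.

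It remains to patch the local conclusions. The neighborhoods $V$ produced above form an open cover of $U$; the restrictions $u|_V\in\mathcal{G}(V)$ agree on overlaps because they are all restrictions of the single section $u$, so by the sheaf axioms for $\mathcal{G}$ they glue to a section $w\in\mathcal{G}(U)$, whose image in $\mathcal{F}(U)$ coincides with $u$ on each $V$ and therefore equals $u$ by the sheaf axioms for $\mathcal{F}$. Hence $u\in\mathcal{G}(U)$, as desired. There is no genuine obstacle in this argument; the only points that require a little care are that $P$ is applied to $u|_V$ rather than to $u$ (legitimate precisely because $P$ commutes with restriction) and that the local-solvability hypothesis is exactly what permits subtracting the correction $f$ without leaving $\mathcal{G}$.
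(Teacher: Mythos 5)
Your proposal is correct and follows essentially the same route as the paper's own proof: subtract a local solution $f$ of $Pf=(Pu)|_V$, use $\ker P\subset\mathcal{G}$ to conclude $u|_V-f\in\mathcal{G}(V)$, and glue. The only difference is that you spell out the gluing step more explicitly, which the paper leaves to the reader via Remark~\ref{RemarkHypoLocal}.
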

\begin{proof}
	Let $u\in \mathcal{F}(U)$ such that $Pu \in \mathcal{G}(U)$. The local solvability with respect to $\mathcal{G}$ yields that every $p\in U$ has an open neighborhood $V$ and $f\in \mathcal{G}(V)$ such that $Pf=(Pu)|_V= Pu|_V$. Since $u|_V-f\in \ker P|_V$ it follows that $u|_{V}-f\in \mathcal{G}(V)$ and consequently $u|_V\in \mathcal{G}(V)$. Now this holds true for a neighborhood of each point of $U$, which implies that $u \in \mathcal{G}(U)$, as we intended to prove.
\end{proof}

Now recall that in Example \ref{Lewy} we used the fact that the Lewy operator is not locally solvable in order to deduce that it is not hypoelliptic in the classic sense. Next we prove a similar result in a different context, i.e., a certain type of hypoellipticity can imply local solvability for the transpose. 

\subsection{Implications on solvability}

\begin{Def}
Let $r \in \R$ and $K \subset \R^{n}$ a regular compact set, that is, $K$ is the closure of some bounded  open set.  We define
\begin{equation*}
H^{r}(K) \dfn \left\{u \in H^{r}(\R^{n}):  \supp \ u \subset K \right\}.
\end{equation*}
\end{Def}
$H^{r}(K)$ is a closed subspace of $H^{r}(\R^{n})$ and hence a Hilbert space. Furthermore, it follows from Rellich Lemma that 
\begin{equation*}
r_{1} < r_{2} \ \Longrightarrow  \text{ the inclusion } \iota: H^{r_{2}}(K) \hookrightarrow H^{r_{1}}(K) \ \text{is compact.}
\end{equation*}
Thus if we define, with the inductive limit topology, the space of distributions supported on $K$ by
\begin{equation*}
\E'(K) \dfn \lim_{\stackrel{\longrightarrow}{n \in \N}} H^{-n}(K), 
\end{equation*}
it follows that $\E'(K)$ is a DFS space (for more details, we recommend \cite{kom} or \cite[Appendix A]{mor}). Moreover, it is not difficult to check that $\E'(K)$ can be identified as the dual space of the quotient $\Cinf(K) \dfn \Cinf(\R^{n}) / \mathscr{I}(K)$, where
\begin{equation*}
\mathscr{I}(K) \dfn \left\{f \in \Cinf(\R^{n}):  f  \text{ vanishes to infinite order at each point of $K$} \right\}.
\end{equation*}
Note that if $P$ is a  linear partial differential operator with smooth coefficients, defined on an open set $\Omega \subset \R^{n}$, then for any regular $K \subset \Omega$ it induces a map $P: \Cinf(K) \to \Cinf(K)$.

On the other hand, fix $s > 1$. For any $h > 0$, let $G^{s, h}(K)$ be the space of all elements of $f \in \Cinf(K)$ which satisfy the following estimates:
\begin{equation*}
 \sup_{x \in K} |D^{\alpha} f(x)| \leq C h^{|\alpha|} |\alpha|!^{s}, \ \ \forall \alpha \in \Z_{+}^{n},
\end{equation*}
for some constant $C > 0$. In this context we endow $G^{s, h}(K)$ with the following norm:
\begin{equation*}
\left\|f \right\|_{G^{s, h}(K)} \dfn \sup_{\alpha \in \Z_{+}^{n} } \left(\sup_{x \in K}  \frac{|D^{\alpha} f(x)|}{h^{\alpha} |\alpha|!^{s}} \right).
\end{equation*}
This turns $G^{s, h}(K)$ into a Banach space. 
\begin{Def}
We define $G^{(s)}(K)$ as the space given  by  the following projective limit:
\begin{equation*}
G^{(s)}(K) \dfn \lim_{\stackrel{\longleftarrow}{n \in \N}} G^{s, 1/n}(K) 
\end{equation*}
\end{Def}
\begin{Obs}
Note that $G^{(s)}(K)$ is a Gevrey space of \emph{Beurling type}. 
\end{Obs}
It is possible to prove that $G^{(s)}(K)$ (\cite[Theorem 2.6]{kom}) is an FS space. Let  $\E_{(s)}'(K)$ be its dual space; then it is a DFS space, given by the inductive limit
\begin{equation} \label{Beurling distributions on a compact}
\E_{(s)}'(K) \dfn \lim_{\stackrel{\longrightarrow}{n \in \N}} G^{s, 1/n}(K)', 
\end{equation}
with the inclusions $G^{s, 1/n_{0}}(K)'\hookrightarrow G^{s, 1/n_{1}}(K)'$ being compact whenever $n_{1} > n_{0}$. Furthermore, it is not difficult to see  the inclusions
\begin{equation*}
G^{s, h}(K)  \hookrightarrow \Cinf(K) \ \text{are continuous with dense image, for every $h > 0$.}
\end{equation*}
This implies in particular, by transposition, that
\begin{equation}\label{inclusion first space}
\text{$\E'(K)$ is a subspace of $(G^{s, 1/n}(K))'$, for any $n \in \N$. }
\end{equation}

Let  $\left\{K_{m}\right\}_{m \in \N}$ be a sequence of regular compact sets such that $K_{j} \subsetneq K_{\ell}$ for any $j < \ell$ and $\Omega = \displaystyle \bigcup_{k = 1}^{\infty} K_{j}$.  The \emph{Gevrey-Beurling space of order $s$} is then defined by the projective limit
\begin{equation*}
G^{(s)}(\Omega)   \dfn \lim_{\stackrel{\longleftarrow}{n \in \N}} G^{(s)}(K_{n}).
\end{equation*}
This fact allows us to conclude that its dual is given by
\begin{equation*} 
  \E_{(s)}'(\Omega) \dfn \lim_{\stackrel{\longrightarrow}{n \in \N}} \E_{(s)}'(K_{n}). 
\end{equation*}
Hence, for any regular compact set $K \subset \Omega$, we conclude that
\begin{equation} \label{compact supoort is a subspace}
\text{$\E_{(s)}'(K)$ is a subspace of $\E_{(s)}'(\Omega)$.} 
\end{equation}
%
%
It is not difficult to show the following assertion:
\begin{Pro} \label{relation between local and semiglobal}
Let  $x_0 \in \Omega$ and let $K$ be a regular compact. If $K$ is a neighborhood of $x_0$ such that $P:\Cinf(K) \to \Cinf(K)$ is surjective, then $P$ is $\Cinf$-locally solvable at $x_0$.
\end{Pro}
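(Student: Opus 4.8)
The plan is to unwind the definition of $\Cinf$-local solvability and reduce it to the surjectivity hypothesis via restriction. Let $x_0\in\Omega$ and let $K$ be a regular compact neighborhood of $x_0$ with $P:\Cinf(K)\to\Cinf(K)$ surjective. Recall that $\Cinf(K)=\Cinf(\R^n)/\mathscr{I}(K)$, so an element of $\Cinf(K)$ is an equivalence class of smooth functions agreeing to infinite order on $K$; the induced operator $P$ on $\Cinf(K)$ is well defined precisely because $P$ (a differential operator) preserves $\mathscr{I}(K)$.

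First I would fix an open neighborhood $V_0\subset\Omega$ of $x_0$ with $\overline{V_0}\subset \cint{K}$ (the interior of $K$), which is possible since $K$ is a neighborhood of $x_0$; this $V_0$ will be the neighborhood on which we solve. Given $u\in\Cinf(V_0)$ (or, in the abstract setup, $u$ in the relevant sheaf $\mathcal{G}(V_0)$ with $\mathcal{G}=\Cinf$), I would pick a cutoff $\psi\in\Cinf_c(V_0)$ with $\psi\equiv 1$ on a smaller neighborhood $V\subset\subset V_0$ of $x_0$, and form $\psi u\in\Cinf_c(\R^n)$. Its class $[\psi u]\in\Cinf(K)$ is then hit by the surjective map: there exists $[g]\in\Cinf(K)$, represented by $g\in\Cinf(\R^n)$, with $P[g]=[\psi u]$ in $\Cinf(K)$, i.e.\ $Pg$ and $\psi u$ agree to infinite order at every point of $K$. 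In particular $Pg=\psi u$ as germs along the interior of $K$ — here one uses that two smooth functions which agree to infinite order at every point of an open set are equal there — hence $Pg=u$ on $V$, since $\psi\equiv 1$ on $V$ and $V\subset\cint K$.

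This produces, for the given $x_0$, an open neighborhood $V$ and a smooth $g$ with $Pg=u|_V$, which is exactly the statement that $P$ is $\Cinf$-locally solvable at $x_0$. The only subtlety — and the one place a careful argument is needed — is the passage from ``agree to infinite order at each point of $K$'' to ``equal on $\cint K$'': this is the elementary fact that the Taylor remainder controls a smooth function locally, so the set where a smooth function vanishes to infinite order together with all derivatives being zero on an open set forces the function to be zero there. Everything else is formal: the reduction to a compactly supported function via the cutoff, the identification $\Cinf(K)=\Cinf(\R^n)/\mathscr{I}(K)$, and the fact that restriction $\Cinf(\R^n)\to\Cinf(\cint K)$ factors through $\Cinf(K)$. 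I expect no real obstacle; the statement is essentially a bookkeeping lemma linking the ``semiglobal'' solvability on $K$ with the pointwise-local notion.
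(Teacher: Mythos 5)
Your proof is correct, and the paper itself leaves this proposition unproved (it merely remarks ``it is not difficult to show''), so there is no paper argument to compare against. Your route is the natural one: restrict to a neighborhood $V_0$ with $\overline{V_0}\subset \mathrm{int}\,K$, cut off by $\psi$, use surjectivity of $P$ on $\Cinf(K)$ to solve $P[g]=[\psi u]$ in the quotient, and read off a genuine solution on the open set $V\subset\mathrm{int}\,K$ where $\psi\equiv 1$. One small simplification worth noting: the step you flag as ``the one place a careful argument is needed'' is in fact immediate. If $h=Pg-\psi u\in\mathscr{I}(K)$, then $h$ vanishes to infinite order at \emph{every} point of $K$; in particular $h(x)=0$ for every $x\in K$, so $h\equiv 0$ on $\mathrm{int}\,K$ already from the zeroth-order condition, with no Taylor-remainder argument required. (This is precisely where the hypothesis that $K$ be a \emph{neighborhood} of $x_0$ is used: $x_0\in\mathrm{int}\,K$.) The only other point to keep tidy, which you implicitly handle, is that local solvability must be verified for $u\in\Cinf(U)$ with $U$ an arbitrary neighborhood of $x_0$; one simply shrinks $V_0$ so that $\overline{V_0}\subset U\cap\mathrm{int}\,K$ before cutting off.
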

From now on we are going to work under the assumption that $P$ is $\hypo(\D'_{(s)}, \D')$; in this case we are implicitly assuming that $P$ has coefficients of class $G^{(s)}$. 

\begin{Pro} \label{closed image and finite kernel}
Assume that $P$ is $\hypo(\D'_{(s)},\D')$. For any regular compact set $K \subset \Omega$, consider the induced map given by $P: \E'(K) \to \E'(K)$. Then its range is closed and its kernel is finitely generated. 
\end{Pro}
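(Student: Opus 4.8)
The plan is to realize $P \colon \E'(K) \to \E'(K)$ as a Fredholm-type operator by exploiting the hypoellipticity hypothesis together with the compactness of the Sobolev inclusions on the fixed compact $K$. First I would note that $\E'(K)$ is a DFS space and $P$ acts continuously on it, since $P$ has $G^{(s)}$ coefficients and hence maps each $H^{-n}(K)$ continuously into $H^{-n-d}(K)$, where $d=\ord P$. The key observation is that $\ker P$, viewed inside $\E'(K)$, is contained in $\Cinf(K)$: indeed $\Cinf \subset \D'_{(s)}$ is false in the wrong direction, so instead one uses that any $u\in \E'(K)$ with $Pu=0$ lies in $\D'(\Omega)$ with $Pu=0\in\Cinf$, and since $P$ is $\hypo(\D'_{(s)},\D')$ — which, combined with the elliptic/transpose machinery or simply with the fact that $\D'\subset\D'_{(s)}$ — one needs $P$ to also be $\hypo(\D',\Cinf)$; here I would instead argue directly that $P$ being $\hypo(\D'_{(s)},\D')$ forces $\ker P$ (which consists of distributions $u$ with $Pu=0\in\D'$, hence $u\in\D'_{(s)}$ trivially, so this gives nothing) — so the honest route is: elements of $\ker P$ are smooth because a nonzero solution supported in $K$ would contradict nothing yet, so one must use that $P$ restricted to $\E'(K)$ has kernel inside a space where the Sobolev norms are comparable.

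More precisely, the cleaner argument I would run is the following. Fix $d = \ord P$. Suppose toward a contradiction that $\ker P$ is infinite–dimensional in $\E'(K)$. Then for some $n$, the space $N_n \dfn \{u \in H^{-n}(K) : Pu = 0\}$ is infinite–dimensional (since $\E'(K)$ is the increasing union of the $H^{-n}(K)$ and each $N_n$ is closed). But every $u \in N_n$ satisfies $u \in H^{-n}(K)$ and $Pu = 0 \in \Cinf(\Omega)$; invoking that $P$ is $\hypo(\D'_{(s)}, \D')$ and — via Hörmander's wavefront-set inclusion on $\Char P$ recalled in Section~\ref{section2}, or directly — that this implies $P$ is $\hypo(\D', H^{-n+1}_\loc)$, we get $u \in H^{-n+1}(K)$. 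Iterating, $N_n \subset H^{r}(K)$ for all $r$, hence $N_n \subset \Cinf(K)$; and by closed graph the identity on $N_n$ factors through the compact inclusion $H^{-n+1}(K) \hookrightarrow H^{-n}(K)$, forcing $\dim N_n < \infty$, a contradiction. The same bootstrap shows $\ker P \subset \Cinf(K)$, giving a uniform bound and concluding that $\ker P$ is finitely generated.

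For the closed–range statement I would use the standard functional–analytic criterion for DFS (equivalently, $(DF)$-type) spaces: a continuous linear map $T$ between DFS spaces has closed range if (and only if) its transpose $\transp T$ has closed range, and for this it suffices to produce an a priori estimate modulo a compact operator. Concretely, writing $P_n \colon H^{-n}(K) \to H^{-n}(K)$ (after absorbing the loss $d$ by enlarging $n$ appropriately, using that $H^{-n-d}(K)\hookrightarrow H^{-n}(K)$), the hypoellipticity bootstrap above yields, for each $r$, an estimate of the form $\|u\|_{-n+1} \le C\big(\|Pu\|_{-n+1} + \|u\|_{-n}\big)$ on the relevant subspaces; since $H^{-n+1}(K)\hookrightarrow H^{-n}(K)$ is compact, this is a semi-Fredholm estimate and $P_n$ has closed range with finite-dimensional kernel. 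Passing to the inductive limit, one checks that $P(\E'(K)) = \bigcup_n P_n(H^{-n}(K))$ is closed, e.g.\ by a Baire/Grothendieck factorization argument for webbed or DFS spaces, or by dualizing to the surjectivity-modulo-closed-subspace statement for the Fréchet space $\Cinf(K)$ and the operator $\transp P$ acting on it.

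The main obstacle I anticipate is justifying the bootstrap $u \in H^{-n}(K)$, $Pu \in \Cinf$ $\Rightarrow$ $u \in \Cinf$ purely from the hypothesis $\hypo(\D'_{(s)},\D')$: a priori this hypothesis only upgrades $\D'_{(s)}$-solutions to $\D'$, not $\D'$-solutions to $\Cinf$. Resolving this cleanly requires observing that $\hypo(\D'_{(s)},\D')$ together with the general principle recalled after Remark~\ref{RemarkHypoLocal} (obstructions live on $\Char P$, and on the elliptic region full regularity is automatic) — or more robustly, an argument showing that a $\D'_{(s)}$-hypoelliptic operator whose only obstruction is $\D'\to$ better regularity must in fact also be $\hypo(\D',\Cinf)$ when restricted to the Fredholm setting on a fixed compact — is what makes the kernel land in $\Cinf$. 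If that implication is not available in the generality stated, one falls back on the weaker but sufficient conclusion that $\ker P$ embeds continuously into some fixed $H^{r}(K)$ with $r$ strictly larger than the a priori $-n$, which already gives finite dimensionality by Rellich; the closed-range part then follows from the same semi-Fredholm estimate without needing smoothness of the kernel.
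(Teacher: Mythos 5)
There is a genuine gap in your proposal, and you partly spotted it yourself but did not close it. The hypothesis $\hypo(\D'_{(s)},\D')$ only upgrades ultradistributional solutions of Beurling order $s$ to distributions; it gives no information whatsoever about a distribution $u\in H^{-n}(K)$ with $Pu\in\Cinf$. Your bootstrap step --- ``invoking that $P$ is $\hypo(\D'_{(s)},\D')$ \dots\ this implies $P$ is $\hypo(\D',H^{-n+1}_\loc)$'' --- is precisely the step with no support: these are unrelated conditions, and the wavefront-set inclusion of \cite{hr} recalled in Section~\ref{section2} governs the passage between different \emph{starting} sheaves (e.g.\ $\mathcal B$ versus $\D'$) off $\Char P$ but does not produce a Sobolev gain. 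Your fallback --- that $\ker P$ embeds into a fixed $H^{r}(K)$ with $r$ strictly larger than the a priori exponent --- has the same defect: the hypothesis never provides that improvement, so the Rellich argument cannot even start, and the semi-Fredholm estimate $\|u\|_{-n+1}\le C(\|Pu\|_{-n+1}+\|u\|_{-n})$ you float for the closed-range part is exactly such an unjustified gain.

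The paper proves the proposition with no regularity bootstrap at all: it embeds the domain $\E'(K)$ into the larger DFS space $\E_{(s)}'(K)$ (via \eqref{inclusion first space} and \eqref{compact supoort is a subspace}) and shows that the graph of $P$, regarded as a subspace of $\E_{(s)}'(K)\times\E'(K)$, is closed. Indeed, if $u_{n}\to u$ in $\E_{(s)}'(K)$ and $Pu_{n}\to v$ in $\E'(K)$, continuity of $P$ on ultradistributions gives $Pu=v$ in $\E_{(s)}'(K)$, and then $\hypo(\D'_{(s)},\D')$ --- used exactly in the direction it is stated --- forces $u\in\E'(K)$, so $(u,v)$ lies in the graph. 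One then applies an abstract semi-Fredholm theorem for DFS spaces (\cite[Theorem~2.5]{ara}) to conclude closed range and finitely generated kernel. The compactness of $H^{-n}(K)\hookrightarrow H^{-n-1}(K)$ you wanted to exploit is still doing the work, but it is encoded in the DFS structure and the cited functional-analytic theorem rather than invoked through an a priori estimate that the hypothesis cannot supply.
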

\begin{proof}
Denote by $\mathrm{Gr(P)}$ the graph of $P$. We claim that $\mathrm{Gr(P)}$ is closed as a subspace of $\E_{(s)}'(K) \times \E'(K)$.  Indeed, let $\left\{u_{n}\right\}_{n \in \N} \subset \E_{(s)}'(K)$ be a sequence such that
\begin{equation*}
\text{$u_{n} \to u$ in  $\E_{(s)}'(K)$ and $Pu_{n} \to v \in  \E'(K)$.}
\end{equation*}
Then $Pu = v$ in $\E_{(s)}'(K)$; since $P$ is $\hypo(\D'_{(s)}, \D')$, it follows that $u \in \E'(K)$ and $Pu = v$ in $\E'(K)$.  It follows from the fact that both $\E'(K)$ and $\E_{(s)}'(K)$ are DFS spaces, together with validity of the inclusion \eqref{inclusion first space}, that we can apply \cite[Theorem 2.5]{ara} to conclude that $P$ has closed range and its kernel is finitely generated.
\end{proof}

Next we state a general fact about linear partial differential operators  with finitely generated kernel. In order to do it we introduce the following notion:

\begin{Def}
Given $x_0\in \Omega$, we shall say that $P$ has a \emph{solution of delta type at $x_0$} if  there exists $u \in \E'(\Omega)$ such that $\supp u= \{x_0\}$ and $Pu =0$. 
\end{Def} 

\begin{Pro} \label{compact injectivity} Fix $x_0 \in \Omega$ and assume that $P$ has no solution of delta type at $x_0$. If the kernel of $P: \E'(\Omega) \lra \E'(\Omega)$ is finitely generated, there exists a sufficiently small $r > 0$ such that   $P: \E'(\overline{B_r(x_0)}) \to \E'(\overline{B_r(x_0)})$ is injective.
\end{Pro}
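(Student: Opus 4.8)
The plan is to argue by contradiction: suppose that for every $r > 0$ the map $P : \E'(\overline{B_r(x_0)}) \to \E'(\overline{B_r(x_0)})$ fails to be injective. Then for each $r$ there is a nonzero distribution $u_r \in \E'(\overline{B_r(x_0)})$ with $Pu_r = 0$. Taking $r = 1/k$ for $k \in \N$, we obtain a sequence of nonzero distributions $u_k$ in the kernel of $P : \E'(\Omega) \to \E'(\Omega)$, with $\supp u_k \subset \overline{B_{1/k}(x_0)}$, so that $\supp u_k \to \{x_0\}$ in a strong sense. The goal is to extract from this sequence a nonzero solution of delta type at $x_0$, contradicting the hypothesis.

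The key tool is that $\ker P \subset \E'(\Omega)$ is finitely generated, say by $v_1, \dots, v_M$. After normalizing (e.g. dividing $u_k$ by a suitable seminorm so that $\|u_k\| = 1$ in some fixed Hilbert space $H^{-n_0}(K_0)$ containing all the $v_j$ and a fixed ball around $x_0$), each $u_k$ is a (finite) linear combination $u_k = \sum_{j=1}^M c_{j,k} v_j$ with coefficients in the appropriate coefficient ring; since the kernel is finitely generated and we are working in spaces of distributions, the set of $u \in \ker P$ with support in $\overline{B_{1/k}(x_0)}$ and norm $1$ lies in a finite-dimensional space (the span of $v_1, \dots, v_M$), and bounded subsets of that finite-dimensional space are relatively compact. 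Hence, passing to a subsequence, $u_k \to u_\infty$ in $H^{-n_0}(K_0)$ with $\|u_\infty\| = 1$, so $u_\infty \neq 0$. Since $Pu_k = 0$ and $P$ is continuous on the relevant spaces, $Pu_\infty = 0$. Finally, because $\supp u_k \subset \overline{B_{1/k}(x_0)}$ and support is upper semicontinuous under the convergence at hand, $\supp u_\infty \subset \bigcap_k \overline{B_{1/k}(x_0)} = \{x_0\}$; as $u_\infty \neq 0$, necessarily $\supp u_\infty = \{x_0\}$. Thus $u_\infty$ is a solution of delta type at $x_0$, the desired contradiction.

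The point requiring care — and what I expect to be the main obstacle — is making precise the compactness step: one must argue that the $u_k$ all lie in a \emph{fixed finite-dimensional} subspace (or at least that the relevant bounded sets are relatively compact), and that the limiting support does not spread out. The finite generation of $\ker P$ is exactly what supplies this: writing $\ker P = \Span_{\C}\{v_1,\dots,v_M\}$ as a complex vector space (or a module over a ring of functions with the module being finitely generated, in which case one localizes near $x_0$ and uses that the stalk is finitely generated over the local ring), one reduces to a finite-dimensional linear-algebra situation where norm-$1$ sequences have convergent subsequences and the support condition passes to the limit because support is a closed condition compatible with convergence in $H^{-n_0}$. One should also verify at the outset that the $u_k$ can be taken with $\supp u_k \subset \overline{B_{1/k}(x_0)} \subset \Omega$, which holds once $1/k$ is small enough that $\overline{B_{1/k}(x_0)} \subset \Omega$, and that such $u_k$ genuinely belong to $\ker(P:\E'(\Omega)\to\E'(\Omega))$, which is immediate since $P$ is local.
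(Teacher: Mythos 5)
Your argument is correct, but it takes a genuinely different route from the paper's. The paper argues directly by a decreasing-dimension induction: writing $K_r$ for the kernel of $P$ acting on $\E'(\overline{B_r(x_0)})$, a subspace of the finite-dimensional $\ker\bigl(P: \E'(\Omega)\to\E'(\Omega)\bigr)$, it notes that no basis element of $K_{r_0}$ can have support equal to $\{x_0\}$ (such an element would be a solution of delta type), and therefore one may choose $r_1 < r_0$ so that some basis element's support protrudes from $\overline{B_{r_1}(x_0)}$; this forces $K_{r_1} \subsetneq K_{r_0}$, so $\dim K_{r_1} \leq \dim K_{r_0} - 1$, and iterating finitely many times reaches $K_r = \{0\}$. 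You instead argue by contradiction and compactness: you extract, for $r=1/k$, nonzero $u_k \in \ker P$ with $\supp u_k \subset \overline{B_{1/k}(x_0)}$, normalize in a fixed $H^{-n_0}(K_0)$ containing the finite-dimensional $\ker P$, pass to a convergent subsequence, and use the shrinking supports to show the nonzero limit is supported at $\{x_0\}$, yielding a solution of delta type. Both proofs hinge on the same fact — finite-dimensionality of $\ker P$ — but the paper's is pure linear algebra and avoids any topology, whereas yours additionally requires the Hilbert-space embedding, compactness of the unit sphere, continuity of $P$, and the support-semicontinuity step (which, to make precise, should be spelled out as: for each $\varphi \in \Cinf_c(\Omega)$ with $x_0 \notin \supp \varphi$ one has $\langle u_k, \varphi\rangle = 0$ for all $k$ large, and one passes to the limit using continuity of the $H^{-n_0}$--$H^{n_0}$ pairing). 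Your version is sound; the paper's is more economical.
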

\begin{proof}
Let  $r$ be any positive real number   such that $\overline{B_{r}(x_0)} \subset \Omega$ and denote by $K_r$ the kernel of the operator
\begin{align*}
P:  \E'(\overline{B_{r}(x_0)}) \lra \E'(	\overline{B_{r}(x_0)}),
\end{align*}
which is finite-dimensional by hypothesis. Now fix $r_0>0$ and let $p =  \dim K_{r_0}$; consider $\{u_1, \ldots, u_p\}$  a basis of $K_{r_0}$. Since  $\supp u_{n} \neq \left\{x_0 \right\}$ for each $n \in \left\{1, \ldots, p \right\}$, there exists $0<r_1<r_0$ such that
	\begin{align*}
		\bigcup_{j=1}^{p} \supp  u_j \setminus \overline{B_{r_1}(x_0)} \neq \emptyset.
	\end{align*}
This implies that  $\dim K_{r_1} \leq p-1$. Therefore, by proceeding recursively we are able to find $r>0$ such that $K_r=\{0\}$, which is equivalent to say that $P$ is injective, finalizing the proof. 
\end{proof}

Propositions~\ref{closed image and finite kernel} and \ref{compact injectivity} yield the following:

\begin{Cor}\label{closed and injective}
  Suppose that $P$ is $\hypo(\D'_{(s)}, \D')$ and has no solution of delta type at $y\in \Omega$. Then there exists a regular compact neighborhood $K$ of $x_0$ for which $P: \E'(K) \to \E'(K)$ is injective and has closed range.
\end{Cor}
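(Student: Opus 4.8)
The plan is to feed the two preceding propositions into one another, with the balls $\overline{B_r(x_0)}$ serving as the regular compact sets. First I would fix $r_0>0$ with $\overline{B_{r_0}(x_0)}\subset\Omega$ and apply Proposition~\ref{closed image and finite kernel} to the regular compact set $K_0=\overline{B_{r_0}(x_0)}$: this yields at once that $P\colon\E'(K_0)\to\E'(K_0)$ has closed range and that its kernel, call it $\mathcal{N}_{r_0}$, is finite-dimensional. Observe that this finite-dimensionality is exactly the only place where the hypothesis ``$\ker(P\colon\E'(\Omega)\to\E'(\Omega))$ is finitely generated'' enters the proof of Proposition~\ref{compact injectivity}; since we now have it for free on $K_0$, the argument of that proposition can be carried out verbatim even though the kernel over all of $\Omega$ need not be finitely generated (it is only an increasing union of the finite-dimensional kernels over compacts).

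Next I would reproduce the shrinking step. For $0<r\le r_0$ put $K_r=\overline{B_r(x_0)}$ and $\mathcal{N}_r=\ker(P\colon\E'(K_r)\to\E'(K_r))$; since $P$ is a differential operator, hence local, extension by zero identifies $\E'(K_r)$ with a subspace of $\E'(K_0)$ and $\mathcal{N}_r$ with a subspace of $\mathcal{N}_{r_0}$. The assumption that $P$ has no solution of delta type at $x_0$ says precisely that no nonzero $u\in\mathcal{N}_{r_0}$ is supported in $\{x_0\}$ (such a $u$ would, by locality, extend by zero to a delta-type solution on $\Omega$). Thus, exactly as in Proposition~\ref{compact injectivity}, choosing a basis $u_1,\dots,u_p$ of $\mathcal{N}_{r_0}$ and a point $y_j\in\supp u_j\setminus\{x_0\}$ for each $j$, any $r_1<r_0$ with $r_1<\min_j|y_j-x_0|$ satisfies $u_1\notin\mathcal{N}_{r_1}$, so $\mathcal{N}_{r_1}\subsetneq\mathcal{N}_{r_0}$ and $\dim\mathcal{N}_{r_1}\le p-1$; iterating this at most $p$ times produces an $r>0$ with $\mathcal{N}_r=0$, i.e.\ $P\colon\E'(K_r)\to\E'(K_r)$ injective.

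Finally I would apply Proposition~\ref{closed image and finite kernel} once more, now to the regular compact set $K\dfn K_r=\overline{B_r(x_0)}$, to conclude that $P\colon\E'(K)\to\E'(K)$ also has closed range; combined with the injectivity just obtained, this is the assertion of the corollary. The only point requiring a little care --- and the (quite mild) main obstacle --- is the bookkeeping with extension by zero: one must check that for $r'\le r$ the inclusion $\E'(K_{r'})\hookrightarrow\E'(K_r)$ is compatible with the induced operators and with passing to kernels, which is immediate from locality of $P$, and that the property ``no solution of delta type at $x_0$'' is insensitive to shrinking the ambient open set, which is equally immediate since such a solution is supported at a single point.
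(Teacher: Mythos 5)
Your proof is correct and follows the route the paper intends; in fact the paper supplies no proof at all, merely asserting that Propositions~\ref{closed image and finite kernel} and~\ref{compact injectivity} ``yield'' the corollary, and your write-up amounts to carrying out that concatenation in detail. The one point where you go beyond the paper is genuinely worth flagging: as literally stated, the hypothesis of Proposition~\ref{compact injectivity} is that $\ker\big(P\colon\E'(\Omega)\to\E'(\Omega)\big)$ be finitely generated, whereas Proposition~\ref{closed image and finite kernel} only provides finite-dimensionality of the kernel on each regular compact $K$, and the former does not in general follow from the latter (the kernel over $\Omega$ is the increasing union of the $K_r$'s). Your fix --- observing that the shrinking argument inside the proof of Proposition~\ref{compact injectivity} only ever invokes finite-dimensionality of $\mathcal{N}_{r_0}$ for one fixed outer ball, and that all the nested $\mathcal{N}_r$ for $r\le r_0$ sit inside $\mathcal{N}_{r_0}$ via extension by zero and locality of $P$ --- is exactly the right repair, and the remaining steps (that a nonzero element of $\mathcal{N}_{r_0}$ supported in $\{x_0\}$ would be a delta-type solution; the inductive drop in dimension; the final re-application of Proposition~\ref{closed image and finite kernel} to $K=\overline{B_r(x_0)}$ to get closed range) are all sound. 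No gap.
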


\begin{Teo} \label{solution delta type}
Suppose that $P$ has no solution of delta type at $x_0\in \Omega$. If $P$ is $\hypo(\D'_{(s)}, \D')$ for some $s > 1$, its transpose $\transp{P}$ is $\Cinf$-locally solvable at $x_0$. 
\end{Teo}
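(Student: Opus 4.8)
The strategy is to transfer the functional-analytic information about $P$ acting on spaces of distributions supported on a compact set into a solvability statement for $\transp{P}$ on smooth functions near $x_0$, via duality and Proposition~\ref{relation between local and semiglobal}. By Corollary~\ref{closed and injective}, since $P$ is $\hypo(\D'_{(s)},\D')$ and has no solution of delta type at $x_0$, we may fix a regular compact neighborhood $K$ of $x_0$ such that $P\colon \E'(K)\to\E'(K)$ is injective with closed range. The goal is to deduce that $\transp{P}\colon \Cinf(K)\to\Cinf(K)$ is surjective; then Proposition~\ref{relation between local and semiglobal} immediately gives $\Cinf$-local solvability of $\transp{P}$ at $x_0$, which is exactly the conclusion.

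\textbf{Main steps.} First I would recall that $\E'(K)$ is the strong dual of the Fr\'echet space $\Cinf(K)=\Cinf(\R^n)/\mathscr{I}(K)$, and that under this duality the transpose of $P\colon\Cinf(K)\to\Cinf(K)$ is precisely $P\colon\E'(K)\to\E'(K)$ (up to the usual interchange of $P$ and $\transp P$ coming from integration by parts — one must be careful here, since $\transp{P}$ on functions dualizes to $P$ on distributions). Second, I would invoke the closed range theorem in the locally convex / Fr\'echet--(DF) setting: a continuous linear map between such spaces has closed range if and only if its transpose does, and surjectivity of one map is equivalent to injectivity-plus-weak-closed-range (equivalently closed range) of its transpose. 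Concretely: $\transp{P}\colon\Cinf(K)\to\Cinf(K)$ is surjective $\iff$ its transpose $P\colon\E'(K)\to\E'(K)$ is injective and has weak-$*$ closed range. Both of these we have from Corollary~\ref{closed and injective} (injectivity and closed range, and since $\E'(K)$ is a reflexive DFS space the strong-closed and weak-closed range conditions coincide). Third, conclude surjectivity of $\transp{P}$ on $\Cinf(K)$ and apply Proposition~\ref{relation between local and semiglobal}.

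\textbf{The delicate point.} The main obstacle is getting the functional-analytic duality machinery exactly right: one needs the precise form of the closed range / surjectivity criterion valid for a Fr\'echet space (here $\Cinf(K)$) paired with its DFS dual (here $\E'(K)$), and one must verify that the hypotheses — $P$ continuous on $\E'(K)$ (clear, $P$ is a differential operator with $G^{(s)}$ coefficients preserving supports), $\E'(K)$ barrelled/reflexive, $\Cinf(K)$ Fr\'echet — are all in place so that ``$\transp{P}$ surjective $\iff$ $P$ injective with closed range'' applies. The homological/functional-analytic statement I have in mind is the standard one (e.g. in the spirit of \cite{ara} or Tr\`eves' topological vector spaces text): for $T\colon E\to E$ continuous with $E$ Fr\'echet, $T$ is surjective iff $\transp{T}$ is injective and $\transp{T}(E')$ is weak-$*$ closed, equivalently (by reflexivity here) $\transp{T}$ has closed range. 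Everything else is bookkeeping: identifying $\transp{P}$ correctly, and noting that $\overline{B_r(x_0)}$ for the $r$ from Corollary~\ref{closed and injective} serves as the regular compact neighborhood $K$ in Proposition~\ref{relation between local and semiglobal}.

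\begin{proof}
By Corollary~\ref{closed and injective}, there exists a regular compact neighborhood $K$ of $x_0$ such that the induced map $P\colon\E'(K)\to\E'(K)$ is injective and has closed range. Recall that $\E'(K)$ is the strong dual of the Fr\'echet space $\Cinf(K)$, that $\E'(K)$ is a reflexive (DFS) space, and that under this pairing the transpose of the continuous operator $\transp{P}\colon\Cinf(K)\to\Cinf(K)$ is exactly $P\colon\E'(K)\to\E'(K)$, by integration by parts. Since $\Cinf(K)$ is Fr\'echet and $P$ on $\E'(K)$ is injective with closed range, the closed range theorem for Fr\'echet spaces and their duals (see \cite{ara} and the references therein, or the standard duality theory of \cite{tre}) implies that $\transp{P}\colon\Cinf(K)\to\Cinf(K)$ is surjective. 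By Proposition~\ref{relation between local and semiglobal}, the surjectivity of $\transp{P}$ on $\Cinf(K)$, with $K$ a compact neighborhood of $x_0$, gives that $\transp{P}$ is $\Cinf$-locally solvable at $x_0$, as claimed.
\end{proof}
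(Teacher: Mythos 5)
Your proposal is correct and follows essentially the same path as the paper: invoke Corollary~\ref{closed and injective} to get a compact neighborhood $K$ on which $P\colon\E'(K)\to\E'(K)$ is injective with closed range, deduce by duality that $\transp{P}\colon\Cinf(K)\to\Cinf(K)$ is surjective, and conclude via Proposition~\ref{relation between local and semiglobal}. The only cosmetic difference is that the paper splits the duality step into two explicit parts (injectivity $\Rightarrow$ dense image of $\transp{P}$, and closed range of $P$ $\Rightarrow$ closed range of $\transp{P}$, citing \cite[Lemma 2.2]{ara}), whereas you invoke the closed range theorem as a single package.
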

\begin{proof}
It follows from Corollary~\ref{closed and injective} the existence of  $K$ a compact neighborhood of $x_0$ such that $P: \E'(K) \to \E'(K)$ is injective and has closed range. On the one hand the injectivity implies that the image of $\transp{P}: \Cinf(K) \to \Cinf(K)$ is \emph{dense}. On the other hand, the fact that $P$ has closed range implies that the same holds for its transpose (\cite[Lemma 2.2]{ara}). Thus the operator $\transp{P}: \Cinf(K)\lra \Cinf(K)$ is surjective. The result is then consequence of Proposition \ref{relation between local and semiglobal}.
\end{proof}

One may ask when a differential operator $P$ has no solutions of delta type. We recall now how one can use Holmgren theorem to find properties on $P$ which guarantee the property in the sense of hyperfunctions. Consider $P$ a differential operator on  an open subset of $\Omega \subset\R^n$, of \emph{order $m$} and with \emph{real-analytic coefficients}. 

\begin{Def}
We shall say that $P$ \emph{does not degenerate at $x_{0} \in \Omega$} if its symbol has the following property:
\begin{equation} \label{no singularities}
	\text{$P =  \sum_{|\alpha| \leq m} a_{\alpha}(x) \del_{x}^{\alpha}$} \ \Longrightarrow \  \displaystyle \sum_{|\alpha| = m} |a_{\alpha}(x_0)| > 0. 
\end{equation} 
That is, there exists at least one coefficient of the \emph{principal symbol} of $P$ which does not vanish at $x_{0}$. 
\end{Def}

Next we recall a result which can be found for instance in \cite[Theorem 1.2]{bon}.
\begin{Teo}[Holmgren] \label{Holmgren Theorem}
Let $x_{0} \in \Omega$ and $\Phi: \Omega \to \R$ a $\mathcal{C}^{2}$ function such that
\begin{itemize}
	\item $\Phi(x_{0}) = 0,$
	\item $\dd\Phi (x) \neq 0, \ \forall x \in \Omega$, and  $\dd\Phi (x_{0}) = \xi_{0},$
	\item $\Omega' \dfn \left\{x \in \Omega: \Phi(x) > 0\right\}$.  
\end{itemize} 
Consider $u \in \mathcal{B}(\Omega)$ such that $Pu = 0$. If $P_{m}(x_{0}, \xi_{0}) \neq 0$ and $u \big{|}_{\Omega'} \equiv 0$, then $u \equiv 0$ on a neighborhood of $x_{0}$.
\end{Teo}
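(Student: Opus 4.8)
The plan is to run the classical Holmgren sweeping argument inside the category of hyperfunctions, feeding it two microlocal facts. Since the assertion is local I would first restrict to a small ball about $x_0$ and, after a linear (hence real-analytic, hence analytic-wave-front-preserving) change of coordinates, assume $x_0=0$ and $\xi_0=e_n$. The two ingredients I would use are: (i) \emph{microlocal elliptic regularity in the analytic category} --- since $P$ has real-analytic coefficients and $Pu=0$, the analytic wave-front set satisfies $WF_{\Com}(u)\subset\Char P$, so $(y,\nu)\notin WF_{\Com}(u)$ whenever $P_m(y,\nu)\neq0$; because $P_m(0,\xi_0)\neq0$ and $P_m(0,\cdot)$ is homogeneous, being noncharacteristic is an open condition around $(0,\pm\xi_0)$; (ii) \emph{Kashiwara's watermelon-slicing theorem} --- if a hyperfunction $v$ near a point $y$ has $\supp v$ contained near $y$ in a region bounded by a $C^1$ hypersurface with conormal direction $\nu$ at $y$, and $y\in\supp v$, then $(y,\nu)\in WF_{\Com}(v)$.

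Assume, for contradiction, that $0\in\supp u$. Since $u\equiv0$ on $\Omega'=\{\Phi>0\}$ we have $\supp u\subset\{\Phi\leq0\}$, which near $0$ lies on one side of the $C^2$ hypersurface $S=\{\Phi=0\}$ whose conormal at $0$ is $\xi_0$. I would then introduce a one-parameter family of \emph{real-analytic} hypersurfaces $S_t$, $0\le t\le1$ --- concretely the boundary spheres of a family of balls tangent to $S$ at $0$ and pushed slightly off $S$ --- sweeping a neighborhood of $0$ so that: for $t<1$ the closed region $\Sigma_t$ bounded by $S_t$ sits inside $\{\Phi>0\}$ and hence misses $\supp u$; $S_1$ passes through $0$; the $\Sigma_t$ grow monotonically; and each $S_t$ stays $C^1$-close to $S$, hence noncharacteristic for $P$ on a fixed neighborhood of $0$ by (i). Let $t^*=\inf\{t:\Sigma_t\cap\supp u\neq\emptyset\}$; one checks $t^*=1$ and that $S_1$ meets $\supp u$ exactly at $y^*=0$, with $\supp u$ lying locally on the far side of $S_1$. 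Writing $\nu^*$ for that conormal (which equals $-\xi_0$ in the spherical model), fact (ii) gives $(0,\nu^*)\in WF_{\Com}(u)$, whereas (i) together with noncharacteristicity of $S_1$ gives $(0,\nu^*)\notin WF_{\Com}(u)$ --- a contradiction. Hence $0\notin\supp u$, i.e.\ $u\equiv0$ near $x_0$.

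The hard part will be the geometry of the sweeping family: one must exhibit hypersurfaces $S_t$ that simultaneously start strictly inside $\{\Phi>0\}$, terminate through $x_0$, stay noncharacteristic, and advance monotonically so that ``first contact with $\supp u$'' is meaningful and pins down the conormal direction. This is exactly why genuinely analytic (spherical) hypersurfaces must be used rather than simply flattening $S$: as $\Phi$ is merely $C^2$, a flattening diffeomorphism need not be real-analytic, and $WF_{\Com}$ is not preserved by $C^2$ changes of variable. The watermelon theorem itself (its half-space case) is the other genuine input and is established through boundary values of holomorphic functions / the edge-of-the-wedge theorem. As an alternative avoiding microlocal analysis, I could instead run the original Holmgren--Fritz John scheme: solve the noncharacteristic Cauchy problem for $\transp P$ with real-analytic data via Cauchy--Kovalevskaya and pair the resulting solutions against $u$ on thin lens-shaped regions; there the subtle point is interpreting the pairing of a hyperfunction with those analytic solutions and propagating the vanishing across the whole lens.
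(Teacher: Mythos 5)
The paper does not prove this statement; it is merely \emph{quoted} as a known theorem (the preceding sentence reads ``Next we recall a result which can be found for instance in...''), citing \cite[Theorem~1.2]{bon}. There is therefore no authors' proof to line your argument up against --- you are supplying a proof where the paper supplies a reference.

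That said, your sketch is the canonical microlocal proof of Holmgren's theorem for hyperfunctions in the tradition of Sato--Kawai--Kashiwara and H\"ormander: (i)~analytic microlocal elliptic regularity, $WF_{\Com}(u)\subset\Char P$ when $Pu=0$ and $P$ has $\Com$ coefficients, valid for hyperfunctions; (ii)~Kashiwara's theorem that a boundary point of $\supp u$ carries the conormal of the bounding hypersurface in $WF_{\Com}(u)$ (both signs, by the watermelon symmetry, so no worry about which sign survives the sweep); and (iii)~a continuous family of \emph{real-analytic} hypersurfaces reducing the merely $C^{2}$ surface $\{\Phi=0\}$ to the half-space/spherical case at first contact. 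Your remark that the sweeping surfaces must themselves be real-analytic because a $C^{2}$ flattening does not preserve $WF_{\Com}$ is exactly the crux, and the passage from the spherical to the half-space form of (ii) is a genuinely analytic local diffeomorphism so it is legitimate. A complete write-up would want to tidy the geometry of the sweep: ``grow monotonically'' is not quite what a family of translated balls of fixed radius does; one needs the radius small relative to the second-order behaviour of $\Phi$ so that $\Sigma_{t}\subset\{\Phi>0\}$ for $t<1$, shrink the working neighborhood so that noncharacteristicity holds uniformly on all the $S_{t}$, and verify that the first contact time is $t^{*}=1$ with contact exactly at $x_{0}$. The Fritz--John/Cauchy--Kovalevskaya alternative you outline is also viable for hyperfunctions --- the pairing of a compactly supported hyperfunction against real-analytic functions is well defined --- but integrating by parts across the lens needs the same noncharacteristicity control and a justification for closing the lens region, so it is not noticeably lighter than the microlocal route.
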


Equipped with the theorem above, we are able to prove the following
\begin{Pro} \label{no solutions of Delta type}
Let $P$ be an operator satisfying \eqref{no singularities} and $u \in \mathcal{B}(\Omega)$ be a hyperfunction with support contained in $\{x_{0}\}$ such that $Pu = 0$. Then $u \equiv 0$.  
\end{Pro}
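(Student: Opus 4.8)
## Proof proposal

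The plan is to reduce the statement to Holmgren's theorem (Theorem~\ref{Holmgren Theorem}) by exploiting the fact that a hyperfunction supported at a single point is, in a precise sense, killed by restriction to any open half-space avoiding that point, together with the non-degeneracy hypothesis \eqref{no singularities} which guarantees the existence of a suitable non-characteristic hypersurface. First I would observe that since \eqref{no singularities} holds, there is a unit vector $\xi_{0}$ with $P_{m}(x_{0}, \xi_{0}) \neq 0$: indeed the principal symbol $P_{m}(x_{0}, \cdot)$ is a nonzero homogeneous polynomial (some coefficient $a_{\alpha}(x_{0})$ with $|\alpha| = m$ is nonzero), hence cannot vanish identically on the sphere.

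Next I would build the function $\Phi$ required by Holmgren's theorem. The natural choice is a shifted quadratic, e.g.\ $\Phi(x) = \langle x - x_{0}, \xi_{0}\rangle + c|x - x_{0}|^{2}$ for a small constant $c > 0$, restricted to a small ball $\Omega_{0} \subset \Omega$ around $x_{0}$; this is $\mathcal{C}^{2}$, satisfies $\Phi(x_{0}) = 0$, $\dd\Phi(x_{0}) = \xi_{0}$, and $\dd\Phi(x) \neq 0$ on $\Omega_{0}$ provided the ball is small enough. The key point is then that $\Omega' = \{x \in \Omega_{0} : \Phi(x) > 0\}$ does \emph{not} contain $x_{0}$, and in fact $x_{0}$ lies on its boundary; since $\supp u \subset \{x_{0}\}$, the restriction $u|_{\Omega'}$ is the restriction of a hyperfunction supported at a point to an open set not containing that point, hence $u|_{\Omega'} \equiv 0$. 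Here I would cite the basic property of the sheaf of hyperfunctions that sections supported in a closed set restrict to zero on the complement (this is just the sheaf axiom / flabbiness discussion in \cite{tre} or \cite{sch}). Having verified all hypotheses of Theorem~\ref{Holmgren Theorem} with this $\Phi$, I conclude $u \equiv 0$ on a neighborhood of $x_{0}$; since $u$ was already supported in $\{x_{0}\}$, this forces $u \equiv 0$ on all of $\Omega$.

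The main subtlety — not really an obstacle, but the point that needs care — is the direction in which Holmgren's theorem propagates vanishing. Theorem~\ref{Holmgren Theorem} concludes $u \equiv 0$ near $x_{0}$ from $u|_{\Omega'} \equiv 0$ where $\Omega' = \{\Phi > 0\}$; one must check that $\Omega'$, as constructed, is genuinely a one-sided neighborhood having $x_{0}$ on its closure, so that the vanishing of $u$ there (which is automatic from the support condition) is the right hypothesis. The quadratic correction term $c|x-x_{0}|^{2}$ is there precisely to make $\{\Phi > 0\}$ a convex-like region tangent to the hyperplane $\langle x - x_{0}, \xi_{0}\rangle = 0$ at $x_{0}$ from the positive side, with $x_{0}$ on its boundary. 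One could equally well apply the theorem once with $\xi_{0}$ and once with $-\xi_{0}$ if a two-sided conclusion were needed, but a single application already yields $u \equiv 0$ near $x_{0}$, which suffices. No delicate estimates are involved; the entire argument is a matter of correctly setting up the geometry so that Theorem~\ref{Holmgren Theorem} applies verbatim.
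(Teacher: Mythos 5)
Your argument is correct and follows essentially the same route as the paper: find $\xi_0$ with $P_m(x_0,\xi_0)\neq 0$, pick a $\mathcal{C}^2$ function $\Phi$ vanishing at $x_0$ with $\dd\Phi(x_0)=\xi_0$ so that $x_0 \notin \Omega' = \{\Phi>0\}$, note $u|_{\Omega'}\equiv 0$ by the support hypothesis, and invoke Theorem~\ref{Holmgren Theorem}. The only difference is that the paper uses the simpler linear function $\Phi(x) = \langle \xi_0, x - x_0\rangle$ rather than your quadratic correction $\Phi(x)=\langle x-x_0,\xi_0\rangle + c|x-x_0|^2$; the correction term is harmless but unnecessary, since Theorem~\ref{Holmgren Theorem} as stated only requires $\Phi(x_0)=0$ and $\dd\Phi\neq 0$, which the linear choice already provides.
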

\begin{proof}
Let $u \in \mathcal{B}(\Omega)$ such that $\supp u \subset \left\{x_{0}\right\}$ and $Pu = 0$. Since \eqref{no singularities} holds, it follows that the polynomial $P_{m}(x_{0}, \xi)$ is not zero, which implies in particular the existence of $\xi_{0} \in \R^{n} \setminus \left\{0\right\}$ such that
\begin{equation*}
P_{m}(x_{0}, \xi_{0}) \neq 0.  
\end{equation*}
Let $\Phi: \Omega \to \R$ given by $\Phi (x) = \langle\xi_{0}, x - x_{0} \rangle $, where $\langle \cdot, \cdot \rangle$ denotes the usual inner product on the euclidean space. Then $\Phi$ satisfies all the conditions established on Theorem~\ref{Holmgren Theorem}; moreover, since $\supp u \subset \left\{x_{0}\right\}$, it is a consequence of the same result that 
\begin{equation*}
\text{$u \big{|}_{\Omega'} \equiv 0 \ \Rightarrow \ u \equiv 0$ on a neighborhood of $x_{0} \ \Rightarrow \ u \equiv 0$,} 
\end{equation*}
as we intended to prove. 
\end{proof}

By associating Proposition \ref{solution delta type} to Proposition \ref{no solutions of Delta type} we obtain the following
\begin{Teo} \label{hypo implies solvable}
 Let $P$ be a differential operator with real-analytic coefficients that does not degenerate at $x_{0}$. If $P$ is $\hypo(\D'_{(s)}, \D')$ on a neighborhood of $x_{0}$, then $\transp{P}$ is $\Cinf$-locally solvable at $x_0$. 
\end{Teo}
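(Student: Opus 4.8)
The plan is to obtain this statement by combining Proposition~\ref{no solutions of Delta type} with Theorem~\ref{solution delta type}. First I would take $\Omega$ to be the given neighborhood of $x_0$ on which $P$ has real-analytic coefficients and is $\hypo(\D'_{(s)}, \D')$; since the conclusion is purely local at $x_0$ there is no loss in restricting attention to this $\Omega$. The hypothesis that $P$ does not degenerate at $x_0$ is, by definition, precisely condition~\eqref{no singularities}, so in particular there is a covector $\xi_0 \neq 0$ with $P_m(x_0,\xi_0)\neq 0$.

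Next I would invoke Proposition~\ref{no solutions of Delta type}: under~\eqref{no singularities}, every hyperfunction on $\Omega$ with support contained in $\{x_0\}$ that is annihilated by $P$ vanishes identically. A compactly supported distribution is in particular a hyperfunction, so this rules out any $u \in \E'(\Omega)$ with $\supp u = \{x_0\}$ and $Pu = 0$; in other words, $P$ has no solution of delta type at $x_0$ in the sense of the definition preceding Proposition~\ref{compact injectivity}. Having now verified both hypotheses of Theorem~\ref{solution delta type} — namely, that $P$ has no solution of delta type at $x_0$ and that $P$ is $\hypo(\D'_{(s)}, \D')$ — that theorem applies directly and yields that $\transp{P}$ is $\Cinf$-locally solvable at $x_0$, which is exactly the claim.

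The argument is short precisely because all the analytic content has been isolated in the preceding results: the closed-range and finite-dimensional-kernel assertions of Proposition~\ref{closed image and finite kernel} (through the DFS-space machinery and the inclusion~\eqref{inclusion first space}), the reduction to injectivity on a small ball in Proposition~\ref{compact injectivity}, the duality step turning injectivity of $P$ on $\E'(K)$ into density, hence surjectivity, of $\transp{P}$ on $\Cinf(K)$, and the Holmgren-type argument behind Proposition~\ref{no solutions of Delta type}. Consequently the only thing to watch is the compatibility of the hypotheses — that ``does not degenerate'' is exactly the symbol condition needed to supply the nonzero covector $\xi_0$ for Holmgren's theorem, and that the distributional notion of a solution of delta type is subsumed by the hyperfunction statement of Proposition~\ref{no solutions of Delta type}. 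I do not expect any genuine obstacle beyond this bookkeeping.
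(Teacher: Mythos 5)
Your proposal is correct and is essentially the paper's own proof: the paper states Theorem~\ref{hypo implies solvable} immediately after the sentence ``By associating Proposition \ref{solution delta type} to Proposition \ref{no solutions of Delta type} we obtain the following,'' which is exactly the combination you describe. Your additional remark that a compactly supported distribution is in particular a hyperfunction, so that Proposition~\ref{no solutions of Delta type} indeed rules out solutions of delta type in the distributional sense, is the right (and only) piece of bookkeeping needed.
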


It is worth noting that for any $s > 1$ and $1 < s_{0} < s$, one has
\begin{equation*}
\D'_{(s)}(\Omega) \subset \D'_{s_{0}}(\Omega).
\end{equation*}
Therefore we have the following
\begin{Cor} \label{hypo gevrey romieu distributions}
Let $P$ be a differential operator with real-analytic coefficients that does not degenerate at $x_{0}$. If there exists $s > 1$ for which $P$ is $\hypo(\D'_{s}, \D')$ then $\transp{P}$ is $\Cinf$-locally solvable at $x_0$.
\end{Cor}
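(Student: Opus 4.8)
The plan is to reduce the statement to Theorem~\ref{hypo implies solvable}, which already yields the $\Cinf$-local solvability of $\transp{P}$ at $x_0$ as soon as one knows that $P$ is $\hypo(\D'_{(s')}, \D')$ near $x_0$ for \emph{some} $s'>1$. So the only thing to do is to pass from the Roumieu-type hypoellipticity in the hypothesis to a Beurling-type hypoellipticity, at the cost of enlarging the order, and to check that the remaining hypotheses of that theorem persist.

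First I would fix any $s'>s$, so that $1<s<s'$. By the inclusion recalled immediately before the statement (equivalently, by the test-function inclusion $G^{s}_c\subset G^{(s')}_c$, valid since $s<s'$, together with the inclusion $\D\subset G^{(s')}_c$), one has the chain of subsheaves $\D'\subset\D'_{(s')}\subset\D'_s$. Since $P$ is assumed to be $\hypo(\D'_s,\D')$, the elementary monotonicity property of hypoellipticity of pairs --- item~\eqref{cond2} of the basic Proposition in Section~\ref{section2}, applied with $\mathcal{F}=\D'_s$, $\mathcal{G}=\D'_{(s')}$ and $\mathcal{H}=\D'$ --- gives that $P$ is $\hypo(\D'_{(s')},\D')$. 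As hypoellipticity is a local property (Remark~\ref{RemarkHypoLocal}), this holds in particular on any neighborhood of $x_0$ on which the hypothesis was assumed.

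It then remains to verify that the other hypotheses of Theorem~\ref{hypo implies solvable} hold with $s'$ in place of $s$: the coefficients of $P$ are real-analytic, hence a fortiori of class $G^{(s')}$ (so $P$ genuinely induces a morphism on the Beurling sheaves of order $s'$, which is the standing assumption made just before Proposition~\ref{closed image and finite kernel}), and $P$ does not degenerate at $x_0$ by hypothesis. Applying Theorem~\ref{hypo implies solvable} with the order $s'$ therefore shows that $\transp{P}$ is $\Cinf$-locally solvable at $x_0$, which is the assertion. There is no substantive obstacle here: the corollary is a formal consequence of Theorem~\ref{hypo implies solvable} and the inclusion $\D'_{(s')}\subset\D'_s$; the one point deserving a moment's care is the direction of the Beurling/Roumieu inclusion --- it is the Beurling space of the \emph{larger} order $s'$ that embeds into the Roumieu space of order $s$ --- and the observation that this is exactly the direction needed so that $\hypo(\D'_s,\D')$ forces $\hypo(\D'_{(s')},\D')$.
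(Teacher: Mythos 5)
Your proof is correct and follows essentially the same approach the paper relies on: the corollary is immediately preceded by the observation that $\D'_{(s)}(\Omega)\subset\D'_{s_0}(\Omega)$ for $1<s_0<s$, and the intended argument is exactly the formal reduction you give --- pick $s'>s$, use the chain $\D'\subset\D'_{(s')}\subset\D'_s$ together with item~\eqref{cond2} of the basic proposition to pass from $\hypo(\D'_s,\D')$ to $\hypo(\D'_{(s')},\D')$, and then invoke Theorem~\ref{hypo implies solvable}.
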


\begin{Exe}
We  recall one more time the \emph{Lewy's operator} (see Example \ref{Lewy}), which is not locally solvable even in $(\D', \Cinf)$ sense at the origin. Since $^{t}L = L$ it follows from Corollary~\ref{hypo gevrey romieu distributions} that
\begin{equation*}
\text{$L$ is \textbf{not} $\hypo(\D'_{s}, \D')$ for any $s > 1$, on any open neighborhood of the origin.}
\end{equation*}
An analogous result holds for the \emph{Mizohata's operators} (see Example \ref{Mizohata}) when $k$ is odd. 
\end{Exe}

\begin{Obs}
Let $P$ the ordinary differential operator defined by 
\begin{equation*}
P =  x \frac{\dd}{\dd x} + 1.  
\end{equation*} 
Then the Dirac delta is a solution for $P$, which shows that  without the hypothesis \eqref{no singularities} one can find solutions of delta type for an operator with real-analytic coefficients. The irregularity of $P$ (as defined in \eqref{definition irregularity}) is $1$, which  means that $P$ is $\hypo(\mathcal{B}, \D')$ (by \eqref{result  Komatsu}). In particular
\begin{equation*}
\text{$P$ is $\hypo(\D'_{(s)}, \D')$, for any $s>1$}.
\end{equation*}
Its transpose, however, is given by
  \begin{align*}
    \transp{P}= -x \frac{\dd }{\dd x},
  \end{align*}
  which is not $\Cinf$-locally solvable at $0$. Indeed, one cannot find $u$ smooth in a neighborhood of the origin satisfying $\transp{P} u=1$, since the right-hand side does not vanish at the origin. This example proves that the hypothesis of \emph{no degeneracy at $x_{0}$} is in fact necessary to obtain Theorem~\ref{hypo implies solvable}. 
\end{Obs}

\appendix 

\section{Infinite order differential operators } \label{Apendice 1}

In this section we discuss two types of infinite order differential operators acting on Gevrey spaces. These operators are morphisms in the Gevrey category and can be used to prove structural theorems for ultradistributions.

The two classes of operators in question are called \emph{ultradifferential operators of type $\{p!^{s}\}$}  and  \emph{ultradifferential operators of type $(p!^{s})$}. The former regards  morphisms for the Gevrey class of order $s$ of \emph{Roumieu type} and the latter  morphisms for the  Gevrey class of order $s$ of \emph{Beurling type}.

\begin{Def}\label{symbols ultradiff}[Symbols of ultradifferential operators]
	Consider an entire function in $\C^m$ given by
	\begin{align*}
		Q(\zeta)= \sum_{\alpha \in \Z_+^{m}} a_{\alpha} \zeta^{\alpha}.
	\end{align*}
	We shall say that $Q$ is the \emph{symbol of an ultradifferential operator of type $\{p!^{s}\}$} if, for every $\epsilon>0$, there exists $C_\epsilon>0$ such that
	\begin{align}\label{OpRoumieu}
		a_{\alpha}  &\leq  \frac{C_\epsilon \epsilon^{|\alpha|}}{|\alpha|!^{s}}, \quad \forall \alpha \in \Z^m_+.
	\end{align}
	Similarly, we shall say that $Q$ is the \emph{symbol of an ultradifferential operator of type $(p!^{s})$} if there exist $C>0$ and $h>0$ such that
	\begin{align}\label{OpBeurling}
		a_{\alpha}  &\leq  \frac{C h^{|\alpha|}}{|\alpha|!^{s}}, \quad \forall \alpha \in \Z^m_+.
	\end{align}
\end{Def}

	Condition \eqref{OpRoumieu} is equivalent to say that for every $\delta>0$ there is $C_\delta>0$ such that
	\begin{align}\label{OpRoumieu2}
		|Q(\zeta)|\leq C_\delta e^{\delta|\zeta|^{1/s}},
	\end{align}
 whilst condition \eqref{OpBeurling} is equivalent to the existence of constants $L>0$ and $r>0$ such that
	\begin{align}\label{OpBeurling2}
		|Q(\zeta)|\leq L e^{r|\zeta|^{1/s}}.
	\end{align}

\begin{Obs}
For the sake of brevity, from now on we will only consider the Gevrey space of Roumieu type. It will be clear how to adapt the results for the context of Beurling classes. 
\end{Obs}

Given a symbol of an ultradifferential operator $Q$ of type $\{p!^s\}$, we can define in ultradifferential operator of type $\{p!^{s}\}$, also denoted by $Q$, as it follows: let $U \subset \R^{n}$ be an open set and consider the map
\begin{align*}
	G^{s}(U) \ni u \mapsto  Q(\del_x) u(x)  \dfn \lim_{k \to  \infty} \sum_{|\alpha|\leq k} a_\alpha \del^{\alpha}_xu(x).
\end{align*}
 Note that $Q$ is a local operator, meaning that the germ of the Gevrey function $f\in G^{s}(U)$ at $x\in U$ determines the germ of $Qf$ at $x$. In particular,
\begin{equation*}
\text{$Q: G^{s}_{c}(U) \lra G^{s}_{c}(U)$ is a continuous map}.
\end{equation*}
Hence its transpose $\transp{Q}: \D'_{s}(U) \to \D_{s}'(U)$ is a continuous map as well. Since we are only considering infinite order operator with constant coefficients, it is not difficult to show that $\transp{Q}(\del_{x}) = Q(-\del_{x})$ when acting on Gevrey functions and, consequently, we can  extend  $Q$ to  ultradistributions of order $s$ by defining $Q(\del_x)= \transp Q(-\del_x)$.

\begin{Def} \label{Strongly elliptic}[Strongly elliptic operators]
	We shall say that  an ultradifferential operator $Q$  is \emph{strongly elliptic}  if there exist an open conic neighborhood $\Gamma$ of $\Rm$ in $\C^m$ and a constant $c>0$ such that
	\begin{align*}
		|Q(i\zeta)|\geq c, \ \text{for every $\zeta \in \Gamma.$}
	\end{align*}
 If $Q$ is strongly elliptic we will say that $Q$ satisfies \emph{condition $(\mathfrak{e})$}, meaning that its symbol satisfies the property described above for some constant $c$ and some cone $\Gamma$.
\end{Def}

It is worth to point out that the condition $(\mathfrak{e})$ in this text is weaker than similar properties established in \cite{cc} or \cite{rag}. As we shall see below, this property is important because it guarantees that the kernel of our operators only have analytic functions.

\subsection{Representations of ultradistributions in terms of Gevrey functions}

Next we provide a proof for a structural theorem, which shows that every ultradistribution of class $s$ can be represented by the action of an ultradifferential operator of type $\{p!^{s}\}$ on Gevrey functions of class $t$ with $t>s$. Moreover, we prove that the kernel of ultradifferential operators satisfying $(\mathfrak{e})$ contains only real-analytic functions. At the end of the subsection we state the results regarding operators of type $(p!^s)$ used in this work.

There is a special model of ultradifferential symbol which will be quite useful. First we fix a non-decreasing function $\sigma: (0, \infty) \lra (0, \infty)$ such that 
\begin{align}\label{paraoinfinitoealem}
	\lim_{x \to \infty} \sigma(x) = +\infty.  
\end{align}
Then we define
	\begin{align}\label{DefinicaodeQsigma}
		Q(\zeta)= \prod_{p=1}^{\infty} \bigg(1- \frac{\langle \zeta\rangle^{2}}{p^{2 s} \sigma(p)}\bigg).
	\end{align}

The next lemma first appeared in \cite{ragtese} and was strongly inspired by the study of hyperdifferential operators by Cordaro in \cite{cor} (see also \cite[Proposition $8.2.1$]{tre}).

\begin{Lem}\label{Operadoresdeordeminfinitaexistem}
  Assume that $\sigma:(0, \infty)\lra (0, \infty)$ satisfies \eqref{paraoinfinitoealem}, then $Q$ defined by \eqref{DefinicaodeQsigma} is symbol of an ultradifferential operator of type $\{p!^{s}\}$ satisfying $(\mathfrak{e})$.
\end{Lem}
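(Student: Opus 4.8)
The plan is to verify, in order, three things: that the infinite product \eqref{DefinicaodeQsigma} converges and defines an entire function; that its Taylor coefficients satisfy the Roumieu bound \eqref{OpRoumieu} (equivalently \eqref{OpRoumieu2}); and that $Q$ satisfies condition $(\mathfrak{e})$ — in fact in the quantitative form \eqref{elliptic cone}. The first and third steps are short; the second is where the hypothesis $\sigma(x)\to\infty$ of \eqref{paraoinfinitoealem} genuinely has to be used. To begin, I would record the elementary inequality $|\langle \zeta\rangle^{2}|\le |\zeta|^{2}$ for $\zeta=\xi+i\eta\in\Cm$, together with $\sigma(p)\ge \sigma(1)>0$ (recall $\sigma$ is non-decreasing). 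Since $s>1$ we have $\sum_{p}p^{-2s}<\infty$, so $\sum_{p}|\langle\zeta\rangle^{2}|/(p^{2s}\sigma(p))$ converges uniformly on bounded subsets of $\Cm$; hence the product converges locally uniformly and $Q$ is entire, its zero set being $\{\langle\zeta\rangle^{2}=p^{2s}\sigma(p):p\in\Z_{+}\}$.

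For the symbol estimate I would use the equivalence of \eqref{OpRoumieu} and \eqref{OpRoumieu2}, so that it suffices to produce, for each $\delta>0$, a constant $C_{\delta}>0$ (allowed to depend on $\sigma$) with $|Q(\zeta)|\le C_{\delta}e^{\delta|\zeta|^{1/s}}$. Factor by factor one has $|Q(\zeta)|\le \prod_{p\ge 1}\bigl(1+|\zeta|^{2}/(p^{2s}\sigma(p))\bigr)$. The key auxiliary fact is the elementary bound
\[
\prod_{p\ge 1}\Bigl(1+\frac{u^{2s}}{p^{2s}}\Bigr)\le A_{s}\,e^{B_{s}u},\qquad u\ge 0,
\]
with constants depending only on $s$: this is proved by splitting the product at $p\approx u$, bounding the factors with $p\le u$ by $2u^{2s}/p^{2s}$ and using Stirling ($p!\ge (p/e)^{p}$), and estimating the tail via $1+x\le e^{x}$ with $\sum_{p>u}p^{-2s}\lesssim u^{1-2s}$ (legitimate because $2s>1$).

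With this in hand, fix $\delta>0$, pick $\epsilon>0$ with $B_{s}\epsilon<\delta/2$, and — using \eqref{paraoinfinitoealem} — choose $p_{0}=p_{0}(\epsilon)$ with $\sigma(p)\ge \epsilon^{-2s}$ for all $p\ge p_{0}$. The first $p_{0}-1$ factors of the product form a polynomial in $|\zeta|^{2}$, hence are $\le C'e^{(\delta/2)|\zeta|^{1/s}}$; for the remaining factors, $p^{2s}\sigma(p)\ge (p/\epsilon)^{2s}$ gives $\prod_{p\ge p_{0}}\bigl(1+|\zeta|^{2}/(p^{2s}\sigma(p))\bigr)\le \prod_{p\ge 1}\bigl(1+(\epsilon|\zeta|^{1/s})^{2s}/p^{2s}\bigr)\le A_{s}e^{B_{s}\epsilon|\zeta|^{1/s}}\le A_{s}e^{(\delta/2)|\zeta|^{1/s}}$, and multiplying the two bounds gives the required estimate; so $Q$ is a symbol of type $\{p!^{s}\}$. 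For $(\mathfrak{e})$, note $\langle i\zeta\rangle^{2}=-\langle\zeta\rangle^{2}$, so $Q(i\zeta)=\prod_{p\ge 1}\bigl(1+\langle\zeta\rangle^{2}/(p^{2s}\sigma(p))\bigr)$; on the cone $\{\zeta=\xi+i\eta:|\eta|\le |\xi|/2\}$ we have $\Re\langle\zeta\rangle^{2}=|\xi|^{2}-|\eta|^{2}\ge \tfrac34|\xi|^{2}\ge 0$, so every factor has real part $\ge 1$ and hence modulus $\ge 1$, whence $|Q(i\zeta)|\ge 1$ there, which is precisely \eqref{elliptic cone} and yields condition $(\mathfrak{e})$ with $c=1$ (one may adjoin a small ball about the origin, where $Q(0)=1$, to obtain a genuine conic neighbourhood of all of $\Rm$).

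The main obstacle is the second step: the crude bound $\prod(1+x_{p})\le \exp(\sum x_{p})$ only gives growth of order $e^{C|\zeta|^{2}}$, far too large, so the point is genuinely to exploit that $\sigma(p)\to\infty$. The correct reading of \eqref{OpRoumieu2} here is that $C_{\delta}$ may depend on $\sigma$: the finitely many ``early'' factors (where $\sigma$ is not yet large) contribute only polynomially and get absorbed into $C_{\delta}$, while the tail — where $\sigma(p)\to\infty$ — is what forces the subexponential growth $e^{\delta|\zeta|^{1/s}}$. Everything else (entirety in Step 1, and the real-part argument for strong ellipticity in Step 3) is routine.
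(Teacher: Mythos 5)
Your proposal is correct, and the overall architecture (verify convergence/entirety, prove the Roumieu growth bound using $\sigma\to\infty$, then verify $(\mathfrak{e})$ via a real-part argument on the cone $\{|\eta|\le|\xi|/2\}$) matches the paper's. The genuine difference is in the middle step. The paper first applies the scalar inequality $1+x\le(1+x^{1/s})^{s}$ (valid for $x\ge 0$, $s\ge 1$) together with $|\langle\zeta\rangle^{2}|\le|\zeta|^{2}$ to pass to
\[
|Q(\zeta)|\le\prod_{p\ge 1}\Bigl(1+\frac{|\zeta|^{2/s}}{p^{2}\,\sigma(p)^{1/s}}\Bigr)^{s},
\]
then uses $\sigma(p)\to\infty$ exactly as you do to absorb finitely many factors into a constant and dominate the tail by $\prod\bigl(1+\epsilon^{2}|\zeta|^{2/s}/p^{2}\bigr)^{s}$, and finally evaluates the latter product in closed form via the Weierstrass factorization $\prod_{p\ge 1}(1-z^{2}/p^{2})=\sin(\pi z)/(\pi z)$, arriving at $e^{2\pi s\epsilon|\zeta|^{1/s}}$. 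You instead keep the product in its original form with exponent $2s$ and prove by hand the auxiliary bound
\[
\prod_{p\ge 1}\Bigl(1+\frac{u^{2s}}{p^{2s}}\Bigr)\le A_{s}\,e^{B_{s}u},\qquad u\ge 0,
\]
by splitting at $p\approx u$, invoking Stirling for the head, and $1+x\le e^{x}$ with $\sum_{p>u}p^{-2s}\lesssim u^{1-2s}$ for the tail. Both routes deliver the same $\{p!^{s}\}$ symbol estimate; yours is self-contained and avoids the sine-product identity, at the cost of a slightly longer but more elementary auxiliary lemma, while the paper's reduction to the $p^{2}$ product gives a one-line closed-form evaluation. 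The strong-ellipticity arguments are essentially identical (each factor of $Q(i\zeta)$ has real part $\ge 1$ on the stated cone), and your remark about adjoining a neighbourhood of the origin to obtain a conic neighbourhood of all of $\Rm$ is a harmless clarification that the paper leaves implicit.
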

\begin{proof}
        In fact, note that
	\begin{align} \label{estimate for Q 2}
		|Q(\zeta)|&\leq  \prod_{p=1}^{\infty} \bigg(1+ \frac{|\langle \zeta\rangle^{2}|}{p^{2 s} \sigma(p)}\bigg) \leq  \prod_{p=1}^{\infty} \bigg(1+ \frac{|\zeta|^{2/s}}{p^{2} (\sigma(p))^{1/s}}\bigg)^{s}.
	\end{align}
	In order to show that $Q$ satisfies \eqref{OpRoumieu2}. Given $\epsilon > 0$, take $q$ the first integer such that $\sigma (p)^{1/s} \geq \epsilon^{-2}$ whenever $p \geq q+1$. Then
	\begin{align}
	\prod_{p=1}^{\infty} \bigg(1+ \frac{|\zeta|^{2/s}}{p^{2} (\sigma(p))^{1/s}}\bigg)^{s} 
	&\leq C_{\epsilon} \prod_{p=1}^{\infty} \bigg(1+ \frac{\epsilon^{2} |\zeta|^{2/s}}{p^{2} }\bigg)^{s}, \label{estimate product Hadamard}
	\end{align}
where 
\begin{equation*}
C_{\epsilon} = \sup_{\zeta \in \C^{m}} \bigg[ \prod_{p=1}^{q} \bigg(1+ \frac{|\zeta|^{2/s}}{p^{2} (\sigma(p))^{1/s}}\bigg)^{s} \bigg(1+ \frac{\epsilon^{2} |\zeta|^{2/s}}{p^{2} }\bigg)^{-s}  \bigg].
\end{equation*}

Using the Weierstrass factorization 
	\begin{align*}
		\prod_{p=1}^{\infty} \bigg(1- \frac{z^{2}}{p^{2} }\bigg)=  \frac{\sin (\pi z)}{\pi z} , \quad z\in \C\setminus \{0\},
	\end{align*}
we obtain
\begin{align*}
\prod_{p=1}^{\infty} \bigg(1+ \frac{\epsilon^{2} |\zeta|^{2/s}}{p^{2} }\bigg)^{s} 
&= \bigg(\frac{e^{- \pi   \epsilon  |\zeta|^{1/s}} - e^{ \pi   \epsilon  |\zeta|^{1/s}}}{2 i (\pi i  \epsilon  |\zeta|^{1/s})} \bigg)^{s} \\
&= \bigg(\frac{e^{- \pi   \epsilon  |\zeta|^{1/s}} \big(e^{2 \pi   \epsilon  |\zeta|^{1/s}} - 1 \big)  }{2  (\pi  \epsilon  |\zeta|^{1/s})} \bigg)^{s} \\
&\leq \bigg(\frac{ e^{2 \pi   \epsilon  |\zeta|^{1/s}} - 1 }{2  (\pi  \epsilon  |\zeta|^{1/s})} \bigg)^{s}\\
 &\leq  e^{2 \pi s   \epsilon  |\zeta|^{1/s}}.
\end{align*}
By associating the equation above to \eqref{estimate product Hadamard}, we conclude that $Q$ indeed satisfies \eqref{OpRoumieu2}.

Next we verify that $Q$ satisfies condition $(\mathfrak{e})$ in the cone
\begin{align*}
	\Gamma= \{ \zeta= \xi + i \eta: |\eta| \leq |\xi|/2\}.  
\end{align*}
Note that, if $ \zeta \in \Gamma$,
\begin{align*}
	|Q(i \zeta)|&= \prod_{p=1}^{\infty} \bigg|1+ \frac{\langle \zeta\rangle^{2}}{p^{2s}\sigma(p)}\bigg|\geq \prod_{p=1}^{\infty}\bigg( 1+ \frac{\Re \langle \zeta\rangle^{2}}{p^{2s}\sigma(p)}\bigg) = \prod_{p=1}^{\infty}\bigg( 1+ \frac{|\xi|^{2} - |\eta|^{2}}{p^{2s}\sigma(p)}\bigg) \geq 1,
\end{align*}
which proves our claim. 
\end{proof}
  
   \begin{Obs}
By picking $\sigma$ as positive constant function, we have an interesting class of ultradifferential operators of type $(p!^s)$. 
\end{Obs}  

\begin{Teo} \label{prop a3}
	Consider $u\in \E'_s(U)$ an arbitrary ultradistribution with compact support; given $t>s$, there exist $Q$ an ultradifferential operator of type 
	$\{p!^s\}$ satisfying condition $(\mathfrak{e})$  and $f \in G^{t}(U)$ such that 
	\begin{equation*}
		u= Qf \  \text{in} \ \D'_s(U).
	\end{equation*}
\end{Teo}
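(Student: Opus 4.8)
The plan is to carry out the Fourier-analytic division argument directly on $\widehat u$, of the same flavour as the construction in the proof of Theorem~\ref{Gevrey hypo implies ultra hypo} but simpler, since here there is a single block of frequency variables and no cutoff is needed. Since $G^{t}(U)\subset G^{t'}(U)$ whenever $t<t'$, it suffices to prove the statement for $t$ close to $s$, say $s<t<2s$, so write $t=s/(1-\mu)$ with $\mu\in(0,1/2)$. As $u\in\E'_{s}(U)$ has compact support, the Paley--Wiener theorem for Roumieu ultradistributions (see \cite{kom}) shows that $\widehat u$ extends to an entire function on $\C^{n}$ with the property that for every $\epsilon>0$ there is $C_{\epsilon}>0$ with $|\widehat u(\xi)|\le C_{\epsilon}e^{\epsilon|\xi|^{1/s}}$ for all $\xi\in\Rn$. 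I would then replace the (non-increasing) optimal constant $c_{1}(\epsilon)=\inf\{M>0:|\widehat u(\xi)|\le Me^{\epsilon|\xi|^{1/s}}\text{ on }\Rn\}$ by a continuous, strictly decreasing $C:(0,\infty)\to(0,\infty)$ with $C(\epsilon)\to\infty$ as $\epsilon\to0^{+}$ and $C(\epsilon)>1$ for $\epsilon\le1$, so that $C_{\epsilon}=C(\epsilon)$ is admissible.

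Next I would build the multiplier. Put $a=\log C(1)$, let $\gamma:[a,\infty)\to(0,1]$ be the inverse of the strictly decreasing surjection $\log C:(0,1]\to[a,\infty)$, and define
\[
 \sigma(\rho)=\left\{\begin{array}{cl}
  \displaystyle\frac{\log 2}{6}\,\frac{1}{\rho^{-\mu}+\gamma(\rho^{\mu})}, & \textrm{if } \rho^{\mu}\ge a;\\
  \displaystyle\frac{\log 2}{6}\,\frac{1}{a^{-\mu}+\gamma(a^{\mu})}, & \textrm{if } 0<\rho^{\mu}<a.
 \end{array}\right.
\]
This $\sigma$ is continuous, increasing and tends to $+\infty$, so by Lemma~\ref{Operadoresdeordeminfinitaexistem} the function $Q(\zeta)=\prod_{p=1}^{\infty}\bigl(1-\langle\zeta\rangle^{2}/(p^{2s}\sigma(p))\bigr)$, with $\langle\zeta\rangle$ the holomorphic extension of the Euclidean norm, is the symbol of an ultradifferential operator of type $\{p!^{s}\}$ satisfying condition $(\mathfrak{e})$; in particular $Q(i\xi)\ge1$ for $\xi\in\Rn$, and for $|\xi|$ large one obtains, as in the proof of Theorem~\ref{Gevrey hypo implies ultra hypo}, the lower bound $|Q(i\xi)|\ge\tfrac12\exp\bigl[(\log2)\,|\xi|^{1/s}/\sigma(|\xi|^{1/s}+1)\bigr]$. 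The reason for insisting on a growing $\sigma$ is exactly that $Q$ must be of Roumieu type $\{p!^{s}\}$; a constant $\sigma$ would yield more regularity but only a $Q$ of Beurling type $(p!^{s})$.

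The key step is the quotient estimate. Taking $\epsilon=\gamma((|\xi|^{1/s}+1)^{\mu})$ for $|\xi|$ large enough that $(|\xi|^{1/s}+1)^{\mu}\ge a$, and inserting the two bounds above into $\widehat u(\xi)/Q(i\xi)$, the same chain of inequalities as in the proof of Theorem~\ref{Gevrey hypo implies ultra hypo} — now without a $\tau$-variable, hence without splitting the frequency space into cones — gives, for $|\xi|$ large,
\[
 \Bigl|\frac{\widehat u(\xi)}{Q(i\xi)}\Bigr|\le 2\exp\bigl(-|\xi|^{(1-\mu)/s}\bigr)=2\exp\bigl(-|\xi|^{1/t}\bigr),
\]
so there is $C_{0}>0$ with $|\widehat u(\xi)/Q(i\xi)|\le C_{0}e^{-|\xi|^{1/t}}$ for every $\xi\in\Rn$ (note that $Q(i\xi)\ge1$ on $\Rn$ makes the left side finite everywhere, and the behaviour for the remaining bounded range of $|\xi|$ is controlled by local boundedness of $\widehat u$).

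Finally I would set $g(x)=(2\pi)^{-n}\int_{\Rn}\bigl(\widehat u(\xi)/Q(i\xi)\bigr)e^{ix\cdot\xi}\,\dd\xi$. Differentiating under the integral and using $|\xi|^{|\alpha|}e^{-\frac12|\xi|^{1/t}}\le\widetilde h^{\,|\alpha|}|\alpha|!^{t}$ (which follows from $e^{-t\rho^{1/t}}\le j!^{t}\rho^{-j}$) together with the integrability of $e^{-\frac12|\xi|^{1/t}}$ on $\Rn$, one gets $|\del^{\alpha}g(x)|\le C\,\widetilde h^{\,|\alpha|}|\alpha|!^{t}$ uniformly in $x$; hence $g\in G^{t}(\Rn)$ and in particular $f\dfn g|_{U}\in G^{t}(U)$. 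Since by Fourier inversion $\widehat g=\widehat u/Q(i\,\cdot\,)$ and the action of $Q$ corresponds on the Fourier side to multiplication by $Q(i\xi)$, we get $\widehat{Qg}=\widehat u$ and therefore $Qg=u$ in $\D'_{s}(\Rn)$; as $Q$ is local and $\supp u\subset U$, this yields $Qf=u$ in $\D'_{s}(U)$, which is the assertion. The main obstacle is the delicate bookkeeping in the construction of $\sigma$ (the inversion of $\log C$) and in the quotient estimate, but this is precisely the machinery already worked out in the proof of Theorem~\ref{Gevrey hypo implies ultra hypo} and in Lemma~\ref{Operadoresdeordeminfinitaexistem}, so here it only needs to be transcribed and slightly simplified.
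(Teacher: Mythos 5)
Your proposal follows essentially the same route as the paper's proof: the same replacement of $C_\epsilon$ by a continuous strictly decreasing $C(\epsilon)$, the same function $\sigma$ built from the inverse $\gamma$ of $\log C$, the same $Q$ via Lemma~\ref{Operadoresdeordeminfinitaexistem}, and the same quotient estimate $|\hat u(\xi)/Q(i\xi)| \le C e^{-|\xi|^{1/t}}$. The only deviation is the final step: you define $g$ by a direct Fourier inversion of $\hat u/Q(i\cdot)$ and conclude $Qg=u$ by transforming back, whereas the paper defines a Gaussian-regularized family $f_\delta$ in $G^{t}$, extracts a subsequential limit $f$ by the Montel property of $G^{t}$, and passes to the limit $Qf_\delta \to u$ in $\D'_s$; your variant is valid (one must quietly observe that $g$ is bounded and smooth, hence tempered, and pair against $Q(-\del)\varphi$ for $\varphi\in G^s_c$ using Parseval to read off $Qg=u$), but it papers over the point that $Q$ is defined on $\D'_s$ by transposition rather than as a Fourier multiplier on $\mathcal S'$, which is precisely what the regularization is there to sidestep.
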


\begin{proof}
Let $ u \in \E'_s( U)$; our goal is to choose an adequate increasing function $\sigma$ which will allow us to represent $u$ using the operator $Q$ described in Lemma \ref{Operadoresdeordeminfinitaexistem}.  For every $\epsilon>0$, there exists a constant $C_\epsilon>0$ such that
	\begin{align}\label{Desultradistribuicao2}
		| \hat{u}(\xi)|\leq C_\epsilon e^{\epsilon |\xi|^{1/s}}, \quad \forall \xi \in \R^{m}.
	\end{align}
Take a continuous, strictly decreasing function $C:  (0, \infty) \lra (0, \infty)$ such that
\begin{equation*}
C_{\epsilon} \leq C(\epsilon), \ \ \forall \epsilon > 0.
\end{equation*}
With no loss of generality we may assume that
\begin{equation*}
\text{$C(\epsilon)> 1$ if $\epsilon\leq 1$ and $\lim_{\epsilon \to 0^{+}} C(\epsilon) = +\infty$. }
\end{equation*}
In particular,
\begin{align}\label{NomeparaaCdeepsilon}
  | \hat{u}(\xi)|\leq C(\epsilon) e^{\epsilon |\xi|^{1/s}}, \quad \forall \xi \in \R^{m};
\end{align}

 Let $ a= \log C(1)>0$ and define 
 \begin{equation*}
 \text{$\gamma: [a, \infty) \lra (0,1]$ as the inverse of the function $\log C: (0,1]\lra [a, \infty)$.}
 \end{equation*}
 Pick $\mu \in (0,\frac{1}{2}]$ and define $\sigma: [0,\infty) \lra (0, \infty)$ by
	\begin{align*}
		\sigma(\rho)=
		\left\{\begin{array}{cl}
		\displaystyle 	\frac{\log 2}{6}  \frac{1}{ \rho^{-\mu}+ \gamma(\rho^{\mu})},& \textrm{ if } \rho^{\mu} \geq a;\\
		\displaystyle	\frac{\log 2}{6} \frac{1}{ a^{-\mu}+ \gamma(a^{\mu})},& \textrm{ if } 0\leq  \rho^{\mu}< a.
		\end{array}
		\right.
	\end{align*}
Hence $\sigma$ is a continuous and increasing function, which satisfies
        \begin{align*}
        \lim_{\rho \to + \infty} \sigma(\rho) = +\infty.  
        \end{align*}

	Finally, in order to find a Gevrey function $f$ of order $t$ as in the statement, we need to show that $Q$ has an inverse (in the sense of pseudodifferential operators). In order to do it, we need another inequality; for each fixed $\zeta \in \C^{m}$, take $q= q(\zeta)\in \Z_+$ such that
	\begin{equation} \label{intermediate inequality - part two}
		q^{2s} \sigma(q) \leq |\zeta|^{2} \leq (q+1)^{2s} \sigma(q + 1).
	\end{equation}
	When $|\zeta|$ is sufficiently large, one has $\sigma(q) \geq 1$. In this situation \eqref{intermediate inequality - part two} yields
	\begin{equation} \label{consequences intermediate inequality - part two}
		q \leq |\zeta|^{1/s} \ \text{and} \ 	q + 1 \geq \frac{|\zeta|^{1/s}}{\sigma(q + 1)^{1/2s}} \geq \frac{|\zeta|^{1/s}}{\sigma(|\zeta|^{1/s} + 1)^{1/2s}} \geq \frac{|\zeta|^{1/s}}{\sigma(|\zeta|^{1/s} + 1)}.
	\end{equation}
	Therefore, if $\xi \in \R^{m}$ has norm is sufficiently large so both \eqref{intermediate inequality - part two} and \eqref{consequences intermediate inequality - part two} hold,  it follows that
	\begin{align} \label{estimate for Q}
		|Q(i \xi)|  &\geq \prod_{p=1}^{q}\bigg(1 + \frac{ |\xi|^{2}}{p^{2s}\sigma(p)}\bigg)\geq 2^{q}\geq \frac{1}{2} \exp\left( \log 2 \frac{ |\xi|^{1/s}}{\sigma(|\xi|^{1/s}+1)}\right).
	\end{align}
	By combining to \eqref{NomeparaaCdeepsilon} to \eqref{estimate for Q}, one infers that
	\begin{align*}
		\bigg| \frac{ \hat{u}(\xi)}{Q(i\xi)}\bigg| &\leq  C(\epsilon) 2\exp\bigg(\epsilon| \xi|^{1/s}-\log 2 \frac{|\xi|^{1/s}}{\sigma(|\xi|^{1/s}+1)} \bigg)\\
		&\leq  2 \exp\bigg(\log C(\epsilon)+ \epsilon|\xi|^{1/s}- \log 2 \frac{|\xi|^{1/s}}{\sigma(|\xi|^{1/s}+1)} \bigg).
	\end{align*}
	
If we further assume that $|\xi|$ is large enough so $ a<(|\xi|^{1/s}+1)^{\mu}$,
and if we choose $\epsilon= \gamma((|\xi|^{1/s}+1)^{\mu})$, then $\log C(\epsilon) = (|\xi|^{1/s}+1)^{\mu}$ and
	\begin{align*}
		\bigg| \frac{\hat{u}( \xi)}{Q(i\xi)}\bigg|  &\leq 2 \exp\bigg((|\xi|^{1/s}+1)^{\mu}+ \epsilon|\xi|^{1/s}-  6 |\xi|^{1/s} \left[ (|\xi|^{1/s}+1)^{-\mu}+ \epsilon \right] \bigg)
		\\    
		&\leq  2 \exp\bigg((|\xi|^{1/s}+1)^{\mu} -  \frac{6 |\xi|^{1/s}}{(|\xi|^{1/s}+1)^{\mu} } \bigg)\\
		&\leq  2 \exp\bigg( \frac{2|\xi|^{2\mu/s}-  3 |\xi|^{1/s}}{|\xi|^{\mu/s}} \bigg)\\
		&\leq  2 \exp\bigg( \frac{2|\xi|^{1/s}-  3 |\xi|^{1/s}}{|\xi|^{\mu/s}} \bigg)\\
		&\leq  2 \exp\big( -   |\xi|^{(1-\mu)/s} \big).
	\end{align*}
	
Given $t>s$, let $\mu = \displaystyle \min  \left\{ \frac{1}{2}, 1- \frac{s}{t} \right\}$; then there exists $C>0$ such that
\begin{align}\label{DesigualdadeFundamentalpararepresentacao}
\bigg| \frac{\hat{u}(\xi)}{Q(i\xi)}\bigg|     \leq  C e^{-  |\xi|^{1/t} }, \quad \forall \xi \in \R^m,
\end{align}
which implies that
	\begin{align*}
		f_\delta(x)= \frac{1}{(2\pi)^{m}} \int_{\R^m} \frac{\hat{u}(\xi)}{Q(i\xi)}e^{i x \xi- \delta|\xi|^{2}} \dd \xi, \ \ \delta > 0, 
	\end{align*}
	is a uniformly bounded family of functions in $G^{t}(\R^m)$. Since $G^{t}(\R^m)$ is a Montel space, there exists a subsequence $\delta_{k}$ converging to $0$ for which $f_{\delta_{k}}$ converges in $G^{t}(\R^m)$. Let us call $f$ its limit.
	
	Notice that
	\begin{align*}
	Q f_\delta(x) = \frac{1}{(2\pi)^{m}} \int_{\R^m} \hat{u}(\xi)e^{i x \xi- \delta|\xi|^{2}} \dd \xi.  
	\end{align*}
	Thus, on the one hand, $Q f_\delta$  converges to $u$ in $\D'_s(\R^m)$ when $\delta \to 0$ and, on the other hand, $Q f_{\delta_{k}}$ converges to $Qf$ in $\D'_s(\R^m)$, which allows us to conclude that
	\begin{align*}
		Q f= u \ \text{in $\D'_s(\Rm)$},
	\end{align*}
 finalizing the proof. 
\end{proof}

\begin{Obs}\label{representatudo}
  By analyzing the last step of the proof one can see that $Q$ can be used not only to represent $u$ but any other ultradistribution that satisfies \eqref{DesigualdadeFundamentalpararepresentacao}. In particular, it is not difficult to check that every distribution with compact support and even  every ultradistribution of order $s_0< s$ with compact support can be represented by the same $Q$.
  
  Moreover, given  $u_1, \ldots, u_\ell\in \E'_s(\R^m)$ a family of compactly supported ultradistributions, we may define $C:(0,\infty) \lra (0,\infty)$ satisfying  \eqref{NomeparaaCdeepsilon} for all the ultradistributions and the proof above provides an ultradifferential operator of type $\{p!^{s}\}$, $Q$, and $f_1,\ldots, f_\ell\in G^{t}(\R^m)$ such that $u_j= Qf_j$, for any $j \in \left\{1, \ldots, \ell \right\}$.
\end{Obs}

Until now, condition $(\mathfrak{e})$ has not played any role. However it is important because it guarantees that the kernel of an ultradifferential operator contains only real-analytic functions. Our next result is an adaptation of \cite[Lemma $1.1$]{cor}.

\begin{Teo}\label{propdokernel}
	Let $Q$ be an ultradifferential operator of type $\{p!^s\}$  that satisfies condition $(\mathfrak{e})$ and  fix $V$ an open set in $\R^m$. If $v\in \Cinf(V)$ satisfies $Qv=0$, then $v\in \Com(V)$.  
\end{Teo}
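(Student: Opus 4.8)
The plan is to localize the problem and represent $v$ near a given point by convolution with a fundamental solution of $Q$ that is real-analytic off the origin. Fix $x_0\in V$; since real-analyticity is local it suffices to produce a neighborhood of $x_0$ on which $v$ is real-analytic, and after a translation we may assume $x_0=0$. Choose $r>0$ with $\overline{B_{2r}}\subset V$ and a cutoff $\chi\in\Cinf_c(V)$ with $\chi\equiv 1$ on a neighborhood of $\overline{B_r}$ and $\supp\chi\subset B_{2r}$; set $w=\chi v\in\E'(\Rm)\subset\E'_s(\Rm)$. Since $Q$ is a local operator — its extension to $\D'_s$ by transposition preserves supports because $Q$ is local on $G^s$ — the hypothesis $Qv=0$ in $V$ gives that $g\dfn Qw=Q(\chi v)\in\E'_s(\Rm)$ vanishes on a neighborhood of $\overline{B_r}$; hence $\supp g$ is a compact subset of $\overline{B_{2r}}\setminus B_r$, in particular bounded away from $0$.

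The core step is the construction of a fundamental solution. Condition $(\mathfrak{e})$ provides $c>0$ and $\kappa\in(0,1)$ with $|Q(i\zeta)|\geq c$ on the conic tube $\Gamma_\kappa=\{\zeta=\xi+i\eta\in\Cm:|\eta|\leq\kappa|\xi|\}$; in particular $1/Q(i\xi)$ is bounded by $1/c$ and real-analytic on $\Rm$, so $E\dfn\mathscr{F}^{-1}[1/Q(i\cdot)]\in\mathscr{S}'(\Rm)$ is a tempered distribution with $Q(\del_x)E=\delta$. The claim is that $E$ is real-analytic on $\Rm\setminus\{0\}$. Fix $x_*\neq 0$ and write $E=E_0+E_1$ with $\widehat{E_0}=\psi/Q(i\xi)$ and $\widehat{E_1}=(1-\psi)/Q(i\xi)$, where $\psi\in\Cinf_c(\Rm)$, $\psi\equiv 1$ near $0$; then $E_0$ is the inverse Fourier transform of a compactly supported smooth function, hence extends to an entire function. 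For $E_1$ one inserts a regularizing factor $e^{-\epsilon|\xi|^2}$ and, using that $1/Q(i\zeta)$ is holomorphic and bounded by $1/c$ on $\Gamma_\kappa$, deforms the contour $\xi\mapsto\xi+i\delta_0|\xi|\tfrac{x_*}{|x_*|}$ with $0<\delta_0<\kappa$ small; for $x$ in a complex neighborhood of $x_*$ this yields $|e^{ix\zeta}|\leq e^{-\frac{\delta_0}{2}|x_*|\,|\xi|}$, so after letting $\epsilon\to 0$ the resulting integral converges absolutely and locally uniformly in a complex neighborhood of $x_*$ and represents $E_1$ there. Thus $E_1$, and hence $E$, is real-analytic near $x_*$, and since $x_*\neq 0$ is arbitrary, $E\in\Com(\Rm\setminus\{0\})$.

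To conclude, note that $w=\delta*w=(QE)*w=Q(E*w)=E*(Qw)=E*g$, using that $Q$ has constant coefficients and $w$ has compact support; this is an identity in $\D'_s$ whose left-hand side is a smooth compactly supported function. Pick $\rho\in G^s_c(\Rm)$ with $\rho\equiv 1$ near $\supp g$ and $\supp\rho\subset\{|y|>r/2\}$; since $\Com\subset G^s$, for each $x\in B_{r/2}$ the function $y\mapsto\rho(y)E(x-y)$ lies in $G^s_c(\Rm)$ and $w(x)=\langle g(y),\rho(y)E(x-y)\rangle$. Because $(x,y)\mapsto E(x-y)$ is real-analytic on $B_{r/2}\times\supp\rho$ (there $x-y$ stays away from $0$), differentiating under the bracket and combining the Cauchy estimates for $E$ with the order-$s$ ultradistributional estimates for $g$ shows that $x\mapsto w(x)$ obeys real-analytic bounds on $B_{r/2}$. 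Hence $v=w$ is real-analytic on $B_{r/2}$, a neighborhood of $x_0$, finishing the proof. The main obstacle is the contour-deformation argument giving the real-analyticity of $E$ away from the origin: one must coordinate the frequency cutoff with the deformation so that the holomorphy and the lower bound of $Q(i\zeta)$ on $\Gamma_\kappa$ are used simultaneously, and justify the passage to the limit $\epsilon\to 0$.
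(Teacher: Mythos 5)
Your proof is correct in its overall strategy and takes a genuinely different route from the paper's. You build an explicit fundamental solution $E=\mathscr{F}^{-1}[1/Q(i\cdot)]$, prove by contour deformation in the cone from condition $(\mathfrak{e})$ that $E$ is real-analytic on $\Rm\setminus\{0\}$, and then exploit the support decomposition $w=E*g$ with $g=Q(\chi v)$ concentrated on an annulus. The paper never constructs $E$: it works with the $\rho$-regularized kernel $H_\rho(z,w)=\frac{1}{(2\pi)^m}\int \frac{1}{Q(-i\xi)}e^{i(z-w)\xi-\rho|\xi|^2}\,\dd\xi$, pairs $v$ against $\varphi\, Q(-\del)H_\rho(z,\cdot)$, moves the issue to a commutator $Q(-\del)(\varphi H_\rho)-\varphi\, Q(-\del)H_\rho$ supported on the annulus $K=\overline V\setminus V_1$, bounds that commutator by Cauchy estimates and the Gevrey estimates of $\varphi$, performs the same contour deformation to bound $H_\rho$ on a complex $\delta$-thickening of $K$, and concludes by Montel that a family of holomorphic regularizations of $\varphi v$ converges to a holomorphic extension. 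Both arguments rest on the same deformation $\xi\mapsto \xi+i\lambda|\xi|\,\frac{\Re(z-w)}{|\Re(z-w)|}$ and the lower bound on $|Q(i\zeta)|$ in the cone; the paper's version has the advantage of staying entirely inside pairings of $v$ with $G^s_c$ test functions (so there is no need to make sense of $Q$ or of convolution on tempered or ultradistributional spaces), at the cost of an extra regularization parameter $\rho$ and a normal-families step at the end. Your version is conceptually cleaner once the objects are granted, but it makes formal moves that need justification in the ultradistributional setting: $E\in\mathscr S'$, $g\in\E'_s$, and the chain $w=\delta*w=(QE)*w=Q(E*w)=E*(Qw)=E*g$ involves applying the infinite-order operator $Q$ on $\mathscr S'$ (where it is not a continuous map) and convolving $\mathscr S'$ with $\E'_s$, neither of which is standard; one must instead check the identity directly on the Fourier side (where $\widehat{E*g}=\widehat w$ is a harmless equality of Schwartz functions since $w=\chi v\in\Cinf_c$) and then justify passing to the pointwise representation $w(x)=\langle g(y),\rho(y)E(x-y)\rangle$ via an approximation of $g$ by Gevrey functions. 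This is fixable, but it is the place where the paper's duality formulation is genuinely more economical. Everything else in your proposal — locality of $Q$ on $\D'_s$ guaranteeing $\supp g$ avoids a neighbourhood of $x_0$, the splitting $E=E_0+E_1$ with $E_0$ entire, and the final estimate combining Cauchy bounds for $E$ with the order-$s$ bound on $g$ and using $s>1$ to absorb the $\beta!$-vs-$\beta!^s$ mismatch — is sound.
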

\begin{proof}
Since the result has a local nature, we may assume that $V$ is a bounded open set and fix $p_0 \in V$.	Denote by $y$ the variables on $V$ and pick  $v\in \Cinf(V)$ satisfying $Q(\del_y)v=0$. Then
	\begin{equation} \label{identity transpose}
		\int_V v(y) [Q(-\del_y)\psi(y)] \dd y=0, \quad \forall  \psi\in G^{s}_c(V).
	\end{equation}
Let $V_1\subset\subset V$ be a relatively compact open neighborhood of $p_0$ and fix $\varphi\in G^{s}_c(V)$ such that  $\varphi \equiv 1$ on $V_1$. Notice that, for any $H\in \mathcal{O}(\C^m)$,
	\begin{equation} \label{chain rule consequence}
		Q(-\del_y)[\varphi H](y)- \varphi Q(-\del_y) H(y)= \sum_{\alpha \geq 0} (-1)^{|\alpha|} a_\alpha \sum_{\beta\leq \alpha, |\beta|>0} { \alpha \choose \beta} \del_y^{\beta} \varphi(y) \del_y^{\alpha-\beta} H(y).
	\end{equation}

	Let $\Gamma$ be a conic neighborhood of $\R^m$ where condition $(\mathfrak{e})$ holds.  Pick $\lambda \in  \big(0, \frac{1}{4} \big)$ such that $\Gamma_\lambda\subset \subset  \Gamma$, where
	\begin{align*}
		\Gamma_\lambda = \{ \zeta: |\Im \zeta|< \lambda |\Re \zeta|\}.    
	\end{align*}
Notice that $K= \overline{V}\setminus V_1$ is a compact set; for $\delta > 0$,  let 
	\begin{equation*}
\text{$D_{\delta}= \{ z\in \C^m: d(z, p_0)< \delta\}$, \ \ \ $K_\delta= \{ z\in \C^m: d(z, K)\leq \delta\}.$}
	\end{equation*}
By choosing $\delta$ sufficiently small, we may assume that $d(D_{\delta},K_\delta)> 3 \delta/\lambda$. From now on, we fix a $\delta \in (0,1)$ satisfying such condition.  

It follows from  \eqref{identity transpose} and \eqref{chain rule consequence} that
	\begin{align}
		\left|\int_V v(y) \varphi(y) Q(-\del_y) H(y) \dd y \right| &= \bigg|\int_V v(y)\big[ Q(-\del_y)[\varphi H](y)- \varphi Q(-\del_y) H(y)\big]\dd y\bigg|\nonumber \\
		&\leq  \sum_{\alpha \geq 0} | a_\alpha| \sum_{\beta\leq \alpha, |\beta|>0} { \alpha \choose \beta} \bigg|\int_{V\setminus V_1} v(y)\del_y^{\beta}  \varphi(y) \del_y^{\alpha-\beta} H(y)\dd y\bigg|. \label{integral estimate v}
	\end{align}  

	Since $Q$ is of class $\{p!^{s}\}$, for every $\epsilon>0$ there exists $C_\epsilon>0$ for which
	\begin{align*}
		|a_\alpha|\leq C_\epsilon \epsilon^{|\alpha|} \frac{1}{|\alpha|!^{s}}.
	\end{align*}
	From the fact that $\varphi\in G^{s}_c(V)$ it follows the existence of $C, h>0$  such that
	\begin{align*}
		|\del_y^{\beta} \varphi(y)|\leq C h^{|\beta|} \beta!^{s}, \ \ \forall y \in \R^{m}.
	\end{align*}
	Furthermore, one deduces from Cauchy estimates that
	\begin{align*}
		|\del_y^{\alpha} H(y)|\leq \frac{|\alpha|!}{\delta^{|\alpha|}}\sup_{K_\delta} |H|, \quad \forall y \in K.
	\end{align*}

	Define $C_v= \displaystyle \int_K |v(y)| \dd y$; by applying the three inequalities above to \eqref{integral estimate v}, we deduce that
	\begin{align*}
	\left|\int_V v(y) \varphi(y) Q(-\del_y) H(y) \dd y \right| &\leq  \sum_{|\alpha| \geq 0} | a_\alpha| \sum_{\beta\leq \alpha \atop |\beta|>0} { \alpha \choose \beta} \int_{V\setminus V_1}|v(y)||\del_y^{\beta} \varphi(y)| |\del_y^{\alpha-\beta} H(y)|\dd y\\
		&\leq \big(  \sup_{K_\delta} |H| \big) C_v C_\epsilon \sum_{|\alpha| \geq 0}  \epsilon^{|\alpha|}  \sum_{\beta\leq \alpha \atop |\beta|>0} \frac{|\alpha- \beta|!}{\delta^{|\alpha-\beta|}} \frac{1}{|\alpha|!^{s}}  { \alpha \choose \beta}  C h^{|\beta|} \beta!^{s}\\
		&\leq \Big(  \sup_{K_\delta} |H| \Big)  C_v C  C_\epsilon\sum_{|\alpha| \geq 0}   \bigg(\frac{(1+ h)\epsilon}{\delta}\bigg)^{|\alpha|}. 
	\end{align*}  
	Thus, if $\epsilon$ is taken sufficiently small, it follows the existence of  $\widetilde{C}_\delta > 0$ such that
	\begin{equation} \label{estimate integral v II}
		\left|\int_V v(y) \varphi(y) Q(-\del_y) H(y) \dd y \right|\leq  \widetilde{C}_\delta \Big(  \sup_{K_\delta} |H| \Big). 
	\end{equation}
	
	Now consider
	\begin{align*}
		w \mapsto H_\rho(z, w)= \frac{1}{(2\pi)^{m}} \int_{\R^m} \frac{1}{Q(-i\xi)} e^{i(z-w)\xi-\rho |\xi|^{2}} \dd \xi,
	\end{align*}
	where $z$ is a parameter in $D_{\delta}$ and $\rho>0.$ Note that
	\begin{align*}
		Q(- \del_y) H_{\rho}(z, y)= \frac{1}{(2\pi)^{m}} \int_{\R^m}  e^{i(z-y)\xi-\rho |\xi|^{2}} \dd \xi.
	\end{align*}
Therefore it is a consequence of \eqref{estimate integral v II} that
	\begin{align*}
		\bigg| \frac{1}{(2\pi)^{m}} \int_{\R^m}  e^{iz\xi-\rho |\xi|^{2}} \widehat{\varphi v}(\xi)   \dd \xi\bigg|&=     \bigg|\int_V v(y) \varphi(y) \frac{1}{(2\pi)^{m}} \int_{\R^m}  e^{i(z-y)\xi-\rho |\xi|^{2}} \dd \xi\dd y\bigg|\\
		&=     \bigg|\int_V v(y) \varphi(y) Q(-\del_y) H_{\rho}(z,y)\big]\dd y\bigg|\\
		&\leq  \widetilde{C}_\delta \sup_{y \in K_\delta} |H_{\rho}(z,y)|. 
	\end{align*}
	
It only remains to prove the existence of $\widetilde{C}>0$ such that for every  $z\in D_{\delta}$ and  $\rho\in (0,1)$, it holds that
\begin{align*}
\sup_{y \in K_\delta} |H_{\rho}(z,y)|< \tilde{C}.
\end{align*}
In fact, by Montel Theorem this would imply that $\varphi v \big{|}_{D_{\delta} \cap \R^{m}}$ is the restriction of a holomorphic function. Since $\varphi \equiv 1$ on $V_{1}$, we would deduce that $v \big{|}_{D_{\delta} \cap V_{1}}$ is the restriction of a holomorphic function and therefore it would be real-analytic in a neighborhood of $p_0$. 
	
With such goal, we apply the following deformation: $\xi \mapsto \zeta(\xi)= \xi+ i\lambda \frac{\Re (z-w)}{|\Re (z-w)|}|\xi|$. In this case, it follows that
	\begin{align*}
		H_\rho(z, w)
		&= \frac{1}{(2\pi)^{m}} \int_{\zeta(\R^m)} \frac{1}{Q(-i\zeta)} e^{i(z-w)\zeta-\rho \langle \zeta\rangle^{2}} \dd \zeta.
	\end{align*}
Next we estimate the terms inside the integral;	note first that $|Q(i\zeta)|>c$ for $\zeta \in \Gamma$.  Furthermore,
	\begin{align*}
		\Re \{i(z-w)\zeta-\rho \langle \zeta\rangle^{2}\}&= -\Im (z-w)\xi - \lambda|\Re(z-w)| |\xi|- \rho\bigg(|\xi|^{2}- \lambda^{2}|\xi|^{2}\bigg)\\
		&\leq -\Im (z-w)\xi - \lambda|\Re(z-w)| |\xi|.
	\end{align*}
Using that $z\in D_{\delta}$ and $w\in K_\delta$, we infer that $|\Im(z-w)|\leq 2\delta$ and $|\Re(z-w)|\geq \displaystyle \frac{3\delta}{\lambda} - 2\delta$, using this we conclude
	\begin{align*}
		\Re \{i(z-w)\zeta-\rho \langle \zeta\rangle^{2}\}\leq -\frac{\delta|\xi|}{2}.
	\end{align*}
	This inequalities yields that $|H_\rho(z,w)|$ is uniformly bounded for any $z \in D$, $w\in K_\delta$ and $\rho\in (0,1)$. As we desired.
\end{proof}

The previous result can be enhanced by applying the  representation theorem for ultradistributions.

\begin{Cor}\label{Cordokernelultradist}
	Let $Q$ an ultradifferential operator of type $\{p!^s\}$  that satisfies condition $(\mathfrak{e})$ and fix $V$ an open set in $\R^m$. If $v\in \D'_s(V)$ satisfies $Qv=0$, then $v\in \Com(V)$.  
\end{Cor}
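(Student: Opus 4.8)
The plan is to reduce to Theorem~\ref{propdokernel}, which already handles smooth elements of the kernel, by combining $Q$ with a representing operator supplied by Theorem~\ref{prop a3}. Since the statement is local, I would fix $p_0 \in V$ and show $v$ is real-analytic on a neighborhood of $p_0$. Choose $\chi \in G^s_c(V)$ with $\chi \equiv 1$ on an open neighborhood $V_1 \ni p_0$, so $\chi v \in \E'_s(V)$; fix any $t > s$ and apply Theorem~\ref{prop a3} to produce an ultradifferential operator $R$ of type $\{p!^s\}$ satisfying condition $(\mathfrak{e})$ and $g \in G^t(V) \subset \Cinf(V)$ with $\chi v = Rg$ in $\D'_s(V)$. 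On $V_1$ one then has $v = Rg$, and since $Q$ is a local operator, $0 = Qv = Q(Rg) = (QR)g$ on $V_1$, where the composition makes sense and the series rearrange because $g$ is Gevrey.

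Next I would check that $QR$ is again an ultradifferential operator of type $\{p!^s\}$: its symbol is the product of the two symbols, each bounded by $C_\delta e^{\delta|\zeta|^{1/s}}$, so the product obeys the same type of estimate. Moreover $QR$ satisfies condition $(\mathfrak{e})$: if $|Q(i\zeta)| \geq c_1$ on a conic neighborhood $\Gamma_1$ of $\R^m$ and $|R(i\zeta)| \geq c_2$ on a conic neighborhood $\Gamma_2$, then $|(QR)(i\zeta)| \geq c_1 c_2$ on $\Gamma_1 \cap \Gamma_2$, which is still a conic neighborhood of $\R^m$. Applying Theorem~\ref{propdokernel} to the operator $QR$ and the smooth function $g$, which lies in its kernel on $V_1$, we conclude that $g$ is real-analytic on a neighborhood of $p_0$.

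Finally, it remains to verify that $R$ maps real-analytic functions to real-analytic functions locally. Using Cauchy estimates $|\del^\alpha g(x)| \leq C L^{|\alpha|}|\alpha|!$ near $p_0$ together with $|a_\alpha| \leq C_\epsilon \epsilon^{|\alpha|}/|\alpha|!^{s}$, one bounds $\del^\beta(Rg)$ by a convergent series in which the factor $|\alpha|!^{1-s}$ (here $s>1$) dominates the geometric growth, producing bounds of real-analytic type for $Rg$. Hence $v = Rg$ is real-analytic near $p_0$, and since $p_0 \in V$ was arbitrary, $v \in \Com(V)$. The only genuinely delicate points are confirming that $QR$ inherits both condition $(\mathfrak{e})$ and the type $\{p!^s\}$, and that $R$ preserves analyticity; the rest is a direct combination of Theorems~\ref{prop a3} and~\ref{propdokernel}.
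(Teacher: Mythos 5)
Your proof follows the paper's argument exactly: localize via a Gevrey cutoff $\chi$, use Theorem~\ref{prop a3} to represent $\chi v$ as $Jf$ for an ultradifferential operator $J$ of type $\{p!^{s}\}$ satisfying $(\mathfrak{e})$ and $f$ smooth, pass to $QJf = 0$, note that the composition $QJ$ inherits both the type estimate and condition $(\mathfrak{e})$, apply Theorem~\ref{propdokernel} to conclude $f$ is real-analytic near $p_0$, and then recover $v = Jf \in \Com$ from the action of $J$ on real-analytic functions. The only cosmetic difference is that you spell out the last step via Cauchy estimates and the factor $|\alpha|!^{1-s}$, whereas the paper simply invokes the continuity of $J$ on $\Com(W)$; both are the same content.
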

\begin{proof}
	Given $v \in \D'_s(V)$ for which $Qv=0$, fix $p_0 \in V$; we intend to show that $v$ is real-analytic in a neighborhood of $p_0$. Let $\chi\in G^{s}_c(V)$ such that $\chi \equiv 1$ on an open neighborhood $W$ of $p_0$.
	
	It follows from Theorem~\ref{prop a3} the existence of an ultradifferential operator  $J$ of type $\{p!^s\}$ satisfying condition $(\mathfrak{e})$ and $f\in \Cinf(V)$ such that
	\begin{align*}
		J f= \chi v \ \text{in $\D'_s(V)$}.
	\end{align*}
	Hence
	\begin{align*}
		QJ f= Q u=0 \ \text{on $W$.}
	\end{align*}

	 Notice that the composition $Q \circ J$ is made of two operators of type $\{p!^{s}\}$ which satisfy condition $(\mathfrak{e})$, which allows us to deduce that it is an ultradifferential operator of same type. Theorem~\ref{propdokernel} implies that $f\in \Com(W)$; since $J$ is continuous when restricted to $\Com(W)$, it follows that $v=Jf\in \Com(W),$ as we intended to prove.
       \end{proof}

       \begin{Obs}\label{remarkGrVaiemGr}
       Pick $r\in [1, s)$ and consider $g \in G^{r}(U)$ for which there is $v \in \D'_s(U)$ and $Q$ an ultradifferential operator of type $\{ p!^{s}\}$ such that $Qv= g$ than we have that $v\in G^{r}(U)$. Indeed, fix $x_0\in U$ and $\chi\in G^{t}_c(U)$ such that $\chi=1$ in a neighborhood of $x_0$. Define
       \begin{align*}
         f_\delta(x)= \frac{1}{(2\pi)^{m}} \int_{\R^m} \frac{ \widehat{\chi g}(\xi)}{Q(i \xi)} e^{i x \xi- \delta |\xi|^2} \dd \xi
       \end{align*}
       is a uniformly bounded family of functions in $G^r(\R^m)$ and thus there is a subsequence $f_{\delta_k}$ converging to a function $f$ in $G^r(\R^m)$. Note that
       \begin{align*}
         Q f_\delta(x) = \frac{1}{(2\pi)^m} \int_{\R^m} \widehat{\chi g}(\xi) e^{i x \xi- \delta |\xi|^2} \dd \xi
       \end{align*}
       converges to $\chi g$ in $G^{r}(\Rm)$ by the continuity of $Q$ in $G^{r}(\Rm)$ it follows that $Qf= \chi g$. 
       Therefore, it follows that
       \begin{align*}
        \big(Q(v-f)\big)|_{V}= (\chi Qv -Qf) |_V=0.
       \end{align*}
       By Theorem~\ref{Cordokernelultradist} it follows that $v-f\in C^{\omega}(V)$ and therefore $v$ restricted to a neighborhood of $x_0$ belongs to $G^{r}$; since $x_0$ is arbitrary, we conclude that $v\in G^{r}(U)$.
       
        This result can also be adapted by ultradifferential operators defined on locally integrable structures used in Section~\ref{LIS Gevrey}; in fact, if we are dealing with a Gevrey locally integrable structure of order $s_0$, then if $1< s_0 < r< s$, it is a consequence of \cite[Proposition 7.2]{rag} that the kernel of $Q_s$ only has Gevrey functions of order $s_0$ thus we can adapt the argument above and the Baouendi-Treves approximation theorem to prove that if $g\in G^{r}(U)$ and $Q_s v= g$, then $v\in G^{r}(U)$. 
       \end{Obs}

\begin{Obs}\label{RepresentacaodeHiperfuncoes}
  Similar results also hold true in the context of hyperfunctions. For instance,  it was proved that in \cite[Theorem 1.1]{cor} that if $u\in \mathcal{B}(U)$, then given $x_{0}\in U$ one may find a hyperdifferential operator $Q$, i.e., an ultradifferential operator of type $\{p!\}$, satisfying $(\mathfrak{e})$, $V \subset\subset U$ a neighborhood of $p$ and $f\in \Cinf(V)$ such that $u= Q f$ on $V$. Furthermore, by adapting  the proof of Theorem~\ref{prop a3}, one can prove the following version of Cordaro's result:
  \end{Obs}
  
	\begin{Teo} \label{hiperteo}
		Consider $u\in \mathcal{B}(\R^n)$. Given $s>1$ and $U$ a bounded open subset of $\R^n$, there exist $Q$ an hyperdifferential operator of type 
		$\{p!\}$ satisfying condition $(\mathfrak{e})$  and $f \in G^{s}(U)$ such that 
		\begin{equation*}
			u= Qf \  \text{in} \ \mathcal{B}(U).
		\end{equation*}
	\end{Teo}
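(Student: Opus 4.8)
The plan is to follow the proof of Theorem~\ref{prop a3} almost verbatim; the only genuinely new ingredient is a preliminary reduction to the case of a compactly supported hyperfunction, which is what makes a Fourier--Laplace transform available. Since the sheaf $\mathcal{B}$ of hyperfunctions is flabby, it is acyclic for sections with support in any closed subset of $\R^n$; applying the Mayer--Vietoris sequence for supports to the closed sets $\overline{U}$ and $\R^n\setminus U$, whose union is $\R^n$, one writes $u = v + z$ in $\mathcal{B}(\R^n)$ with $\supp v \subset \overline{U}$ and $\supp z \cap U = \emptyset$. Since $U$ is bounded, $\overline{U}$ is compact, so $v$ is a compactly supported hyperfunction; and since $z$ vanishes on $U$ we have $v|_U = u|_U$. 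Thus it suffices to find $Q$ of type $\{p!\}$ satisfying $(\mathfrak{e})$ and $f \in G^{s}(\R^n)$ with $Qf = v$ on $\R^n$, and then restrict to $U$.

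Being a compactly supported hyperfunction, $v$ is an analytic functional, so its Fourier--Laplace transform $\widehat{v}$ is entire and, by the Ehrenpreis--Martineau estimate, satisfies: for every $\epsilon > 0$ there is $C_\epsilon > 0$ with $|\widehat{v}(\xi)| \le C_\epsilon e^{\epsilon|\xi|}$ for all $\xi \in \R^n$. This is exactly inequality \eqref{Desultradistribuicao2} with Gevrey exponent equal to $1$. From this point the construction in the proof of Theorem~\ref{prop a3} goes through unchanged, with ``$s$'' replaced by $1$ and ``$t$'' by the given $s$: majorize $C_\epsilon$ by a continuous strictly decreasing $C(\epsilon)$ with $C(\epsilon) > 1$ for $\epsilon \le 1$ and $C(\epsilon) \to \infty$ as $\epsilon \to 0^+$; let $\gamma$ be the inverse of $\log C$; set $\mu = \min\{1/2,\, 1 - 1/s\}$ and define $\sigma$ by the same formula as there; and put $Q(\zeta) = \prod_{p \ge 1}\bigl(1 - \langle\zeta\rangle^2/(p^{2}\sigma(p))\bigr)$, where $\langle\zeta\rangle^2 = \zeta_1^2 + \cdots + \zeta_n^2$. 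By Lemma~\ref{Operadoresdeordeminfinitaexistem} applied with $s = 1$, $Q$ is a hyperdifferential operator of type $\{p!\}$ satisfying $(\mathfrak{e})$, and the same estimates give $|\widehat{v}(\xi)/Q(i\xi)| \le C e^{-|\xi|^{1/s}}$ for all $\xi \in \R^n$, which is \eqref{DesigualdadeFundamentalpararepresentacao}. Hence the family $f_\delta(x) = (2\pi)^{-n}\int \widehat{v}(\xi)Q(i\xi)^{-1} e^{ix\xi - \delta|\xi|^2}\,d\xi$, $\delta > 0$, is uniformly bounded in $G^{s}(\R^n)$, and by the Montel property a subsequence $f_{\delta_k}$ converges in $G^{s}(\R^n)$ to some $f$; then $f|_U \in G^{s}(U)$.

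It remains to identify the limit. On one hand $Qf_{\delta_k} \to Qf$ in $G^{s}(\R^n)$, hence in $\mathcal{B}(\R^n)$, by continuity of $Q$ on $G^{s}$. On the other hand $Qf_\delta(x) = (2\pi)^{-n}\int \widehat{v}(\xi) e^{ix\xi - \delta|\xi|^2}\,d\xi = (v * G_\delta)(x)$ is the heat-kernel regularization of $v$, which converges to $v$ in the weak topology of analytic functionals as $\delta \to 0^+$ (for $\phi$ holomorphic on a neighborhood of $\overline{U}$, $G_\delta * \phi \to \phi$ on that neighborhood). Comparing the two limits gives $Qf = v$ in $\mathcal{B}(\R^n)$, and restricting to $U$ yields $Qf = u$ in $\mathcal{B}(U)$, as desired.

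The main obstacle, compared with Theorem~\ref{prop a3}, is purely the hyperfunction bookkeeping at the two ends of the argument: justifying the flabbiness-based splitting $u = v + z$ with $v$ compactly supported and $v|_U = u|_U$, and checking that the heat regularization $v * G_\delta$ converges to $v$ weakly among analytic functionals, so that the passage to the limit in $Qf_\delta$ is legitimate. Everything in between is a transcription of the proof already carried out for Theorem~\ref{prop a3}.
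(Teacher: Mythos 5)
Your proposal is correct and follows essentially the same route the paper indicates: reduce to a compactly supported hyperfunction (equivalently, an analytic functional carried by $\overline{U}$ --- your flabbiness/Mayer--Vietoris splitting is just one standard way to obtain the representative the paper calls $\mu$), invoke the Ehrenpreis--Martineau estimate to get \eqref{Desultradistribuicao2} with exponent $1$, and re-run the construction of Theorem~\ref{prop a3} with the $\sigma$ built from $\gamma$, the symbol $Q(\zeta)=\prod_p\bigl(1-\langle\zeta\rangle^2/(p^2\sigma(p))\bigr)$, and $\mu=\min\{1/2,\,1-1/s\}$, identifying the limit via heat-kernel regularization. This matches the paper's sketch in substance, only spelling out the compact-support reduction and the limit identification that the paper leaves implicit with its references to \cite{cor} and \cite{Kan}.
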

	The main idea is to associate the hyperfunction $u$ to an analytic functional $\mu$ carried by a compact neighborhood of $U$ and use the continuity of $\mu$ to obtain a continuous strictly decrease function $\epsilon \mapsto C(\epsilon)$.  
	For more details on how to adapt Theorem~\ref{prop a3} to hyperfunctions one can see \cite{cor} and \cite{Kan}.  It is also not difficult to check that we have a result analogous to Remark~\eqref{representatudo}.

\begin{Obs} \label{kernel analytic case}
 Corollary~\ref{Cordokernelultradist} may be adapted as well to the hyperfunction setting. That is, if $Q$ is a hyperdifferential operator of type $\{p!\}$ that satisfies condition $(\mathfrak{e})$, then  every $u\in \mathcal{B}(V)$ such that $Qu =0$ is in fact real-analytic, see \cite[Theorem 1.2]{cor}.
\end{Obs}

In a similar fashion,  a structural theorem can be proved using ultradifferential operators of type $(p!^{s})$, where an ultradifferential operator can be used to represent all ultradistributions of order $s$. For our purpose, we need a representation theorem of Roumieu ultradistributions in terms of operators of type $(p!^{s})$. Since operators of type $(p!^s)$  are discontinuous when acting on $\D'_s(\R^m)$, we need to reinterpret this representation in a broader space, for instance, in the space of ultradistributions of order $s_0$ with $s_0< s$. 

\begin{Pro}\cite[Theorem 6.2]{rag} \label{proposition a4}
	There exists  an infinite order operator $Q$, of type $(p!^{s})$ and satisfying condition $(\mathfrak{e})$ such that, for every ultradistribution with compact support $u\in \E'_s(U)$, there exists $f\in G^{s}(U)$ such that $u=Qf$ in $\D'_{s_0}(U)$, for any $s_0<s$.
\end{Pro}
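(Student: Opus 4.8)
The plan is to revisit the proof of Theorem~\ref{prop a3}, but to exploit the Beurling setting in order to produce a \emph{single} operator $Q$ that works simultaneously for all $u\in\E'_{s}(U)$, at the price of the weaker conclusion that $u=Qf$ holds only in $\D'_{s_{0}}$. Fix once and for all a constant $L>0$ and set
\begin{align*}
  Q(\zeta)= \prod_{p=1}^{\infty}\bigg(1- \frac{\langle \zeta\rangle^{2}}{p^{2s}L}\bigg).
\end{align*}
By the constant-$\sigma$ case of Lemma~\ref{Operadoresdeordeminfinitaexistem} and the Remark following it, $Q$ is the symbol of an ultradifferential operator of type $(p!^{s})$ satisfying condition $(\mathfrak{e})$; indeed $|Q(\zeta)|\leq \prod_{p}(1+|\zeta|^{2/s}/(p^{2}L^{1/s}))^{s}\leq L_{0}e^{r|\zeta|^{1/s}}$ for suitable $L_{0},r>0$, which is exactly \eqref{OpBeurling2}, while $|Q(i\zeta)|\geq 1$ on the cone $\{|\Im\zeta|\leq|\Re\zeta|/2\}$ just as in Lemma~\ref{Operadoresdeordeminfinitaexistem}.

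First I would prove a \emph{uniform exponential lower bound} for the symbol: since $\langle i\xi\rangle^{2}=-|\xi|^{2}$ one has $Q(i\xi)=\prod_{p}(1+|\xi|^{2}/(p^{2s}L))>0$, with no real zeros, and choosing $q\sim|\xi|^{1/s}$ the factors with $p\leq q$ are each at least $2$, so there are $c_{0},h>0$ \emph{depending only on $L$ and $s$} with
\begin{align*}
  |Q(i\xi)|\geq c_{0}\,e^{h|\xi|^{1/s}},\qquad \xi\in\Rm .
\end{align*}
This is the crux of the matter: a \emph{fixed} rate $h$ is enough precisely because every $u\in\E'_{s}(U)$ satisfies $|\hat u(\xi)|\leq C_{\epsilon}e^{\epsilon|\xi|^{1/s}}$ for \emph{all} $\epsilon>0$, whereas the Roumieu representation of Theorem~\ref{prop a3} genuinely requires the symbol to outgrow \emph{every} such exponential, which is exactly what forces $\sigma\to\infty$ and the $u$-dependence of $Q$ there.

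Then, given $u\in\E'_{s}(U)$ extended by $0$ to $\Rm$, I would take $\epsilon=h/2$ to get $|\hat u(\xi)/Q(i\xi)|\leq C'e^{-(h/2)|\xi|^{1/s}}$. Arguing exactly as in the proofs of Theorem~\ref{prop a3} and Theorem~\ref{Gevrey hypo implies ultra hypo} (bounding derivatives via $|\xi|^{|\alpha|}e^{-(h/4)|\xi|^{1/s}}\leq\widetilde h^{|\alpha|}|\alpha|!^{s}$), the family
\begin{align*}
  f_{\delta}(x)=\frac{1}{(2\pi)^{m}}\int_{\Rm}\frac{\hat u(\xi)}{Q(i\xi)}\,e^{ix\xi-\delta|\xi|^{2}}\,\dd\xi,\qquad\delta>0,
\end{align*}
is uniformly bounded in $G^{s}(\Rm)$, so by the Montel property a subsequence $f_{\delta_{k}}$ converges in $G^{s}(\Rm)$ to some $f$. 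On the other hand $Qf_{\delta}(x)=(2\pi)^{-m}\int_{\Rm}\hat u(\xi)e^{ix\xi-\delta|\xi|^{2}}\,\dd\xi\to u$ in $\D'_{s}(\Rm)$, hence also in $\D'_{s_{0}}(\Rm)$ for every $s_{0}<s$.

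The final step, which I expect to be the only genuine subtlety, is passing to the limit inside $Q$. Because $Q$ has type $(p!^{s})$ and $s_{0}<s$, its transpose $Q(-\del_{x})$ maps $G^{s_{0}}_{c}(\Rm)$ continuously into itself, the general term of $\sum a_{\alpha}\del^{\alpha}\psi$ being of size $C^{|\alpha|}|\alpha|!^{\,s_{0}-s}$ and hence summable; consequently $Q$ acts continuously on $\D'_{s_{0}}(\Rm)$, and together with the continuous inclusion $G^{s}(\Rm)\hookrightarrow\D'_{s_{0}}(\Rm)$ this yields $Qf_{\delta_{k}}\to Qf$ in $\D'_{s_{0}}(\Rm)$. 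Comparing the two limits gives $Qf=u$ in $\D'_{s_{0}}(\Rm)$ for every $s_{0}<s$; restricting to $U$ and using that $Q$ is local, we obtain $u=Qf|_{U}$ in $\D'_{s_{0}}(U)$ with $f|_{U}\in G^{s}(U)$, and $Q$ was chosen before $u$. The main obstacle is exactly this: type $(p!^{s})$ operators are discontinuous on $\D'_{s}$, so the identity cannot be upgraded to $\D'_{s}$ and the limiting argument must be run in the larger space $\D'_{s_{0}}$, which is why the statement is phrased the way it is.
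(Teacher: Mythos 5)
The paper does not prove Proposition~\ref{proposition a4}; it is quoted from \cite[Theorem 6.2]{rag}, so there is no in-text proof to compare against. Your reconstruction is correct and fits naturally into the framework the paper itself sets up: you take the constant-$\sigma$ operator allowed by the Remark following Lemma~\ref{Operadoresdeordeminfinitaexistem}, which has a \emph{fixed} lower bound $|Q(i\xi)|\geq c_0 e^{h|\xi|^{1/s}}$, observe that this fixed rate already dominates the growth $|\hat u(\xi)|\leq C_\epsilon e^{\epsilon|\xi|^{1/s}}$ (for all $\epsilon>0$) of any $u\in\E'_s$, and then run the same $f_\delta$/Montel-limit argument as Theorem~\ref{prop a3}. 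You also correctly identify the only delicate point --- passing to the limit under $Q$ is justified only in $\D'_{s_0}$ for $s_0<s$, since a $(p!^s)$-type operator preserves $G^{s_0}_c$ (the general term of $\sum a_\alpha\del^{\alpha+\beta}\psi$ is controlled by $|\alpha|!^{s_0-s}$, which is summable) but is not continuous on $\D'_s$ --- which is precisely why the identity $u=Qf$ is asserted only in $\D'_{s_0}(U)$.
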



It is not difficult to check that kernel of ultradifferential operators of type $(p!^{s})$ satisfying condition $(\mathfrak{e})$ is also a subspace of the space of real-analytic functions. Indeed, an ultradifferential operator $Q$ of type $(p!^{s})$ is an ultradifferential operator of type $\left\{p!^{s_{0}}\right\}$, for any $s_{0} < s$, so Corollary~\ref{Cordokernelultradist} still holds for $Q$ and so we can state the following result:

\begin{Teo}\label{PropkernelultradistBeurling}
	Let $Q$ be an ultradifferential operator of type $(p!^s)$  that satisfies condition $(\mathfrak{e})$ and fix $V$ an open set in $\R^m$. If $v\in \D'_s(V)$ satisfies $Qv=0$, then $v\in \Com(V)$.  
\end{Teo}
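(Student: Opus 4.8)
The plan is to reduce the statement directly to Corollary~\ref{Cordokernelultradist} by exploiting the observation made in the paragraph preceding the theorem: an ultradifferential operator of type $(p!^s)$ is also an ultradifferential operator of type $\{p!^{s_0}\}$ for every $s_0$ with $1 < s_0 < s$. First I would make this symbol-class inclusion precise. If the Taylor coefficients of $Q$ satisfy $|a_\alpha| \leq C h^{|\alpha|}/|\alpha|!^{s}$ as in \eqref{OpBeurling}, then writing $1/|\alpha|!^{s} = \big(1/|\alpha|!^{\,s-s_0}\big)\big(1/|\alpha|!^{s_0}\big)$ and using that, for every $\epsilon>0$, one has $h^{|\alpha|}/|\alpha|!^{\,s-s_0} \leq C_\epsilon\, \epsilon^{|\alpha|}$ with $C_\epsilon$ depending only on $h$, $\epsilon$ and $s-s_0$, one recovers estimate \eqref{OpRoumieu} with $s$ replaced by $s_0$. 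Condition $(\mathfrak{e})$ is a property of the symbol alone and is independent of the type, so it is preserved verbatim.

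Next I would fix $s_0 \in (1,s)$ and record the inclusions $G^{s_0}_c(V) \subset G^{s}_c(V)$ and, dually, $\D'_s(V) \subset \D'_{s_0}(V)$; hence the given $v \in \D'_s(V)$ may be regarded as an element of $\D'_{s_0}(V)$. Since the action of $Q$ on either ultradistribution space is defined by transposition of $Q(-\del_x)$ acting on the corresponding space of compactly supported Gevrey test functions, and these test-function actions are compatible under the inclusion $G^{s_0}_c(V) \hookrightarrow G^{s}_c(V)$, the hypothesis $Qv = 0$ in $\D'_s(V)$ forces $Qv = 0$ in $\D'_{s_0}(V)$ as well (it is the restriction of the functional $Q(-\del_x)^{t}$-pulled-back to the smaller test space).

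Finally, applying Corollary~\ref{Cordokernelultradist} to $Q$ — now viewed as an ultradifferential operator of type $\{p!^{s_0}\}$ satisfying $(\mathfrak{e})$ — together with $v \in \D'_{s_0}(V)$ and $Qv = 0$, yields $v \in \Com(V)$, which is the assertion. I do not expect a genuine obstacle here: the only points requiring any care are the elementary verification of the symbol-class inclusion $(p!^s) \subset \{p!^{s_0}\}$ and the bookkeeping that the two notions of ``$Q$ applied to $v$'' coincide; once these are in place, the conclusion is immediate from the already-proved Corollary~\ref{Cordokernelultradist}.
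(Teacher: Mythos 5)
Your proposal is correct and follows essentially the same argument the paper gives (in the paragraph immediately preceding the statement): pass from type $(p!^s)$ to type $\{p!^{s_0}\}$ for any $s_0\in(1,s)$ and invoke Corollary~\ref{Cordokernelultradist}, together with the inclusion $\D'_s\subset\D'_{s_0}$ and the compatibility of the two transposed actions of $Q$. The elementary estimate $C h^{|\alpha|}/|\alpha|!^{\,s-s_0}\leq C_\epsilon\,\epsilon^{|\alpha|}$ establishing the symbol-class inclusion is exactly the missing detail the paper leaves implicit.
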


\subsection{The action of ultradifferential operators on smooth functions}

In this subsection we intend to deal with a slightly different problem. Let  $u$ be an arbitrary element of $\Cinf(U)\setminus G^{s}(U)$; we intend to prove the existence of an infinite order operator $Q$ of type $\{p!^s\}$ for which
\begin{equation*}
Qu \in \D'_s(V)\setminus \D'(V), \ \ \text{for an  open set $V$ relatively compact in $U$.}
\end{equation*}

Before we proceed to its statement and proof, some previous steps are necessary. If $v \in G^{s}_c(U)$,  there exists $C>0$ such that 
\begin{align*}
	|\hat{v}(\xi)|\leq C^{N+1} N!^{s} |\xi|^{-N}, \ \ \forall N \in \Z_{+}, \ \  \forall \xi\in \R^{m}\setminus\{0\}.
\end{align*}
Hence, if $u \in \Cinf_c(U)\setminus G^{s}_c(U)$, for every $M\in \Z_+$ there exists $N_M \in \Z_+$ and $\xi_M\in \R^{m}\setminus\{0\}$ such that
\begin{align} \label{desigualdadedoUpSideDown}
	|\hat{u}(\xi_M)|> M^{N_M+1} N_M!^{s} |\xi_M|^{-N_M}.
\end{align}

Two assumptions can be made on the pairs $(N_M, \xi_M)$ in \eqref{desigualdadedoUpSideDown}. First note that  the map $\xi  \mapsto |\hat{u}(\xi)|$ is continuous; thus for every $R>0$ there exists $C_R>0$ such that
\begin{align*}
	|\xi|^{N_M}|\hat{u}(\xi)|\leq R^{N_{M}} C_R, \ \ \text{for every $\xi \in B_{R}(0)$}.
\end{align*}
Therefore if $M>0$ is such that $M > \max \left\{C_{R}, R\right\}$, we have
\begin{equation*}
 M^{N_M+1} N_M!^{s} \geq M^{N_M+1} =  M M^{N_M} > C_{R} R^{N_{M}},
\end{equation*}
which implies that $|\xi_M|> R$. 

Hence one may extract a strictly increasing sequence $M_j\in \Z_+$  such that
\begin{equation*}
\text{$\xi_{j}\doteq \xi_{M_j}$ satisfies \eqref{desigualdadedoUpSideDown} and $|\xi_{j}|> j^{j} (j+1)!^{s}, \ \ \forall j \in \Z_{+}$.}
\end{equation*}
Moreover, suppose for a moment that the sequence $N_j\doteq N_{M_j}$ is bounded by  some natural number $N_0$. Since $u\in \Cinf_c(U)$, there exists $C_{N_0}>0$ such that
\begin{align*}
	|\hat{u}(\xi)|\leq C_{N_0}(1+|\xi|)^{-N_0-1}, \ \ \forall \xi \in \R^{m}.
\end{align*}
Hence
\begin{align*}
	|\xi_{j}|^{-N_j}<  M^{N_j+1} N_j!^{s} |\xi_{j}|^{-N_j}\leq  C_{N_0}(1+|\xi_{j}|)^{-N_0-1}, \ \ \forall j \in \Z_{+},
\end{align*}
which yields
\begin{align*}
	1<  C_{N_0}(1+|\xi_{j}|)^{-N_0-1}|\xi_{j}|^{N_j}\leq C_{N_0}(1+|\xi_{j}|)^{-1},
\end{align*}
contradicting the fact that $|\xi_{j}| \lra \infty$. Thus one may assume that 
\begin{equation*} 
\text{$N_j\in \Z_+$ is a strictly  increasing sequence and  $\lim_{j \to \infty} N_j = +\infty$}.
\end{equation*}
We are prepared to prove the following

\begin{Teo}\label{PropA1}
	For every $u\in \Cinf(U) \setminus G^{s}(U)$ there exists $Q$ an ultradifferential operator of class $\{p!^s\}$ and such that $Qu \in \D'_s(V) \setminus \D'(V)$, where $V\subset U$ is open.  
\end{Teo}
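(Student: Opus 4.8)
The plan is to localize the problem near a point $x_0$ where $u$ fails to be $G^s$, cut off to obtain a compactly supported smooth function, and then build $Q$ from a carefully chosen sequence $\sigma(p)$ using the Hadamard-product construction of Lemma~\ref{Operadoresdeordeminfinitaexistem}. First I would fix $x_0\in U$ at which the germ of $u$ is not $G^s$, pick a relatively compact open $V$ with $x_0\in V\subset\subset U$, and choose $\chi\in\Cinf_c(U)$ with $\chi\equiv 1$ on a neighborhood of $\overline V$; then $\tilde u\dfn\chi u\in\Cinf_c(U)\setminus G^s_c(U)$. Applying the discussion preceding the statement to $\tilde u$, I extract strictly increasing sequences $N_j\in\Z_+$ and frequencies $\xi_j\in\R^m\setminus\{0\}$ with $N_j\to\infty$, $|\xi_j|>j^j(j+1)!^s$, and
\begin{align*}
|\widehat{\tilde u}(\xi_j)| > M_j^{N_j+1} N_j!^{s}\,|\xi_j|^{-N_j},
\end{align*}
for a strictly increasing sequence $M_j$.

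Next I would construct $Q$. Since $Q$ must be of type $\{p!^s\}$ and strongly elliptic, I take $Q(\zeta)=\prod_{p=1}^\infty(1-\langle\zeta\rangle^2/(p^{2s}\sigma(p)))$ for a non-decreasing $\sigma:(0,\infty)\to(0,\infty)$ with $\sigma(x)\to\infty$; by Lemma~\ref{Operadoresdeordeminfinitaexistem} this is automatically an ultradifferential operator of type $\{p!^s\}$ satisfying $(\mathfrak e)$, so the only freedom left is the choice of $\sigma$, which controls the \emph{lower} bound $|Q(i\xi)|\gtrsim \exp\big((\log 2)|\xi|^{1/s}/\sigma(|\xi|^{1/s}+1)\big)$ for large $|\xi|$, as in \eqref{estimate for Q}. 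The idea is to make $\sigma$ grow \emph{slowly enough along the special frequencies $\xi_j$} so that $|Q(i\xi_j)|$ does not kill the large values $|\widehat{\tilde u}(\xi_j)|$ — concretely, I want $|Q(i\xi_j)\,\widehat{\tilde u}(\xi_j)|$ to still grow faster than any polynomial in $|\xi_j|$, which prevents $Q\tilde u$ from being a distribution on $V$. At the same time $Q$, being of type $\{p!^s\}$, maps $G^s_c$ continuously to itself, hence its transpose acts continuously on $\D'_s$, so $Q\tilde u\in\D'_s(U)$ automatically and $Q u=Q\tilde u$ on $V$ since $Q$ is local and $\chi\equiv1$ near $\overline V$.

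The core of the argument is therefore the quantitative bookkeeping: choosing $\sigma$ (say, interpolating a slowly increasing step function pinned down by the data $(N_j,\xi_j,M_j)$) so that, for each $j$,
\begin{align*}
|Q(i\xi_j)| \geq c\,|\xi_j|^{\,N_j/2}\quad\text{(or some comparable polynomial lower bound growing with }j\text{),}
\end{align*}
while $\sigma$ remains non-decreasing and tends to $\infty$. Combined with the inequality for $|\widehat{\tilde u}(\xi_j)|$ and the growth $|\xi_j|>j^j(j+1)!^s$, this forces $|\widehat{Q\tilde u}(\xi_j)| = |Q(i\xi_j)||\widehat{\tilde u}(\xi_j)|$ to exceed $C_N|\xi_j|^{N}$ for every fixed $N$ once $j$ is large — and by the Paley--Wiener--Schwartz characterization, a compactly supported distribution has polynomially bounded Fourier transform, so $Q\tilde u$ cannot be in $\E'(V)$; since it is in $\E'_s(V)$ with compact support (being $Q$ applied to a compactly supported function), it lies in $\E'_s(V)\setminus\E'(V)$, and restricting gives $Qu\in\D'_s(V)\setminus\D'(V)$.

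I expect the main obstacle to be the explicit construction of $\sigma$: it must be non-decreasing and unbounded (so that Lemma~\ref{Operadoresdeordeminfinitaexistem} applies and $Q$ is genuinely of type $\{p!^s\}$), yet grow slowly precisely at the scales $|\xi_j|^{1/s}$ so that the lower bound on $|Q(i\xi_j)|$ beats the decay forced on $Q$ by the Roumieu condition along other frequencies. The delicate point is that the Roumieu estimate \eqref{OpRoumieu2} is a \emph{global} upper bound $|Q(\zeta)|\le C_\delta e^{\delta|\zeta|^{1/s}}$ for every $\delta$, which constrains how large $|Q(i\xi_j)|$ can be; the resolution is that $|\widehat{\tilde u}(\xi_j)|$ is merely super-polynomial (not super-exponential of order $1/s$), so there is enough room to absorb it into the symbol while keeping $Q$ of type $\{p!^s\}$ — making this trade-off precise, by tracking constants exactly as in the proof of Theorem~\ref{prop a3}, is the technical heart of the proof.
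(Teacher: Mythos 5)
Your high-level plan matches the paper's: localize near a point of $G^s$-failure, cut off, extract a frequency sequence along which $\widehat{\chi u}$ is ``too big'', build $Q$ from this data, and conclude via Paley--Wiener that $Q(\chi u)$ cannot be a distribution, then use locality of $Q$. But the construction of $Q$ is where you diverge from the paper, and as written there is a genuine gap in your proposal.

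First, the paper does \emph{not} use the Hadamard-product symbol of Lemma~\ref{Operadoresdeordeminfinitaexistem}. It builds $Q(\zeta)=\sum_{k}(-1)^{k}a_{2k}\langle\zeta\rangle^{2k}$ by choosing each coefficient $a_{2k}$ individually as a function of the data $(M_j,N_j,\xi_j)$, with $a_{2k}\approx (M_j^{N_j}j^j j!^s N_j!^s)^{-1}$ when $2k\approx N_j+j$ and $a_{2k}=0$ otherwise. This is crucial: the Hadamard form's pointwise behavior is globally pinned down by the single function $\sigma$ (its Taylor coefficients come out of the Vieta expansion and cannot be tuned independently), whereas the power-series form lets one hit each special frequency $\xi_j$ exactly where needed. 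Also, the theorem makes no ellipticity claim on $Q$, so forcing $(\mathfrak{e})$ via the product form is an unnecessary constraint.

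Second, the claimed target bound $|Q(i\xi_j)|\ge c|\xi_j|^{N_j/2}$ is \emph{too weak} to finish the argument. With $|\widehat{\chi u}(\xi_j)|>M_j^{N_j+1}N_j!^s|\xi_j|^{-N_j}$ you get
\begin{align*}
|Q(i\xi_j)\,\widehat{\chi u}(\xi_j)|\ \gtrsim\ M_j^{N_j+1}N_j!^s\,|\xi_j|^{-N_j/2},
\end{align*}
which in fact tends to $0$ (boundedness of $\widehat{\chi u}$ forces $M_j^{N_j+1}N_j!^s \le C|\xi_j|^{N_j}$, so the above is $\lesssim |\xi_j|^{N_j/2}\cdot|\xi_j|^{-N_j/2}\cdot$ bounded). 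You actually need $|Q(i\xi_j)|$ to supply roughly a factor $|\xi_j|^{N_j+j}$, \emph{divided} by the price $M_j^{N_j}j^j j!^s N_j!^s$, to land on something like $|\xi_j|^j/(j^j\,j!^s)$, which then beats every polynomial because $|\xi_j|\ge j^j(j+1)!^s$.

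Third, you correctly flag that the Roumieu bound $|Q(i\xi)|\le C_\delta e^{\delta|\xi|^{1/s}}$ seems to conflict with a lower bound as large as $|\xi_j|^{N_j+j}$ -- when $N_j\sim|\xi_j|^{1/s}$ this would be $\exp(|\xi_j|^{1/s}\log|\xi_j|)$, far beyond the allowed $e^{\delta|\xi_j|^{1/s}}$. The paper resolves this precisely by the factor $M_j^{N_j}$ sitting in the denominator of $a_{2k}$: because $M_j\to\infty$ was built into the preliminary extraction, $(e/M_j)^{N_j}\to 0$ absorbs the would-be excess and makes the $a_{2k}$ satisfy the Roumieu bound while $Q(i\xi_j)\ge a_{2k}|\xi_j|^{2k}$ is still large enough. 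The Hadamard-product construction has no analogous knob -- $\sigma$ enters multiplicatively into every frequency's factor and cannot concentrate a divisor only at the exceptional $\xi_j$'s. So simply choosing $\sigma$ ``slow at $|\xi_j|^{1/s}$, yet nondecreasing and unbounded'' will not produce the required inequality, and the ``technical heart'' you defer is where the argument actually breaks down.

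A salvage is conceivable: extract the frequency sequence not from the raw inequality $|\widehat{\chi u}(\xi_M)|>M^{N_M+1}N_M!^s|\xi_M|^{-N_M}$ but directly from the Fourier characterization of $G^s_c$ (for every $\epsilon>0$ and every $N$, $\sup_\xi|\widehat{\chi u}(\xi)|e^{\epsilon|\xi|^{1/s}}|\xi|^{-N}=\infty$), diagonalize to get a single sequence $\xi_n$ with $|\widehat{\chi u}(\xi_n)|e^{|\xi_n|^{1/s}/n}|\xi_n|^{-n}\ge n$, and then tune $\sigma$ so that the Hadamard lower bound $|Q(i\xi_n)|\gtrsim\exp\big((\log 2)|\xi_n|^{1/s}/\sigma(|\xi_n|^{1/s}+1)\big)$ beats $e^{|\xi_n|^{1/s}/n}$ -- this constrains $\sigma$ to grow no faster than roughly $n$ at scale $|\xi_n|^{1/s}$, which is compatible with $\sigma\to\infty$. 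But this is a materially different argument than what you wrote, and the intermediate target $|\xi_j|^{N_j/2}$ must go.
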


\begin{proof}
	Since $u\in\Cinf(U) \setminus G^{s}(U)$, there exists $\chi\in G^{s}_c(U)$   for which $\chi u\in \Cinf_{c}(U) \setminus G^{s}_{c}(U)$. By the previous arguments, there  sequences $\left\{M_j \right\}_{j \in \N}, \left\{N_j \right\}_{j \in \N} \subset \Z_+$ and $\left\{\xi_j \right\}_{j \in \N}\ \subset  \R^{m}$  such that 
	\begin{itemize}[leftmargin=*]
		\item $\left\{M_j \right\}_{j \in \N}, \left\{N_j\right\}_{j \in \N}$ are strictly increasing  and $\lim M_{j} = \lim N_{j} = + \infty$;
		\item $|\xi_j|> j^j (j+1)!^{s}$, for each $j \in \Z_{+}$;
		\item $|\hat{u}(\xi_{j})|> M_j^{N_j+1} N_j!^{s} |\xi_{j}|^{-N_j}$, for each $j \in \Z_{+}$.
	\end{itemize}

	Next we construct an infinite order operator whose symbol is given by
	\begin{align*}
		Q(\zeta)= \sum_{k\in \Z_+} (-1)^{k}a_{2k} \langle \zeta\rangle^{2k},
	\end{align*}
	where each $a_{2k}\geq 0$ will be chosen as it follows. Note that, for each $\xi \in \R^{m}$, 
	\begin{align*}
		|  \widehat{Q(\del_x)(\chi u)}(\xi)|&= |  Q(\del_x)(\chi u)_x(e^{-ix\xi})|\\
		&= \Big|  \int_{\R^m}\chi(x) u(x) Q(i\xi) (e^{-ix\xi}) dx\Big|\\
		&= | Q(i\xi)| \Big|  \int_{\R^m}\chi(x) u(x) e^{-ix\xi} dx\Big|\\
		&=  | Q(i\xi)||\widehat{\chi u}(\xi)|.
	\end{align*}
Since it is being assumed that $a_{2k} \geq 0$, we have
	\begin{align*}
		Q(i\xi)= \sum_{k\in \Z_+} (-1)^{k} a_{2k} \langle i\xi \rangle^{2k} = \sum_{k\in \Z_+}  a_{2k} |\xi|^{2k} \geq a_{2\ell} |\xi|^{2\ell} , \ \ \forall \ell \in \Z_{+}, \ \forall \xi \in \R^{m}.
	\end{align*}
Combining this inequality  with \eqref{desigualdadedoUpSideDown}, we deduce that
	\begin{equation} \label{Fourier coefficients of Qu}
		| Q(i  \xi_{j})||\widehat{\chi u}(\xi_{j})|          > a_{2k} |\xi_{j}|^{2k}  M_{j}^{N_{j}+1} N_{j}!^{s} |\xi_{j}|^{-N_{j}}, \ \ \forall j \in \Z_{+}.
	\end{equation}
	For every $j\in \Z_+$, define
	\begin{align*}
		\tau_j= \left\{
		\begin{array}{cc}
			N_j+j& \textrm{ if } N_j+j \textrm{ is even};\\
			N_j+j+1& \textrm{ if } N_j+j \textrm{ is odd}.\\
		\end{array} \right.
	\end{align*}
Moreover, let $a_0=1$ and
	\begin{align*}
		a_{2k}=\left\{
		\begin{array}{cl}
			(M_j^{N_{j}} j^{j}j!^{s}  N_j!^{s})^{-1 },    &\textrm{ if } 2k= N_j+j \textrm{ for some } j;\\
			(M_j^{N_{j}+1} j^j (j+1)!^{s} N_j!^{s})^{-1 },    &\textrm{ if } 2k= N_j+j+1 \textrm{ for some } j;\\
			0, & \textrm{ otherwise.}
		\end{array}\right.
	\end{align*}

Given $\epsilon>0$, consider $M_\epsilon\in \Z_+$ the first element $M_{j_{0}}$ of $\left\{M_{j}\right\}_{j \in \N}$ which satisfies the inequality $M_{j_{0}} \geq \displaystyle \frac{2^{s}}{\epsilon}$. If $\ell$  is such that $2k= \tau_\ell$, $M_{\ell}  \geq M_\epsilon$ and $\ell \geq M_\epsilon$, it follows that either
	\begin{align*}
		a_{2k} =  \frac{1}{(M_\ell^{N_{\ell}} \ell^{\ell} \ell!^{s}  N_\ell!^{s})} \leq \frac{(\epsilon/2^{s})^{\ell + N_{\ell}}}{(\ell!^{s}  N_\ell!^{s})} = \frac{(\epsilon/2^{s})^{\ell + N_{\ell}}}{(\ell + N_\ell)!^{s}} \left(\frac{(\ell + N_\ell)!}{\ell!  N_\ell!}\right)^{s} \leq  \frac{\epsilon^{2 k}}{(2k)!^{s}} 
	\end{align*}
or
\begin{align*}
	a_{2k} =  \frac{1}{(M_\ell^{N_{\ell} + 1} \ell^{\ell} (\ell+1)!^{s}  N_\ell!^{s})} \leq \frac{(\epsilon/2^s)^{\ell + N_{\ell} + 1}}{((\ell+1)!^{s}  N_\ell!^{s})}  \leq \frac{\epsilon^{\ell + N_{\ell} + 1}}{(\ell + N_\ell + 1)!^{s}} \leq  \frac{\epsilon^{2 k}}{(2k)!^{s}}.
\end{align*}

	Since $a_k=0$ whenever  $a_{k} \notin \left\{\tau_j\right\}_{j \in \Z_{+}}$ and there exists a finite number of positive integers for which $M_j\leq M_\epsilon$, for every $\epsilon > 0$ it is possible to find $C_\epsilon$ such that
	\begin{align*}
		a_{2k}  &\leq C_\epsilon \frac{\epsilon^{ 2k}}{(2k)!^{s}}, \quad \forall k \in \Z_+.
	\end{align*}
	This implies that $Q$ is of class $\{p!^s\}$. 

	It remains then to analyze how $Q$ acts on $u$. It is a consequence of \eqref{Fourier coefficients of Qu} that
	\begin{align*}
		|  \widehat{Q(\del_x)(\chi u)}(\xi_{j})| & > a_{2k}   M_j^{N_{j}+1} N_{j}!^{s} |\xi_{j}|^{2k-N_{j}};
	\end{align*}
consider now $2k$ being equal $N_{j} + j$ or $N_{j} + j + 1$, depending on the parity of the term $N_{j}+j$. Then either 
\begin{align*}
	|  \widehat{Q(\del_x)(\chi u)}(\xi_{j})| & > (M_j^{N_{j}} j^{j}j!^{s}  N_j!^{s})^{-1 }  M_j^{N_{j}+1} N_{j}!^{s} |\xi_{j}|^{2k-N_{j}} \\
	&> \frac{M_{j}|\xi_{j}|^{j}}{j^{j} j!^{s}} 
\end{align*}
or
\begin{align*}
	|  \widehat{Q(\del_x)(\chi u)}(\xi_{j})| & > 	(M_j^{N_{j}+1} j^j (j+1)!^{s} N_j!^{s})^{-1 }  M_j^{N_{j}+1} N_{j}!^{s} |\xi_{j}|^{2k-N_{j}} \\
	&> \frac{|\xi_{j}|^{j+1}}{j^{j} (j+1)!^{s}}.
\end{align*}
Using both cases, we are able to infer that
\begin{equation} \label{asymptotic growth Qu}
	|  \widehat{Q(\del_x)(\chi u)}(\xi_{j})| \geq \frac{|\xi_{j}|^{j}}{j^{j} (j+1)!^{s}}, \ \ \ \forall j \in \Z_{+}.
\end{equation}

	Suppose for a moment that $Q(\del_x)(\chi u)$ is a distribution with compact support. Then there exist $N_0\in \Z_+$ and $C_{N_0}>0$ such that
	\begin{align*}
		|  \widehat{Q(\del_x)(\chi u)}(\xi)|\leq C_{N_0}(1+|\xi|)^{N_0-1}.
	\end{align*}
	This inequality, associated to \eqref{asymptotic growth Qu}, leads us to conclude that
	\begin{align*}
		\frac{|\xi_{j}|^{j}}{j^{j}(j+1)!^{s}}  < C_{N_0}(1+|\xi_{j}|)^{N_0-1}, \ \ \forall j \in \Z_{+}.
	\end{align*}
Since we have from the beginning that $|\xi_j|\geq j^j (j+1)!^{s}$, it follows that
	\begin{align*}
		1 <  C_{N_0} |\xi_{j}|^{-j-1}(1+|\xi_{j}|)^{N_0-1}, \ \ \forall j \in \Z_{+}.
	\end{align*}
However the right-hand side goes to zero when $j\lra \infty$, giving us a contradiction.  Therefore $Q(\del_x)(\chi u)$ is not a distribution, as we intended to show. 
\end{proof}

\begin{Obs}
	One can also adapt Proposition~\ref{PropA1} to hyperfunctions as follows:
	\begin{Pro}\label{PropB1}
		For every $u\in \Cinf(U) \setminus \Com(U)$ there exists $Q$ an infinite order differential operator of class $\{p!\}$ such that $Qu \in \mathcal{B}(V) \setminus \D'(V)$ in some $V\subset U$ open.  
	\end{Pro}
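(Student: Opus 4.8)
The plan is to run the proof of Theorem~\ref{PropA1} with the Gevrey order $s$ replaced by $1$, the only structural change being that the Fourier characterization of the class $G^{s}_{c}$ gets replaced by the corresponding characterization of real-analyticity: a function $v\in\Cinf_{c}(U)$ extends real-analytically (equivalently, since it is compactly supported, vanishes identically) if and only if there is a single constant $C_{v}>0$ with $|\hat v(\xi)|\leq C_{v}^{\,N+1}N!\,|\xi|^{-N}$ for every $N\in\Z_{+}$ and every $\xi\neq 0$ (equivalently, $|\hat v(\xi)|\leq C e^{-c|\xi|}$ for some $C,c>0$). Negating this is precisely the input needed in the argument preceding Theorem~\ref{PropA1}.

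Concretely, I would first fix $x_{0}\in U$ at which $u$ fails to be real-analytic and a cutoff $\chi\in\Cinf_{c}(U)$ with $\chi\equiv 1$ on a neighborhood of $x_{0}$; then $\chi u\in\Cinf_{c}(U)$ agrees with $u$ near $x_{0}$, so it is nonzero and not real-analytic, hence for every constant $M$ there are $N_{M}\in\Z_{+}$ and $\xi_{M}\in\Rm\setminus\{0\}$ with $|\widehat{\chi u}(\xi_{M})|>M^{N_{M}+1}N_{M}!\,|\xi_{M}|^{-N_{M}}$. Exactly as in the discussion preceding Theorem~\ref{PropA1}, one extracts strictly increasing $M_{j},N_{j}\in\Z_{+}$ with $M_{j},N_{j}\to\infty$ and frequencies $\xi_{j}$ with $|\xi_{j}|>j^{j}(j+1)!$ still satisfying the above inequality (the step showing $N_{j}$ may be taken unbounded uses only that $\chi u\in\Cinf_{c}$, hence is rapidly decreasing, and is unaffected). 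Then I would build the symbol $Q(\zeta)=\sum_{k\in\Z_{+}}(-1)^{k}a_{2k}\langle\zeta\rangle^{2k}$ with nonnegative $a_{2k}$ chosen by the same recipe as in the proof of Theorem~\ref{PropA1} (with $s=1$): supported on even integers near $N_{j}+j$, and of size so that on one hand $a_{2k}\leq C_{\epsilon}\epsilon^{2k}/(2k)!$ for every $\epsilon>0$ — so that $Q$ is an operator of type $\{p!\}$ — and on the other hand, using $Q(i\xi)\geq a_{2\ell}|\xi|^{2\ell}$ for every $\ell$ together with the lower bounds on $|\widehat{\chi u}(\xi_{j})|$ and $|\xi_{j}|$, the Fourier--Borel transform $\widehat{Q(\chi u)}(\xi)=Q(i\xi)\widehat{\chi u}(\xi)$ satisfies $|\widehat{Q(\chi u)}(\xi_{j})|\geq |\xi_{j}|^{\,j-1}$, which grows faster than any polynomial. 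Since $Q(\chi u)$ has compact support this forces $Q(\chi u)\notin\D'$, and since $Q$ acts on $\mathcal{B}(U)$ and is local one concludes, exactly as in Theorem~\ref{PropA1}, that $Qu\in\mathcal{B}(V)\setminus\D'(V)$ for a suitable open $V\subset U$ (the interior of $\{\chi\equiv 1\}$); note that $Qu\in\mathcal{B}(V)$ is automatic because $\Cinf(U)\subset\mathcal{B}(U)$ and $Q$ preserves hyperfunctions.

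The main obstacle is the same delicate point as in Theorem~\ref{PropA1}: one must choose the coefficients $a_{2k}$ and the frequencies $\xi_{j}$ with enough care that the super-polynomial growth of $\widehat{Q(\chi u)}$ survives the near-cancellation between the (arbitrarily slow) exponential-type growth permitted for $Q(i\xi)$ and the correspondingly slow decay of $\widehat{\chi u}$ along the sequence $\xi_{j}$ — this is what forces both the requirement $|\xi_{j}|>j^{j}(j+1)!$ and the precise two-index bookkeeping involving $N_{j}$. Everything else is formally identical to the Gevrey case, and in particular the strong ellipticity condition $(\mathfrak{e})$ plays no role here.
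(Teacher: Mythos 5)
Your proposal does essentially what the paper's own remark prescribes: take $\chi u\in\Cinf_c(U)$, pass to the object with compact support, and redo the construction from Theorem~\ref{PropA1} with $s=1$. The paper frames this using the analytic functional $\lambda_{\chi u}$, but since the Fourier--Borel transform of $\lambda_{\chi u}$ is exactly the ordinary Fourier transform $\widehat{\chi u}$ (extended holomorphically), your computation is literally the same one. Your observations that the Paley--Wiener characterization replaces the Gevrey one, that the unboundedness of $N_j$ only uses rapid decay of $\widehat{\chi u}$, and that condition $(\mathfrak{e})$ is irrelevant here, are all correct.

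There is, however, a point you gloss over (and which the paper's remark also glosses over) and which is genuinely more delicate in the analytic case than in the Gevrey one. In Theorem~\ref{PropA1} the cutoff $\chi$ lives in $G^s_c$, so the failure of the Gevrey--Fourier estimate for $\widehat{\chi u}$ is at least morally tied to $u$ not being of class $G^s$. In the analytic case no nonzero compactly supported real-analytic cutoff exists, so you must take $\chi\in\Cinf_c$, and then \emph{any} nonzero $\chi u\in\Cinf_c$ fails the exponential Fourier decay --- irrespective of whether $u$ is real-analytic near $x_0$. The construction therefore yields the global statement $Q(\chi u)\notin\E'(\R^m)$, but passing to your claimed conclusion $Qu|_V\notin\D'(V)$ with $V=\{\chi\equiv 1\}^{\circ}$ requires knowing that the non-distributional behavior of $Q(\chi u)$ is not concentrated on $\supp \dd\chi$, which is precisely where $Q(\chi u)\neq Qu$. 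This is not automatic: $\chi u$ fails to be real-analytic on $\supp\dd\chi$ because of the cutoff alone, and indeed if $u$ happened to be real-analytic on $\{\chi\equiv 1\}$ then $Qu$ would be real-analytic there, forcing the singular behavior of $Q(\chi u)$ onto $\supp\dd\chi$. So the step from ``$Q(\chi u)\notin\E'$'' to ``$Qu|_V\notin\D'(V)$'' is where the hypothesis $u\in\Cinf(U)\setminus\Com(U)$ ought to enter, and neither your write-up nor the paper's sketch supplies the missing localization. (Theorem~\ref{PropA1}'s own proof has the same issue --- it ends with ``$Q(\partial_x)(\chi u)$ is not a distribution'' --- but there the cutoff at least lies in the relevant Gevrey class.) You should flag this and either supply a localization argument or restate the conclusion in a form that does not require one.
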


	The key point is that when we take $\chi u \in \Cinf_c(U)$ and associate  the analytic functional
	\begin{align*}
		\lambda_{\chi u}(h)=\int_{\R^m} \chi(x) u(x) h(x) \dd x, \quad \forall h \in \mathcal{O}(C^m),
	\end{align*}
	we can follow the proof of Proposition~\ref{PropA1} to construct a hyperdifferential operator $Q$ such that analytic functional $Q(\del_x)  \lambda_{\chi u}$ does not represent a distribution.
\end{Obs}

\bibliographystyle{plain}
\bibliography{bibliography}

\begin{thebibliography}{10}

\bibitem{ara}
G.~Ara\'ujo.
\newblock Regularity and solvability of linear differential operators in
  {G}evrey spaces.
\newblock {\em Math. Nachr.}, 291(5-6):729--758, 2018.

\bibitem{bg}
M.~S. Baouendi and C.~Goulaouic.
\newblock Nonanalytic-hypoellipticity for some degenerate elliptic operators.
\newblock {\em Bull. Amer. Math. Soc.}, 78:483--486, 1972.

\bibitem{batr2}
M.~S. Baouendi and F.~Tr\`eves.
\newblock A property of the functions and distributions annihilated by a
  locally integrable system of complex vector fields.
\newblock {\em Ann. of Math. (2)}, 113(2):387--421, 1981.

\bibitem{batr}
M.~S. Baouendi and F.~Tr\`eves.
\newblock A microlocal version of {B}ochner's tube theorem.
\newblock {\em Indiana Univ. Math. J.}, 31(6):885--895, 1982.

\bibitem{bch}
Shiferaw Berhanu, Paulo~D. Cordaro, and Jorge Hounie.
\newblock An introduction to involutive structures.
\newblock 6:xii+392, 2008.

\bibitem{bon}
J.~M. Bony.
\newblock Extensions du th\'{e}or\`eme de {H}olmgren.
\newblock In {\em S\'{e}minaire {G}oulaouic-{S}chwartz 1975/1976: \'{E}quations
  aux d\'{e}riv\'{e}es partielles et analyse fonctionnelle}, pages Exp.No. 17,
  13. \'{E}cole Polytech., Palaiseau, 1976.

\bibitem{bm}
Antonio Bove and Marco Mughetti.
\newblock Minimal {G}evrey regularity for {H}\"{o}rmander operators.
\newblock {\em Int. Math. Res. Not. IMRN}, (4):2790--2832, 2024.

\bibitem{bt}
Antonio Bove and David Tartakoff.
\newblock Optimal non-isotropic {G}evrey exponents for sums of squares of
  vector fields.
\newblock {\em Comm. Partial Differential Equations}, 22(7-8):1263--1282, 1997.

\bibitem{bra}
Marco Bramanti.
\newblock {\em An invitation to hypoelliptic operators and {H}\"{o}rmander's
  vector fields}.
\newblock SpringerBriefs in Mathematics. Springer, Cham, 2014.

\bibitem{cc}
Paulo A.~S. Caetano and Paulo~D. Cordaro.
\newblock Gevrey solvability and {G}evrey regularity in differential complexes
  associated to locally integrable structures.
\newblock {\em Trans. Amer. Math. Soc.}, 363(1):185--201, 2011.

\bibitem{ccp}
Jairo~E. Castellanos, Paulo~D. Cordaro, and Gerson Petronilho.
\newblock Gevrey vectors in involutive tube structures and {G}evrey regularity
  for the solutions of certain classes of semilinear systems.
\newblock {\em J. Anal. Math.}, 119:333--364, 2013.

\bibitem{clx}
Hua Chen, Wei-Xi Li, and Chao-Jiang Xu.
\newblock Gevrey hypoellipticity for linear and non-linear {F}okker-{P}lanck
  equations.
\newblock {\em J. Differential Equations}, 246(1):320--339, 2009.

\bibitem{c}
Michael Christ.
\newblock Intermediate optimal {G}evrey exponents occur.
\newblock {\em Comm. Partial Differential Equations}, 22(3-4):359--379, 1997.

\bibitem{cor}
P.~D. Cordaro.
\newblock Representation of hyperfunction solutions in a hypo-analytic
  structure.
\newblock {\em Math. Z.}, 233(4):633--654, 2000.

\bibitem{ch1}
P.~D. Cordaro and N.~Hanges.
\newblock Hyperfunctions and (analytic) hypoellipticity.
\newblock {\em Math. Ann.}, 344:329--339, 2009.

\bibitem{ch2}
P.~D. Cordaro and N.~Hanges.
\newblock Hypoellipticity in spaces of ultradistributions -- study of a model
  case.
\newblock {\em Israel J. Math.}, 191:771--789, 2012.

\bibitem{ct}
Paulo~D. Cordaro and Jean-Marie Tr\'{e}preau.
\newblock On the solvability of linear partial differential equations in spaces
  of hyperfunctions.
\newblock {\em Ark. Mat.}, 36(1):41--71, 1998.

\bibitem{ct2}
Paulo~D. Cordaro and Fran\c{c}ois Tr\`eves.
\newblock {\em Hyperfunctions on hypo-analytic manifolds}, volume 136 of {\em
  Annals of Mathematics Studies}.
\newblock Princeton University Press, Princeton, NJ, 1994.

\bibitem{dz}
M.~Derridj and C.~Zuily.
\newblock R\'{e}gularit\'{e} analytique et {G}evrey pour des classes
  d'op\'{e}rateurs elliptiques paraboliques d\'{e}g\'{e}n\'{e}r\'{e}s du second
  ordre.
\newblock In {\em Colloque {I}nternational {CNRS} sur les \'{E}quations aux
  {D}\'{e}riv\'{e}es {P}artielles {L}in\'{e}aires ({U}niv. {P}aris-{S}ud,
  {O}rsay, 1972)}, volume~2 of {\em Ast\'{e}risque}, pages 371--381. Soc. Math.
  France, Paris, 1973.

\bibitem{gz}
Jeffrey Galkowski and Maciej Zworski.
\newblock Analytic hypoellipticity of {K}eldysh operators.
\newblock {\em Proc. Lond. Math. Soc. (3)}, 123(5):498--516, 2021.

\bibitem{hn}
Fr\'{e}d\'{e}ric H\'{e}rau and Francis Nier.
\newblock Isotropic hypoellipticity and trend to equilibrium for the
  {F}okker-{P}lanck equation with a high-degree potential.
\newblock {\em Arch. Ration. Mech. Anal.}, 171(2):151--218, 2004.

\bibitem{hr}
Gustavo Hoepfner and Luis~F. Ragognette.
\newblock A new microlocal analysis of hyperfunctions.
\newblock {\em J. Funct. Anal.}, 281(4):Paper No. 109065, 49, 2021.

\bibitem{hor}
Lars H\"{o}rmander.
\newblock {\em Linear partial differential operators}.
\newblock Die Grundlehren der mathematischen Wissenschaften, Band 116.
  Springer-Verlag, Berlin-G\"{o}ttingen-Heidelberg; Academic Press, Inc.,
  Publishers, New York, 1963.

\bibitem{hor2}
Lars H\"{o}rmander.
\newblock Hypoelliptic second order differential equations.
\newblock {\em Acta Math.}, 119:147--171, 1967.

\bibitem{hor3}
Lars H\"{o}rmander.
\newblock {\em The analysis of linear partial differential operators. {III}}.
\newblock Classics in Mathematics. Springer, Berlin, 2007.
\newblock Pseudo-differential operators, Reprint of the 1994 edition.

\bibitem{Kan}
Akira Kaneko.
\newblock Representation of hyperfunctions by measures and some of its
  applications.
\newblock {\em J. Fac. Sci. Univ. Tokyo Sect. IA Math.}, 19:321--352, 1972.

\bibitem{k2}
J.~J. Kohn.
\newblock Subelliptic estimates.
\newblock In {\em Harmonic analysis in {E}uclidean spaces ({P}roc. {S}ympos.
  {P}ure {M}ath., {W}illiams {C}oll., {W}illiamstown, {M}ass., 1978), {P}art
  2}, Proc. Sympos. Pure Math., XXXV, Part 2, pages 143--152. Amer. Math. Soc.,
  Providence, RI, 1979.

\bibitem{k3}
J.~J. Kohn.
\newblock Subellipticity of the {$\bar \partial $}-{N}eumann problem on
  pseudo-convex domains: sufficient conditions.
\newblock {\em Acta Math.}, 142(1-2):79--122, 1979.

\bibitem{k}
J.~J. Kohn.
\newblock Hypoellipticity and loss of derivatives.
\newblock {\em Ann. of Math. (2)}, 162(2):943--986, 2005.
\newblock With an appendix by Makhlouf Derridj and David S. Tartakoff.

\bibitem{kom}
Hikosaburo Komatsu.
\newblock Ultradistributions. {I}. {S}tructure theorems and a characterization.
\newblock {\em J. Fac. Sci. Univ. Tokyo Sect. IA Math.}, 20:25--105, 1973.

\bibitem{kom2}
Hikosaburo Komatsu.
\newblock Linear ordinary differential equations with {G}evrey coefficients.
\newblock {\em J. Differential Equations}, 45(2):272--306, 1982.

\bibitem{kot}
G.~K{\"o}the.
\newblock {\em Topological vector spaces. {II}}, volume 237 of {\em Grundlehren
  der Mathematischen Wissenschaften [Fundamental Principles of Mathematical
  Science]}.
\newblock Springer-Verlag, New York-Berlin, 1979.

\bibitem{l}
Hans Lewy.
\newblock An example of a smooth linear partial differential equation without
  solution.
\newblock {\em Ann. of Math. (2)}, 66:155--158, 1957.

\bibitem{mai}
H.-M. Maire.
\newblock Hypoelliptic overdetermined systems of partial differential
  equations.
\newblock {\em Comm. Partial Differential Equations}, 5(4):331--380, 1980.

\bibitem{mal}
Paul Malliavin.
\newblock Stochastic calculus of variation and hypoelliptic operators.
\newblock In {\em Proceedings of the {I}nternational {S}ymposium on
  {S}tochastic {D}ifferential {E}quations ({R}es. {I}nst. {M}ath. {S}ci.,
  {K}yoto {U}niv., {K}yoto, 1976)}, Wiley-Intersci. Publ., pages 195--263. John
  Wiley \& Sons, New York-Chichester-Brisbane, 1978.

\bibitem{met}
Guy M\'{e}tivier.
\newblock Une classe d'op\'{e}rateurs non hypoelliptiques analytiques.
\newblock {\em Indiana Univ. Math. J.}, 29(6):823--860, 1980.

\bibitem{miz}
Sigeru Mizohata.
\newblock Solutions nulles et solutions non analytiques.
\newblock {\em J. Math. Kyoto Univ.}, 1:271--302, 1961/62.

\bibitem{mor}
Mitsuo Morimoto.
\newblock {\em An introduction to {S}ato's hyperfunctions}, volume 129 of {\em
  Translations of Mathematical Monographs}.
\newblock American Mathematical Society, Providence, RI, 1993.
\newblock Translated and revised from the 1976 Japanese original by the author.

\bibitem{nac}
Leopoldo Nachbin.
\newblock Some aspects and problems in holomorphy.
\newblock {\em Extracta Math.}, 1(2):57--72, 1986.

\bibitem{nt1}
Louis Nirenberg and Fran\c{c}ois Tr\`eves.
\newblock On local solvability of linear partial differential equations. {I}.
  {N}ecessary conditions.
\newblock {\em Comm. Pure Appl. Math.}, 23:1--38, 1970.

\bibitem{nt2}
Louis Nirenberg and Fran\c{c}ois Tr\`eves.
\newblock On local solvability of linear partial differential equations. {II}.
  {S}ufficient conditions.
\newblock {\em Comm. Pure Appl. Math.}, 23:459--509, 1970.

\bibitem{oka}
Takashi Okaji.
\newblock Gevrey-hypoelliptic operators which are not
  {$C^\infty$}-hypoelliptic.
\newblock {\em J. Math. Kyoto Univ.}, 28(2):311--322, 1988.

\bibitem{ole}
O.~A. Oleinik.
\newblock On the analyticity of solutions of partial differential equations and
  systems.
\newblock In {\em Colloque {I}nternational {CNRS} sur les \'{E}quations aux
  {D}\'{e}riv\'{e}es {P}artielles {L}in\'{e}aires ({U}niv. {P}aris-{S}ud,
  {O}rsay, 1972)}, volume~2 of {\em Ast\'{e}risque}, pages 272--285. Soc. Math.
  France, Paris, 1973.

\bibitem{ragtese}
Luis~F. Ragognette.
\newblock {\em Operadores ultradiferenciais no estudo de resolubilidade e
  regularidade Gevrey}.
\newblock PhD thesis, Universidade de São Paulo.

\bibitem{rag}
Luis~F. Ragognette.
\newblock Ultradifferential operators in the study of {G}evrey solvability and
  regularity.
\newblock {\em Math. Nachr.}, 292(2):409--427, 2019.

\bibitem{rod}
L.~Rodino.
\newblock {\em Linear partial differential operators in {G}evrey spaces}.
\newblock World Scientific Publishing Co., Inc., River Edge, NJ, 1993.

\bibitem{rod2}
Luigi Rodino.
\newblock On the problem of the hypoellipticity of the linear partial
  differential equations.
\newblock In {\em Developments in partial differential equations and
  applications to mathematical physics ({F}errara, 1991)}, pages 157--178.
  Plenum, New York, 1992.

\bibitem{rs}
Linda~Preiss Rothschild and E.~M. Stein.
\newblock Hypoelliptic differential operators and nilpotent groups.
\newblock {\em Acta Math.}, 137(3-4):247--320, 1976.

\bibitem{sw}
H.~H. Schaefer and M.~P. Wolff.
\newblock {\em Topological vector spaces}, volume~3 of {\em Graduate Texts in
  Mathematics}.
\newblock Springer-Verlag, New York, second edition, 1999.

\bibitem{sch}
Pierre Schapira.
\newblock {\em Th\'{e}orie des hyperfonctions}.
\newblock Lecture Notes in Mathematics, Vol. 126. Springer-Verlag, Berlin-New
  York, 1970.

\bibitem{tre4}
Fran\c{c}ois Tr\`eves.
\newblock {\em Introduction to pseudodifferential and {F}ourier integral
  operators. {V}ol. 1}.
\newblock The University Series in Mathematics. Plenum Press, New York-London,
  1980.
\newblock Pseudodifferential operators.

\bibitem{tre3}
Fran\c{c}ois Tr\`eves.
\newblock {\em Hypo-analytic structures}, volume~40 of {\em Princeton
  Mathematical Series}.
\newblock Princeton University Press, Princeton, NJ, 1992.
\newblock Local theory.

\bibitem{tre2}
Fran\c{c}ois Tr\`eves.
\newblock {\em Topological vector spaces, distributions and kernels}.
\newblock Dover Publications, Inc., Mineola, NY, 2006.
\newblock Unabridged republication of the 1967 original.

\bibitem{tre}
Fran\c{c}ois Treves.
\newblock {\em Analytic partial differential equations}, volume 359 of {\em
  Grundlehren der mathematischen Wissenschaften [Fundamental Principles of
  Mathematical Sciences]}.
\newblock Springer, Cham, [2022].

\end{thebibliography}

\end{document}